\documentclass[ 11pt, reqno]{amsart}

\usepackage{amsmath,amssymb,amsxtra,epsf,amscd,graphics,color,array,ulem,etex, colortbl}
\usepackage{mathrsfs}
\usepackage[latin1]{inputenc}
\usepackage[T1]{fontenc}
\usepackage[frenchb, english]{babel}
\usepackage{amsfonts}
\usepackage{amsthm}
\usepackage{mleftright}
\usepackage{ytableau}
\usepackage{fancyhdr, fancybox}
\usepackage{enumerate,array,calc}
\usepackage{epsf}
\usepackage{epsfig}
\usepackage{amsfonts,amssymb,wasysym}

\usepackage{hhline}
\usepackage{tikz}
\usetikzlibrary{patterns}
\usepackage{hyperref}
\usepackage{caption}

\newcommand{\g}{\mathfrak{g}}
\newcommand{\ad}{\operatorname{ad}}

\def\ep{\varepsilon}

\def\p{\mathfrak p}

\def\h{\mathfrak h}

\def\a{\mathfrak a}
\def\m{\mathfrak m}
\def\b{\mathfrak b}
\def\n{\mathfrak n}

\def\q{\mathfrak q}
\def\ep{\varepsilon}

\theoremstyle{definition} 
\newtheorem{theorem}{Theorem}[subsection]
\newtheorem{thm}[theorem]{Theorem}

\newtheorem{prop}[theorem]{Proposition}
\newtheorem{lm}[theorem]{Lemma}

\newtheorem{defi}[theorem]{Definition}

\newtheorem{Rq}[theorem]{Remark}

\title[Polynomiality for some In\"on\"u-Wigner Contractions in type A.]{Polynomiality for algebras of invariants associated with some In\"on\"u-Wigner Contractions in type A.}

\author{Florence Fauquant-Millet}

\address{
Université Jean Monnet, Centrale Lyon, INSA Lyon, Universit\'e Lyon 1, CNRS, ICJ UMR5208, 42023 Saint-
Etienne, France.}
\email{florence.millet@univ-st-etienne.fr}

\begin{document}

\begin{abstract}

This paper deals with polynomiality of algebras of symmetric invariants (or generated by symmetric semi-invariants) associated with some particular In\"on\"u-Wigner contractions in type A. More precisely we are interested with contractions of some parabolic subalgebras in type A with respect to their decomposition into their Levi factor and their nilpotent radical, the latter becoming an abelian ideal of the contraction. When the parabolic subalgebra $\p$ has its Levi factor  decomposing into three symmetric blocks, with respect to the antidiagonal, we show in this article that the algebra of symmetric invariants associated with the contraction of the canonical truncation of $\p$  is a polynomial algebra, for which we can give the number of algebraically independent homogeneous generators, their weight and degree. The method relies on the construction of an adapted pair for such a contraction. As a by-product we obtain that the algebra generated by symmetric semi-invariants associated with the In\"on\"u-Wigner contraction of such a parabolic subalgebra $\p$ in type A has a Weierstrass section and then is a polynomial algebra, when the central block is not of the same size as the two extremal blocks of the Levi factor.

\end{abstract}

\maketitle

{\it Mathematics Subject Classification} : 16 W 22, 17 B 22, 17 B 35.

{\it Key words} : In\"on\" u-Wigner contraction,  parabolic subalgebra, symmetric invariants, semi-invariants, Weierstrass section, polynomiality.

\section{The context.}\label{Intro}

Throughout this paper, we assume that the base field $\Bbbk$ is algebraically closed of characteristic zero and $\a$ denotes any algebraic finite-dimensional Lie algebra over $\Bbbk$.  

\subsection{}\label{contr}

If $\a=\mathfrak b\oplus\mathfrak c$ with $\mathfrak c$ an ideal of $\a$ and $\mathfrak b$ a Lie subalgebra of $\a$, we may define the In\"on\"u-Wigner contraction (or one-parameter contraction) $\widetilde\a$ of $\a$ with respect to this decomposition to be the semi-direct product
$$\widetilde\a=\mathfrak b\ltimes\mathfrak c^a$$
where $\mathfrak c^a$ is an abelian ideal of $\a$ (see for instance \cite[Sec. 4]{Y1} and \cite[Remark 2.3]{Fe}).

As $\mathfrak b$-modules, $\a$ and $\widetilde\a$ are isomorphic (the latter may be viewed as a degeneration of the former).  Moreover $\widetilde\a$ is still a Lie algebra with the Lie bracket $[\,,\,]_{\widetilde\a}$ defined as follows.

\begin{align*}
\forall x,\,x'\in\mathfrak b,\,\forall y,\,y'\in\mathfrak c,\,[x,\,x']_{\widetilde\a}=[x,\,x'],\;\;[x,\,y]_{\widetilde\a}=[x,\,y],\;\;[y,\,y']_{\widetilde\a}=0\end{align*}
where $[\,,\,]$ is the Lie bracket in $\a$.
The superscript $a$ in $\mathfrak c^a$ means that $\mathfrak c^a$ is abelian in the contraction $\widetilde\a$.

\subsection{}\label{par}
Let $\g$ be a simple Lie algebra over $\Bbbk$, $\h$ be a Cartan subalgebra of $\g$ and $\pi$ be a set of simple roots of $(\g,\,\h)$.
Let $\p$ be a standard parabolic subalgebra  in $(\g,\,\h,\,\pi)$. Denote by $\mathfrak r$  the standard Levi factor of $\p$ and $\m$ the nilpotent radical of $\p$, so that we have $$\p=\mathfrak r\oplus\m.$$
Then $\mathfrak r$ is a reductive Lie subalgebra of $\p$ and $\m$ is an ideal of $\p$.
According to the notation in subsection \ref{contr}, we denote by $\widetilde\p=\mathfrak r\ltimes\m^a$ the In\"on\"u-Wigner contraction (or one-parameter contraction)  with respect to the above decomposition of $\p$. Then $\m^a$ becomes an abelian ideal of $\widetilde\p$.
 
\subsection{}\label{semi}
If $\b$ is a Lie subalgebra of $\a$, one denotes by $S(\a)^{\b}$ the algebra of symmetric invariants in the symmetric algebra $S(\a)$ of $\a$ under adjoint action of $\b$.
In particular, one sets $Y(\a)=S(\a)^{\a}$.  
An element $s\in S(\a)$ is called a semi-invariant of weight $\lambda\in\a^*$ if, for all $x\in\a$, we have that
$${\rm ad}\,x(s)=\lambda(x)s$$ where $\rm ad$ denotes the adjoint action of $\a$ on $S(\a)$ extending the Lie bracket in $\a$ by derivation. We denote by $S(\a)_{\lambda}$ the vector subspace of $S(\a)$ formed by all semi-invariant elements of weight $\lambda$. Then the algebra $Sy(\a)$ generated by symmetric semi-invariants in $S(\a)$ under adjoint action is given by the equality :
$$Sy(\a)=\bigoplus_{\lambda\in\a^*}S(\a)_{\lambda}.$$
The set of weights $\Lambda(\a)$ of $Sy(\a)$ is given by the equality :
 $$\Lambda(\a)=\{\lambda\in\a^*\mid S(\a)_{\lambda}\neq\{0\}\}.$$
Since the symmetric invariants of $S(\a)$ are semi-invariants of weight zero, we have obviously that $Y(\a)\subset Sy(\a)$. Denote by $\a'$ the derived subalgebra of $\a$. There exists a socalled canonical truncation $\a_{\Lambda}$ of $\a$ such that 
\begin{equation}Sy(\a)=Y(\a_{\Lambda})=Sy(\a_{\Lambda}).\label{semitrunc}\end{equation} The canonical truncation $\a_{\Lambda}$ of $\a$ is the largest ideal of $\a$ which vanishes on the set $\Lambda(\a)$ : it always contains $\a'$. In other words one has that
\begin{equation}\a_{\Lambda}=\cap_{\lambda\in\Lambda(\a)}\ker(\lambda).\label{deftrunc}\end{equation}

\subsection{}\label{reg}

For $\xi\in\a^*$, set $\a_{\xi}=\{x\in\a\mid\forall y\in\a,\; \xi([x,\,y])=0\}$, and denote by $${\rm index}\,\a=\inf_{\xi\in\a^*}\dim\a_{\xi}$$ the index of $\a$, as defined for example in \cite[1.11.6]{D}.
Since $\a$ is algebraic, the index of $\a$ is also equal to the minimal codimension of a coadjoint orbit.
For instance if $\a$ is abelian, then ${\rm index}\,\a=\dim\a.$
An element $\xi\in\a^*$ such that $\dim\a_{\xi}={\rm index}\,\a$ is called a regular element of $\a^*$. The subset of $\a^*$ formed by regular elements is a non-empty Zariski open subset of $\a^*$ by \cite[1.11.5]{D} for example.

\subsection{} \label{GKdim}

For every subalgebra $S\subset S(\a)$ denote by ${\rm GKdim}\, S$ the Gelfand-Kirillov dimension of $S$ which is also equal to the transcendence degree over $\Bbbk$ of the field of fractions of $S$.
As a consequence of a result of Chevalley-Dixmier (\cite[Lem. 7]{D0}) we have that 
\begin{equation}{\rm index}\,\a_{\Lambda}={\rm GKdim}\,Y(\a_{\Lambda})={\rm GKdim}\,Sy(\a).\label{GKdimindex}\end{equation}

\subsection{}\label{Carnot} 
By \cite[Rem. 2.4.1, Cor. 2.6.5]{F01}, if $\a$ is a maximal parabolic subalgebra $\p$ in  $(\g,\,\h,\,\pi)$, then $\p_{\Lambda}=\p'$ and $\widetilde\p_{\Lambda}=\widetilde\p'$. More generally for a parabolic subalgebra $\p$ in $(\g,\,\h,\,\pi)$, there exists a subspace $\h_{\Lambda}\subset\h$ (which is not the canonical truncation of $\h$, but which is equal to $\p_{\Lambda}\cap\mathfrak z(\mathfrak r)$ where $\mathfrak z(\mathfrak r)$ is the centre of the Levi factor $\mathfrak r$ of $\p$)  such that $\p_{\Lambda}=\mathfrak r'\oplus\h_{\Lambda}\oplus\m$ and $\h_{\Lambda}$ need not be reduced to $\{0\}$. For instance $\h_{\Lambda}\neq\{0\}$  if we consider, in type A, a parabolic subalgebra $\p$ whose Levi factor is formed by three symmetric blocks, with respect to the antidiagonal. 
When $\h_{\Lambda}\neq\{0\}$ it is not clear in general what should be the canonical truncation $\widetilde\p_{\Lambda}$ of the contraction $\widetilde\p$ and then what should be the algebra $Sy\bigl(\widetilde\p\bigr)$ generated by symmetric semi-invariants  in terms of algebra of symmetric invariants (in view of Eq. \ref{semitrunc}).

\subsection{}
In the notation of subsections \ref{par} and \ref{semi}, we continue in this article the study of polynomiality of the algebra $Sy\bigl(\widetilde\p\bigr)$ that we have initiated in our papers \cite{F00} and \cite{F01}.
 Since $Sy\bigl(\widetilde\p\bigr)=S\bigl(\widetilde\p\bigr)^{\widetilde\p'}$, we may observe that, if the algebra $Sy\bigl(\widetilde\p\bigr)$ is polynomial, then it is the same for the algebra of invariants $S(\m)^{\mathfrak r'}\subset Sy\bigl(\widetilde\p\bigr)$ by \cite[Thm. 2.3]{P}. By \cite[Cor. 7]{Kn}, when the $\mathfrak r'$-module $\m$ has no nonzero $\mathfrak r'$-invariant vectors and if $S(\m)^{\mathfrak r'}$ is polynomial, then we have that
$$\dim\m\le 2\dim\mathfrak r'.$$

It follows that in general, polynomiality for $Sy\bigl(\widetilde\p\bigr)$ fails. For example, if $\g$ is simple of type ${\rm A}_6$ and $\p$ is the parabolic subalgebra of $\g$ whose Levi factor consists in two extremal blocks of size two and three central blocks of size one, then we have that $Sy\bigl(\widetilde\p\bigr)$ is not polynomial, because $S(\m)^{\mathfrak r'}$ is not. More generally, polynomiality of $Sy\bigl(\widetilde\p\bigr)$ may fail if the Levi factor of the parabolic subalgebra $\p$ consists in too many blocks.

\subsection{}
 In this article we will focus on $\g$ simple of type A and on the In\"on\"u-Wigner contraction $\widetilde\p$ of a standard parabolic subalgebra $\p$ in $(\g,\,\h,\,\pi)$.
If the standard Levi factor of $\p$ consists in two blocks, namely if $\p$  is maximal, we have in this case $\widetilde\p=\p$ because the nilpotent radical of $\p$  is already abelian  in $\p$. If the standard Levi factor $\mathfrak r$ of $\p$ consists in one block then obviously $\p=\g=\mathfrak r=\widetilde\p$. Moreover we want to give in this article examples of an In\"on\"u-Wigner contraction $\widetilde\p$ of a parabolic subalgebra $\p$ for which the canonical truncation $\widetilde\p_{\Lambda}$ is not necessarily equal to the derived subalgebra $\widetilde\p'$ of the contraction. 
That is why we will consider In\"on\"u-Wigner contractions of parabolic subalgebras in $\g$ simple of type A whose Levi factor consists in three blocks for which the two extremal blocks are {\it of the same size}  (in other words, these three blocks are symmetric with respect to the antidiagonal).

\subsection{}
The definition of an adapted pair for $\a$ may be found in \cite[6.1]{JS} for example. This is a pair $(h,\,y)\in\a\times\a^*$ formed by an element $h\in\a$ which is $\ad$-semisimple (as endomorphism of $\a^*$) and an element $y\in\a^*$ which is regular (in the sense of subsection \ref{reg}), such that 
$$\ad h(y)=-y$$
where ad denotes here the coadjoint action of $\a$ on its dual space $\a^*$.

Recall that $S(\a)$ may be identified with the algebra of polynomial functions $\Bbbk[\a^*]$ on $\a^*$. A Weierstrass section for $Y(\a)$ (see \cite[2.2.1]{Po}) 
 is an affine subset $y+V$ of $\a^*$ (with $y\in\a^*$ and $V$ a vector subspace of $\a^*$) such that restriction of functions induces an algebra isomorphism between $Y(\a)$ and the algebra of polynomial functions $\Bbbk[y+V]$ on $y+V$. Since $\Bbbk[y+V]$ is isomorphic to $S(V^*)$, it follows that $Y(\a)$ is a polynomial algebra over $\Bbbk$ in $\dim V$ algebraically independent generators if it has a Weierstrass section $y+V$ (but the converse is not true in general).

\subsection{}
As we already said, if the Levi factor of the parabolic subalgebra $\p$ of $\g$ simple consists in one block, then $\widetilde\p=\g$ and in this case, a theorem of Chevalley and of Harish-Chandra gives the polynomiality of the algebra of symmetric invariants $Y(\g)=Sy(\g)=Y(\g_{\Lambda})$ and a theorem of Kostant gives an adapted pair for $\g$ and a Weierstrass section for $Y(\g)$ (also called a Kostant slice in this case).
On the other hand, for $\g$ simple of type ${\rm A}$, any maximal parabolic subalgebra $\p$ of $\g$ coincides with its contraction $\widetilde\p$. Moreover adapted pairs were already constructed in type A for the  canonical truncation $\p_{\Lambda}$ of any parabolic (and even of any biparabolic) subalgebra $\p$ of $\g$ in \cite{J5} (not necessarily maximal). Also in this case, by \cite{FJ1}, \cite{FJ2} we already know that the algebra $Sy(\p)$ is polynomial. Moreover by \cite[Thm. 6.3]{JS} we know that the adapted pair for $\p_{\Lambda}$ provides a Weierstrass section for $Y(\p_{\Lambda})=Sy(\p)$. It follows that everything was already done for contractions of maximal parabolic subalgebras in type A too.

\subsection{}\label{compindex}
Now when $\g$ is simple of type ${\rm A}_{n-1}$ and when the standard Levi factor $\mathfrak r$ of the standard parabolic subalgebra $\p$ consists in two extremal blocks of the same size (size $s$) and one central block (of size $n-2s$), the contraction $\widetilde\p=\mathfrak r\ltimes\m^a$ does not coincide with $\p=\mathfrak r\oplus\m$ as a Lie algebra in general. In this article we focus on this case, which seems to be an instructive example for which  the general results obtained in \cite{F01} about the In\"on\"u-Wigner contractions $\widetilde\p$ may apply.

Since here $\h_{\Lambda}$ is of dimension one,  the canonical truncation $\widetilde\p_{\Lambda}$ of the contraction $\widetilde\p$ is either equal to the contraction $\widetilde{\p_{\Lambda}}=(\mathfrak r'\oplus\h_{\Lambda})\ltimes\m^a$ of the canonical truncation $\p_{\Lambda}$ of $\p$ or to the derived subalgebra $\widetilde\p'$ of the contraction $\widetilde\p$ (see subsection \ref{twoposs}). Using Ra\"\i s' formula \cite{Ra} for the index of a semi-direct product, we compute the index of the contraction $\widetilde{\p_{\Lambda}}$  and show that this index is equal to the index of $\p_{\Lambda}$ (see Lemma \ref{indexindex2}). We also compute the index of $\widetilde\p'$, showing that this index is equal to the index of $\p_{\Lambda}$ minus one if $3s\neq n$ (that is, when  the central block of the Levi factor of $\p$ is not of the same size as the two extremal blocks) and to the index of $\p_{\Lambda}$ plus one, if $3s=n$ (see Lemma \ref{indexindex1}).

\subsection{}

The rest of this paper (Section \ref{constructAP}) consists in the construction of an adapted pair for the contraction $\widetilde{\p_{\Lambda}}$ . 
Since in general an adapted pair for $\p_{\Lambda}$ does not provide an adapted pair for its contraction because too many zeroes may occur when we apply the coadjoint action of $\widetilde{\p_{\Lambda}}$ on the second element of such an adapted pair, we have to  arrange the roots associated to the contraction of $\p_{\Lambda}$ very carefully (see subsection \ref{ST}).  Under some further conditions, an adapted pair for $\widetilde{\p_{\Lambda}}$ provides an upper bound for the formal character of the algebra of symmetric invariants $Y\bigl(\widetilde{\p_{\Lambda}}\bigr)$ (see Lemma \ref{lmAP}). Moreover the lower bound for the formal character of $Sy\bigl(\widetilde\p\bigr)$ constructed in \cite{F00} is also a lower bound for the formal character of $Y\bigl(\widetilde{\p_{\Lambda}}\bigr)$ and we verify  that both bounds coincide (see subsection \ref{formch}).  This implies by Lemma \ref{lmAP} that the algebra of symmetric invariants $Y\bigl(\widetilde{\p_{\Lambda}}\bigr)$ has a Weierstrass section and then is a polynomial algebra over $\Bbbk$ (see Thm \ref{thmWS}).
Since the generators of $Y\bigl(\widetilde{\p_{\Lambda}}\bigr)$ can be chosen as bi-homogeneous generators, the polynomiality of $Y\bigl(\widetilde{\p_{\Lambda}}\bigr)$ implies also the polynomiality of the subalgebra $S(\m)^{\mathfrak r'\oplus\h_{\Lambda}}\subset Y\bigl(\widetilde{\p_{\Lambda}}\bigr)$. Here as an $\mathfrak r$-module, $\m$ is isomorphic to $\Bbbk^a\otimes(\Bbbk^b)^*\oplus\Bbbk^b\otimes(\Bbbk^a)^*\oplus\Bbbk^a\otimes(\Bbbk^a)^*$ (with $a=s$ and $b=n-2s$) and $\mathfrak r'$ is isomorphic to $\mathfrak s\mathfrak l_a\times\mathfrak s\mathfrak l_b\times\mathfrak s\mathfrak l_a$.

When $3s\neq n$, the computation of indices mentioned in subsection \ref{compindex} allows to show that actually $Sy\bigl(\widetilde\p\bigl)=Y\bigl(\widetilde\p_{\Lambda}\bigr)=Y\bigl(\widetilde{\p_{\Lambda}}\bigr)$ (see Lemma \ref{casdiff}) which has then a Weierstrass section and is  polynomial (see Thm. \ref{thmWS}). We have also that $S(\m)^{\mathfrak r'}=S(\m)^{\mathfrak r'\oplus\h_{\Lambda}}$ in this case. By \cite[Chap. VI, Sec. A, 1]{F} (see also \cite[Lem. 3.2]{FJ4}) the Weierstrass section is also an affine slice to the coadjoint action of $\widetilde{\p_{\Lambda}}$  in the sense of \cite[7.3, 7.4]{J8} and \cite[1.5]{FJ4}, which means notably that  this slice meets every coadjoint orbit through this slice at exactly one point and transversally. 

When $3s=n$ that is, when the Levi factor of $\p$ consists in three blocks of the same size, we show that $\widetilde\p_{\Lambda}$ is equal to $\widetilde\p'$. 
We cannot conclude for the moment  whether $Sy\bigl(\widetilde\p\bigr)=Y\bigl(\widetilde\p'\bigr)$ is or not polynomial in this case.
\subsection{}
 In\"on\"u-Wigner contractions and their algebra of symmetric invariants were already extensively studied in \cite{P}, \cite{PPY1}, \cite{PPY2}, \cite{PY1}, \cite{PY2}, \cite{PY3}, \cite{Ph} in another context in general. Here in this present paper we believe that our results are new and our method is completely different from this used in the mentioned papers.

\section{Acknowledgements.}
We are very grateful to Oksana Yakimova for enlightening discussions about semi-direct products and also for some references. Her skills were also very helpful for computing some generic stabilizers (especially in Section \ref{compind}).

\section{Notation.}\label{Not}

Let $n$ be a positive integer. We denote by $\mathscr M_n(\Bbbk)$ the vector space  of square matrices of size $n$ with entries belonging to the base field $\Bbbk$. 
For $1\le i,\,j\le n$, denote by $E_{i,\,j}\in\mathscr M_n(\Bbbk)$ the elementary matrix with entry one on  row $i$ and  column $j$ (or simply, at the $(i,\,j)$ place) and zero elsewhere.
We denote by $\g\mathfrak l_n(\Bbbk)$ or simply $\g\mathfrak l_n$ the reductive Lie algebra equal to the vector space $\mathscr M_n(\Bbbk)$ and endowed with the usual Lie bracket $$[A,\,B]=AB-BA$$ for all $A,\,B\in\mathscr M_n(\Bbbk)$.
If $n\ge 2$, we also set $\g=\mathfrak s\mathfrak l_n(\Bbbk)$ (or simply $\mathfrak s\mathfrak l_n$) to be  the simple Lie subalgebra of $\g\mathfrak l_n(\Bbbk)$ consisting in matrices of trace zero (for $n=1$, we set $\mathfrak s\mathfrak l_1=\{0\}$).
 We take $\h$ to be the set of diagonal matrices in $\g$ : it is a Cartan subalgebra of $\g$. We denote by $\Delta$ the set of roots of $(\g,\,\h)$.
Recall that we have, as Lie algebras: 
\begin{equation}\g\mathfrak l_n=\mathfrak s\mathfrak l_n\oplus \Bbbk{\rm I}_n\label{decompgl}\end{equation}
where ${\rm I}_n=\sum_{i=1}^n E_{i,\,i}$ is the identity matrix. 

Recall the definition of the index given in subsection \ref{reg}. Then by  say \cite[1.9.12, 1.11.12]{D}, we have
\begin{align}    {\rm index}\;\mathfrak s\mathfrak l_n=n-1\;\;\;
{\rm and}\;\;\;\;\;  {\rm index}\;\mathfrak g\mathfrak l_n=n\label{indgl}.\end{align}

We denote by ${\rm GL}_n$ the group of invertible matrices of size $n$ with entries lying in $\Bbbk$ and by ${\rm SL}_n$ the subgroup of ${\rm GL}_n$ formed by matrices with determinant equal to one, so that
we have 
${\rm Lie}({\rm GL}_n)=\g\mathfrak l_n
\;\;{\rm and }\;\;\;
{\rm Lie}({\rm SL}_n)=\mathfrak s\mathfrak l_n.$

\subsection{}\label{notstand}
 Denote by $\varepsilon_j$, for $1\le j\le n$, the dual basis of the basis formed by the diagonal matrices $E_{i,\,i}$, for $1\le i\le n$. We choose the set $\pi=\{\alpha_1,\,\ldots,\,\alpha_{n-1}\}$ as set of simple roots of $(\g,\,\h)$ with $\alpha_i=\varepsilon_i-\varepsilon_{i+1}\in\h^*$ for $1\le i\le n-1$. We denote by $(\,,\,)$ the positive definite symmetric bilinear form on $\bigoplus_{j=1}^n\mathbb Q\ep_j$ for which $(\ep_j)_{1\le j\le n}$ is an orthonormal basis. 

The coroot $\alpha_i^\vee$ associated with $\alpha_i$ is $\alpha_i^\vee=E_{i,\,i}-E_{i+1,\,i+1}$ and $$\h=\bigoplus_{1\le i\le n-1}\Bbbk\alpha_i^\vee.$$
This choice of simple roots allows to decompose the set of roots $\Delta$ as $$\Delta=\Delta^+\sqcup\Delta^-$$ with $\Delta^\pm=\Delta\cap(\pm\sum_{i=1}^{n-1}\mathbb N\alpha_i)$.

For $\alpha\in\Delta$, recall that $\alpha=\varepsilon_i-\varepsilon_j$ for some $1\le i\neq j\le n$. Then we set $x_{\alpha}=E_{i,\,j}$ : it is a root vector of $\g$ with weight $\alpha$ that is, for all $h\in\h$, we have
$$[h,\,x_{\alpha}]=\alpha(h)x_{\alpha}.$$

Recall that we have the decomposition
$$\g=\h\oplus\bigoplus_{\alpha\in\Delta}\Bbbk x_{\alpha}.$$

For any simple root $\alpha\in\pi$, denote by $\varpi_{\alpha}$ the fundamental weight associated with it and with $\pi$ that is, $$\varpi_{\alpha}(\beta^\vee)=\delta_{\alpha,\,\beta}$$ for any $\alpha,\,\beta\in\pi$, where $\delta_{\alpha,\,\beta}$ is the Kronecker symbol. 

Let $\pi'\subset\pi$.
For any simple root $\alpha\in\pi'$, we denote by $\varpi'_{\alpha}$ the fundamental weight associated with it and with $\pi'$ that is, $$\varpi'_{\alpha}(\beta^\vee)=\delta_{\alpha,\,\beta}$$ for any $\alpha,\,\beta\in\pi'$.

Let $1\le i\le n-1$. We simply denote by $\varpi_i$, resp. $\varpi'_i$, the fundamental weight associated with $\pi$, resp. with $\pi'$, and with the simple root $\alpha_i$, resp. with $\alpha_i$ if $\alpha_i\in\pi'$.

We denote by $P(\pi)=\sum_{\alpha\in\pi}\mathbb Z\varpi_{\alpha}$ the weight lattice of $(\g,\,\h,\,\pi)$.

\subsection{}\label{rootsp}
Choose $s$ a positive integer such that $2s<n$ and let $\p$ be the standard parabolic subalgebra  of $\g$ associated with the subset of simple roots $\pi'=\pi\setminus\{\alpha_s,\,\alpha_{n-s}\}$. Set $a=s$ and $b=n-2s$. The Levi factor $\mathfrak r$ of $\p$ consists in two extremal symmetric blocks (with respect to the antidiagonal) of size $a$  and one central block of size $b$. 

Set $$\Delta^\pm_{\pi'}=\Delta^\pm\cap(\pm \mathbb N\pi'),\;\n^\pm_{\pi'}=\bigoplus_{\alpha\in\Delta^\pm_{\pi'}}\Bbbk x_{\alpha},\;\m=\bigoplus_{\alpha\in\Delta^+\setminus\Delta^+_{\pi'}}\Bbbk x_{\alpha}.$$

Then the standard Levi factor $\mathfrak r$ of $\p$ is equal to:
$$\mathfrak r=\n^+_{\pi'}\oplus\h\oplus\n^-_{\pi'}$$ and $\m$ is the nilpotent radical of the parabolic subalgebra $\p$
so that we have:

\begin{equation}\p=\mathfrak r\oplus\m.\label{decompp}\end{equation}
as $\mathfrak r$-modules.

The Levi subalgebra $\mathfrak r'$ of $\p$ (which is also equal to the derived subalgebra $[\mathfrak r,\,\mathfrak r]$ of $\mathfrak r$) is
$$\mathfrak r'=\n^+_{\pi'}\oplus\h_{\pi'}\oplus\n^-_{\pi'}$$
with $$\h_{\pi'}=\bigoplus_{\alpha\in\pi'}\Bbbk\alpha^{\vee}.$$

The derived subalgebra $\p'$ of $\p$ is equal to :

\begin{equation*}\p'=\mathfrak r'\oplus\m\end{equation*}  as $\mathfrak r'$-modules.

Setting $\m^-=\bigoplus_{\alpha\in\Delta^-\setminus\Delta^-_{\pi'}}\Bbbk x_{\alpha}$, the opposite parabolic subalgebra $\p^-$ of $\p$ is then equal to :
\begin{equation}\p^-=\mathfrak r\oplus\m^-\label{decomppmoins}\end{equation}
as $\mathfrak r$-modules.

Observe that, in our present case, $\dim\h_{\pi'}=n-3$.
We set $\h^{\pi\setminus\pi'}=\{h\in\h\mid\pi'(h)=0\}$, so that $\h=\h_{\pi'}\oplus\h^{\pi\setminus\pi'}$. The set of roots of $(\p,\,\h)$ is $\Delta(\pi'):=\Delta^+\sqcup\Delta^-_{\pi'}$.

We denote by $P(\pi')=\sum_{\alpha\in\pi'}\mathbb Z\varpi'_{\alpha}$ the weight lattice of $(\mathfrak r',\,\h_{\pi'},\,\pi')$.
By for instance \cite[2.5]{FJ2} there exists a positive integer $m$ such that
$$P(\pi)\subset P(\pi')\oplus\frac{1}{m}\sum_{\alpha\in\pi\setminus\pi'}\mathbb Z\varpi_{\alpha}.$$
Observe that, for every $\alpha\in\pi'$,  $\varpi'_{\alpha}$ is the projection of $\varpi_{\alpha}$ in $P(\pi')$ with respect to the above inclusion. If $\alpha\in\pi\setminus\pi'$, then the projection of $\varpi_{\alpha}$ in $P(\pi')$ is zero.

\subsection{}\label{weight}

As for instance in \cite[5.1]{FJ3}, we may define involutions $\bf i$ and $\bf j$  of $\pi$ as follows. Denoting by $w_0$, resp. $w_0'$, the longest element of the Weyl group $W$ of $(\g,\,\h)$, resp. of the Weyl group $W'$ of $(\mathfrak r',\,\h_{\pi'})$, we set ${\bf j}(\alpha)=-w_0(\alpha)$ for each $\alpha\in\pi$ and  ${\bf i}(\alpha)=-w_0'(\alpha)$, for each $\alpha\in\pi'$. For $\alpha\in\pi\setminus\pi'$, we set ${\bf i}(\alpha)={\bf j(ij)}^{r_{\alpha}}(\alpha)$ where $r_{\alpha}$ is the smallest nonnegative integer  such that ${\bf j(ij)}^{r_{\alpha}}(\alpha)\not\in\pi'$. In particular, if ${\bf j}(\alpha)\not\in\pi'$, then ${\bf i}(\alpha)={\bf j}(\alpha)$. 

In our present case ($\pi'=\pi\setminus\{\alpha_s,\,\alpha_{n-s}\}$ with $1\le s<n-s\le n-1$) we easily check that :
$$\begin{cases} {\bf  i}(\alpha_k)=\alpha_{s-k},\; {\bf i}(\alpha_{n-s+k})=\alpha_{n-k}&{\rm for}\; 1\le k\le s-1, \\
 {\bf i}(\alpha_k)=\alpha_{n-k}&{\rm for}\; s+1\le k\le n-s-1,\\
{\bf i}(\alpha_s)=\alpha_{n-s}\\
{\bf j}(\alpha_k)=\alpha_{n-k}&{\rm for}\; 1\le k\le n-1.
\end{cases}$$

We denote by $\langle{\bf ij}\rangle$  the subgroup of permutations in $\pi$ generated by the product $\bf ij$ and by $E(\pi')$ the set of $\langle {\bf ij}\rangle$-orbits in $\pi$.

We have 
\begin{align*}E(\pi')=\Bigl\{\{\alpha_k,\,\alpha_{n-s+k}\}\;;\;1\le k\le s-1,\\
\;\,\{\alpha_{\ell}\};\; s+1\le \ell\le n-s-1,
\;\{\alpha_s\},\;\{\alpha_{n-s}\}\Bigr\}.\end{align*}

Recall the notation of subsection \ref{semi}. By \cite[Prop. 7.2]{FJ2} since $\g$ is of type A, we know that the algebra $Sy(\p)$ generated by symmetric semi-invariants in $S(\p)$  is a polynomial $\Bbbk$-algebra in $\lvert E(\pi')\rvert$ variables that is,  in $n-s$ variables, each of them having a weight $\delta_{O_{\alpha}}$ equal to 
\begin{equation}\delta_{O_{\alpha}}=d_{O_{\alpha}}+d_{{\rm\bf j}(O_{\alpha})}-d'_{O_{\alpha}}-d'_{{\rm\bf j}(O_{\alpha})}\label{poidsclas}\end{equation}
 for each $O_{\alpha}\in E(\pi')$, where $d_{O_{\alpha}}=\sum_{\gamma\in O_{\alpha}}\varpi_{\gamma}$ and $d'_{O_{\alpha}}=\sum_{\gamma\in O_{\alpha}\cap\pi'}\varpi'_{\gamma}$. 

By \cite[7.2]{FJ2}, we know that the set $\Lambda(\p)$ of weights of $Sy(\p)$ is in the present case equal to 
\begin{equation}\Lambda(\p)=\mathbb N(\varpi_s+\varpi_{n-s}).\label{weightSy(p)}\end{equation}

Indeed by what we said above, we have that $\Lambda(\p)$ is the semigroup generated by the set $\{\delta_{O_{\alpha}};\;O_{\alpha}\in E(\pi')\}$ and an easy computation shows that, for each $O_{\alpha}\in E(\pi')$, we have 
\begin{equation}\delta_{O_{\alpha}}=\sum_{j=1}^s\ep_j-\sum_{j=n+1-s}^n\ep_j=\varpi_s+\varpi_{n-s}\label{weightclassique}\end{equation}
by \cite[Planche I]{BOU}.

\subsection{}\label{truncCartan}
We set $\h_{\Lambda}=\p_{\Lambda}\cap\h^{\pi\setminus\pi'}$. Then we have that 
$$\h_{\pi'}\oplus\h_{\Lambda}=\p_{\Lambda}\cap\h.$$
In other words, in view of the definition of the canonical truncation (see Eq. \ref{deftrunc}), we have that $\h_{\pi'}\oplus\h_{\Lambda}$ is the orthogonal of $\Bbbk\Lambda(\p)$ in $\h$, with respect to the duality. Hence $\dim\h_{\Lambda}=1$.

 More precisely one has that $\p_{\Lambda}=\p'\oplus\h_{\Lambda}$ with $\h_{\Lambda}=\Bbbk \rm H_{\Lambda}$ given as follows.

\begin{equation} \rm H_{\Lambda}=\left[\begin{array}{ccc} {\rm I}_a&0&0\\
0&-\frac{2a}{b} {\rm I}_b&0\\
0&0&{\rm I}_a\\
\end{array}\right].
\end{equation}

Let us explain this below.
Since ${\rm tr}(\rm H_{\Lambda})=0$, one has that ${\rm H}_{\Lambda}\in\h$. Moreover write ${\rm H}_{\Lambda}=\sum_{i=1}^a E_{i,\,i}-\frac{2a}{b}\sum_{i=a+1}^{a+b}E_{i,\,i}+\sum_{i=a+b+1}^{2a+b} E_{i,\,i}$. One verifies that $\alpha_k(\rm H_{\Lambda})=0$ for all $1\le k\le n-1$, $k\not\in\{ s,\; n-s\}$. Hence ${\rm H}_{\Lambda}\in\h^{\pi\setminus\pi'}$. Finally one checks that $(\varpi_s+\varpi_{n-s})(\rm H_{\Lambda})=0$. Hence $\rm H_{\Lambda}\in\h_{\Lambda}$ and since $\dim\h_{\Lambda}=1$, we are done.

\subsection{}\label{defcontp}
 The In\"on\"u-Wigner (or one-parameter) contraction $\widetilde\p$ with respect to the decomposition \ref{decompp} of $\p$  is  the semi-direct product $\widetilde\p=\mathfrak r\ltimes\m^a$ as defined in subsection \ref{contr}. This means that, 
as an $\mathfrak r$-module, $\widetilde\p$ is isomorphic to $\p$ and as a Lie algebra, $\widetilde\p$ has $\m^a$ as an abelian ideal.

\subsection{}
Following the definition in subsection \ref{contr}, we may also consider the In\"on\"u-Wigner contraction $\widetilde{\p_{\Lambda}}$ of $\p_{\Lambda}$ with respect to the decomposition $\p_{\Lambda}=(\mathfrak r'\oplus\h_{\Lambda})\oplus\m$, so that we have
$$\widetilde{\p_{\Lambda}}=(\mathfrak r'\oplus\h_{\Lambda})\ltimes\m^a$$
and the In\"on\"u-Wigner contraction $\widetilde{\p'}$ of $\p'$ with respect to the decomposition $\p'=\mathfrak r'\oplus\m$, so that $\widetilde{\p'}=\mathfrak r'\ltimes\m^a$.
We can easily check that the contraction $\widetilde{\p'}$ of $\p'$ coincides with the derived subalgebra of the contraction $\widetilde\p$ of $\p$. That is why we can denote it also by $\widetilde\p'$.

\section{Computation of indices.}\label{compind}

We keep the hypotheses and notation of Section \ref{Not} and we show in this section that the index of the contraction $\widetilde{\p_{\Lambda}}$ is equal to ${\rm index}\,{\p_{\Lambda}}$, and that the index of $\widetilde{\p}'$ is equal to ${\rm index}\,{\p_{\Lambda}}-1$ when $3s\neq n$ and is equal to ${\rm index}\,{\p_{\Lambda}}+1$ when $3s=n$. 

\subsection{}
Firstly, we may recall that in the present case, we have 

\begin{lm}\label{indexp}
\begin{equation}{\rm index}\,\p_{\Lambda}=n-s.\label{indexcl}\end{equation}
\end{lm}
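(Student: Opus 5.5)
The plan is to use Eq. (\ref{GKdimindex}): ${\rm index}\,\p_{\Lambda}={\rm GKdim}\,Y(\p_{\Lambda})={\rm GKdim}\,Sy(\p)$. By Eq. (\ref{semitrunc}) we have $Sy(\p)=Y(\p_{\Lambda})$.

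By \cite[Prop. 7.2]{FJ2}, recalled in subsection \ref{weight}, $Sy(\p)$ is a polynomial algebra in $\lvert E(\pi')\rvert$ algebraically independent homogeneous generators. A polynomial algebra in $N$ variables has Gelfand-Kirillov dimension (equivalently, transcendence degree of its fraction field) equal to $N$. Hence ${\rm index}\,\p_{\Lambda}=\lvert E(\pi')\rvert$.

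It remains to count the $\langle{\bf ij}\rangle$-orbits listed in subsection \ref{weight}:
\begin{itemize}
\item the $s-1$ two-element orbits $\{\alpha_k,\alpha_{n-s+k}\}$, $1\le k\le s-1$;
\item the $n-2s-1$ singletons $\{\alpha_\ell\}$, $s+1\le \ell\le n-s-1$;
\item the two singletons $\{\alpha_s\}$ and $\{\alpha_{n-s}\}$.
\end{itemize}
The total is $(s-1)+(n-2s-1)+2=n-s$, which is Eq. (\ref{indexcl}).

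The only step needing care is the orbit list itself, which I would verify directly from the formulas for ${\bf i}$ and ${\bf j}$. For $1\le k\le s-1$ one gets ${\bf ij}(\alpha_k)={\bf i}(\alpha_{n-k})=\alpha_{n-s+k}$ and ${\bf ij}(\alpha_{n-s+k})={\bf i}(\alpha_{s-k})=\alpha_k$. For $s+1\le \ell\le n-s-1$, ${\bf ij}(\alpha_\ell)={\bf i}(\alpha_{n-\ell})=\alpha_\ell$. Finally ${\bf ij}(\alpha_s)={\bf i}(\alpha_{n-s})=\alpha_s$, using that ${\bf i}$ is an involution.

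As an independent sanity check, I would also use the Tauvel-Yu/Joseph formula for the index of the truncated parabolic in type A via the Kostant cascade (or the meander graph of $\p$). The meander for the compositions $(s,n-2s,s)$ and $(n)$ has $n-s$ components, consistent with the count above.
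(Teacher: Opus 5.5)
Your argument is the paper's own: ${\rm index}\,\p_{\Lambda}={\rm GKdim}\,Sy(\p)=\lvert E(\pi')\rvert$ by \cite{FJ2}, followed by the orbit count $(s-1)+(n-2s-1)+2=n-s$. You reach the GK dimension through polynomiality, \cite[Prop.~7.2]{FJ2}, whereas the paper cites \cite[Cor.~5.4.2, Prop.~7.1]{FJ2} directly; this makes no difference in type A, and your direct check of the $\langle{\bf ij}\rangle$-orbits is correct.

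One correction to your optional sanity check. The meander graph for $(s,n-2s,s)$ and $(n)$ has $\lceil s/2\rceil+\lceil (n-2s)/2\rceil$ components, not $n-s$. What equals $n-s$ is the Dergachev--Kirillov quantity $2C+P$, i.e.\ the index of the corresponding parabolic of $\g\mathfrak l_n$, so ${\rm index}\,\p=n-s-1$. To recover $n-s$ you would still need ${\rm index}\,\p_{\Lambda}={\rm index}\,\p+\dim(\p/\p_{\Lambda})$.
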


\begin{proof}
 By subsection \ref{GKdim} we know that ${\rm index}\,\p_{\Lambda}$ is equal to the Gelfand-Kirillov dimension of $Sy(\p)$, which is also equal by \cite[Cor. 5.4.2, Prop. 7.1]{FJ2} to the number $\lvert E(\pi')\rvert$ of $\langle{\bf ij}\rangle$-orbits in $\pi$. 
By subsection \ref{weight}, we know that $\lvert E(\pi')\rvert=n-s$. Hence we obtain Eq. \ref{indexcl}.
\end{proof}

\subsection{}
To compute the index of a semi-direct product $\q=\mathfrak l\ltimes V^a$, where $\mathfrak l$ is a finite-dimensional Lie algebra and $V$ is a finite-dimensional $\mathfrak l$-module, and also an abelian ideal in $\q$, we use the following Ra\"\i s' formula (\cite{Ra} or \cite[40.4]{TY}).

We view each element of $V^*$ as an element in $\q^*$ which vanishes on $\mathfrak l$ and each element of $\mathfrak l^*$ as an element in $\q^*$ which vanishes on $V$. Then $\q^*=\mathfrak l^*\oplus V^*$ as vector spaces.
For $\gamma\in V^*$, let $\mathfrak l_{\gamma}=\{x\in \mathfrak l\mid \forall v\in V, \,\gamma(x.v)=0\}$ be the stabilizer of $\gamma$ in $\mathfrak l$. We have :

\begin{equation*} {\rm index}\,\q={\rm index}\,\mathfrak l_{\gamma}+\dim V-\dim \mathfrak l+\dim \mathfrak l_{\gamma}\end{equation*}
where $\gamma\in V^*$ is a generic point with respect to $\q^*$ (that is, there exists an element $\xi\in\mathfrak l^*$ such that $\xi+\gamma\in\q^*$ is a regular element of $\q^*$, in the sense of subsection \ref{reg}).

Then for the semi-direct product $\widetilde\p'=\mathfrak r'\ltimes\m^a$, the Ra\"\i s' formula gives
\begin{equation}{\rm index}\,\widetilde\p'={\rm index}\,\mathfrak r'_{\gamma}+\dim\m-\dim \mathfrak r'+\dim \mathfrak r'_{\gamma}\label{index1}\end{equation}
and for the semi-direct product $\widetilde{\p_{\Lambda}}=(\mathfrak r'\oplus\h_{\Lambda})\ltimes\m^a$, the Ra\"\i s' formula gives
\begin{equation}{\rm index}\,\widetilde{\p_{\Lambda}}={\rm index}\,(\mathfrak r'\oplus\h_{\Lambda})_{\gamma}+\dim\m-\dim(\mathfrak r'\oplus\h_{\Lambda})+\dim(\mathfrak r'\oplus\h_{\Lambda})_{\gamma}\label{index2}\end{equation}
for a generic point $\gamma\in\m^*$ (with respect to $\widetilde\p'^*$, resp. to $\widetilde{\p_{\Lambda}}^*$).

\subsection{}
\begin{lm}\label{indexindex1}
We have that

\begin{align}{\rm index}\,\widetilde\p'=\begin{cases}{\rm index}\,\p_{\Lambda}-1&{\rm if}\;3s\neq n\\
{\rm index}\,\p_{\Lambda}+1&{\rm if}\;3s=n\end{cases}\label{ind1}\end{align}

\end{lm}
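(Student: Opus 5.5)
The plan is to evaluate formula \eqref{index1} explicitly. This means computing a generic stabilizer $\mathfrak r'_{\gamma}$ for $\gamma\in\m^*$ together with its index, and then comparing with ${\rm index}\,\p_{\Lambda}=n-s=a+b$ from Lemma \ref{indexp}.

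Write $\Bbbk^n=V_1\oplus V_2\oplus V_3$ according to the three diagonal blocks, with $\dim V_1=\dim V_3=a$ and $\dim V_2=b$. As an $\mathfrak r$-module, $\m\cong {\rm Hom}(V_2,V_1)\oplus{\rm Hom}(V_3,V_2)\oplus{\rm Hom}(V_3,V_1)$. Using the trace form, $\m^*\cong\m^-$, so an element $\gamma\in\m^*$ is a triple $(A,B,C)$ with
$$A\in{\rm Hom}(V_1,V_2),\quad B\in{\rm Hom}(V_2,V_3),\quad C\in{\rm Hom}(V_1,V_3).$$
This is a representation of the quiver $1\to2\to3$ with an extra arrow $1\to3$, of dimension vector $(a,b,a)$. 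The stabilizer of $\gamma$ in $\g\mathfrak l_a\times\g\mathfrak l_b\times\g\mathfrak l_a$ consists of the triples $(g_1,g_2,g_3)$ with
$$g_2A=Ag_1,\qquad g_3B=Bg_2,\qquad g_3C=Cg_1.$$
Then $\mathfrak r'_{\gamma}$ is obtained by intersecting with the three trace-zero conditions. We have $\dim\m=2ab+a^2$ and $\dim\mathfrak r'=2a^2+b^2-3$, so \eqref{index1} becomes
$${\rm index}\,\widetilde\p'={\rm index}\,\mathfrak r'_{\gamma}+\dim\mathfrak r'_{\gamma}+2ab-a^2-b^2+3.$$
It remains to compute $\mathfrak r'_{\gamma}$ for generic $(A,B,C)$. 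It suffices to work with a Zariski-dense set of $\gamma$, since the generic stabilizer of $\m^*$ realizes the generic point in Ra\"\i s' formula.

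The case $b=a$ (that is, $3s=n$) is quick. For generic $\gamma$, both $A$ and $B$ are isomorphisms, so $g_2=Ag_1A^{-1}$ and $g_3=BAg_1(BA)^{-1}$. The last condition then says that $g_1$ commutes with the generic endomorphism $(BA)^{-1}C$ of $V_1$. Hence $g_1$ lies in a Cartan subalgebra of $\g\mathfrak l_a$, and the trace conditions on $g_2$ and $g_3$ coincide with ${\rm tr}\,g_1=0$. So $\mathfrak r'_{\gamma}$ is abelian of dimension $a-1$, and the formula gives $2(a-1)+3=2a+1={\rm index}\,\p_{\Lambda}+1$.

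For $b\neq a$, I will split into the cases $b<a$ and $b>a$. In each case I will first use $A$ and $B$ (generic, hence of maximal rank) to normalize. For $b>a$, $A$ is injective and $B$ is surjective, and one splits $V_2=A(V_1)\oplus W\oplus$ (a complement related to $\ker B$), with $C$ then giving an extra generic operator. For $b<a$, $A$ is surjective and $B$ is injective, and one uses $\ker A$ and ${\rm im}\,B$ in the same way. After normalizing, I will solve the linear equations for $(g_1,g_2,g_3)$ blockwise, identify $\mathfrak r'_{\gamma}$ as an explicit Lie algebra, and compute its index. I expect it to be a product of a torus with a copy of some $\g\mathfrak l$ or $\mathfrak s\mathfrak l$ of size $|b-a|$ (or $b-2a$), possibly extended by a nilpotent part. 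The target is ${\rm index}\,\mathfrak r'_{\gamma}+\dim\mathfrak r'_{\gamma}=(a-b)^2+a+b-4$, which yields ${\rm index}\,\widetilde\p'=a+b-1$. A useful sanity check is the relation of $\mathfrak r'_{\gamma}$ with the stabilizer in $\mathfrak r'\oplus\h_{\Lambda}$: the latter should contain one extra dimension precisely when $3s\neq n$, which is consistent with a shift by one.

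The main obstacle is the generic stabilizer computation when $b\neq a$, and in particular keeping track of the trace-zero conditions. For $b\neq a$ these conditions are no longer automatically dependent, and I expect this is exactly what produces $-1$ instead of $+1$. If a subcase such as $a<b<2a$ versus $b\ge 2a$ turns out to behave differently, I will treat it separately. As a fallback I will verify the answer for small $(a,b)$ in order to guide the general normalization.
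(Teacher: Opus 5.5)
Your strategy is the paper's: Ra\"\i s' formula \eqref{index1} together with an explicit generic stabilizer $\mathfrak r'_{\gamma}$. Your bookkeeping, which reduces everything to $\dim\mathfrak r'_{\gamma}+{\rm index}\,\mathfrak r'_{\gamma}$, is right. Your treatment of $3s=n$ is also right. There you normalize $A$, $B$ and take the centralizer of $(BA)^{-1}C$. The paper instead normalizes the block $C$ to ${\rm I}_a$ and uses Cartan subspaces of the symmetric pair $(\mathfrak{gl}_{a+b},\mathfrak{gl}_a\times\mathfrak{gl}_b)$ to reach the representatives ${\rm Z_C}$. Both give, up to conjugation, the torus $\{(T,T,T)\}$ with ${\rm tr}\,T=0$.

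The genuine gap is the case $3s\neq n$, which is half of the statement and the half that Lemma \ref{casdiff} relies on. You compute nothing there. The value $(a-b)^2+a+b-4$ is read off from the conclusion you want. The hedges in your predicted shape of $\mathfrak r'_{\gamma}$ (a nilpotent part, a block of size $b-2a$, a split between $a<b<2a$ and $b\ge 2a$) are unsupported, and in fact none of them occurs. As written, the $-1$ is asserted, not proved.

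Your own normalization finishes this in a few lines.

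\emph{Case $b>a$.} Generically $A$ is injective, $B$ is surjective, and $V_2=A(V_1)\oplus\ker B$; no extra complement $W$ is needed. So $BA$ is invertible. The equations $g_2A=Ag_1$ and $g_3B=Bg_2$ force $g_2=(Ag_1A^{-1})\oplus D$ with $D\in\mathfrak{gl}(\ker B)$ arbitrary, and $g_3=(BA)g_1(BA)^{-1}$. Then $g_3C=Cg_1$ says exactly that $g_1$ commutes with the regular semisimple endomorphism $(BA)^{-1}C$.

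\emph{Case $b<a$.} Dually, $C$ is invertible and $V_1=\ker A\oplus U$ with $U=C^{-1}({\rm im}\,B)$. Then $g_3=Cg_1C^{-1}$, and $g_1=D\oplus g_1'$ with $D\in\mathfrak{gl}(\ker A)$ arbitrary and $g_1'$ commuting with $C^{-1}BA_{\mid U}\in{\rm End}(U)$.

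In both cases the stabilizer in $\mathfrak{gl}_a\times\mathfrak{gl}_b\times\mathfrak{gl}_a$ is $\mathfrak t_c\times\mathfrak{gl}_{\lvert a-b\rvert}$. Here $\mathfrak t_c$ is a Cartan subalgebra of $\mathfrak{gl}_c$ with $c=\min(a,b)$; these are the paper's cases I and II. The three trace conditions reduce to two independent conditions, ${\rm tr}\,T={\rm tr}\,D=0$, on the torus part $T$ and on $D$. For $a=b$ they reduce to a single condition. This is exactly the mechanism you suspected. Hence $\mathfrak r'_{\gamma}\cong(\mathfrak t_c\cap\mathfrak{sl}_c)\times\mathfrak{sl}_{\lvert a-b\rvert}$, of dimension $c-1+(a-b)^2-1$ and index $c-1+\lvert a-b\rvert-1$. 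This gives your target and ${\rm index}\,\widetilde\p'=a+b-1=n-s-1$.

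Finally, replace ``Zariski-dense'' by ``nonempty Zariski-open''. Your normalizing conditions are open: maximal ranks, the transversality above, invertibility of $C$ when $b<a$, and regular semisimplicity. That openness is what guarantees they meet the open set on which Ra\"\i s' formula holds.
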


\begin{proof}

According to Ra\"\i s' formula, we have first to compute  the expression of a generic point in $\m^*$. 

Recall that we have set $a=s$ and $b=n-2s$ which are respectively the sizes of the two extremal symmetric blocks and of the central block of the Levi factor of $\p$. 
Set $R=({\rm GL}_a\times{\rm GL}_b\times{\rm GL}_a)\cap{\rm SL}_n$ so that $\mathfrak r={\rm Lie}(R)$ and denote by $\exp(\m^a)$ the group generated by the matrices $\exp(x)$ for $x\in\m^a$. Set also $\widetilde P=R\ltimes{\rm exp}\,(\m)$  the semi-direct product where $\exp(\m)$ is a normal Abelian unipotent subgroup of $\widetilde P$.
Then $\widetilde\p={\rm Lie}(\widetilde P)$.

 Identifying, through the Killing form of $\g$, the subspace $\m^*$ with the nilpotent radical $\m^-$ of the opposite parabolic subalgebra $\p^-$ of $\p$ (see Eq. \ref{decomppmoins}), we have that 
 \begin{equation}\m^*\simeq(\Bbbk^a)^*\otimes\Bbbk^b\oplus(\Bbbk^a)^*\otimes\Bbbk^a\oplus(\Bbbk^b)^*\otimes\Bbbk^a\label{m}\end{equation} as an $R$-module.
 Let $\gamma\in\m^*$ (or in $\m^-$ via the above identification). One says that $\gamma$ is $\mathfrak r$-regular if $\dim\mathfrak r_{\gamma}=\min_{\delta\in\m^*}\dim\mathfrak r_{\delta}$. By \cite[40.2.1]{TY}, the set of $\mathfrak r$-regular elements of $\m^*$ is a non-empty Zariski open subset of $\m^*$. 
  Moreover since the projection of ${\mathfrak r}^*\oplus\m^*$ onto $\m^*$ is an open map, the set of elements in $\m^*$ (or in $\m^-$) which are generic points with respect to $\widetilde\p^*$ is also a non-empty open subset of $\m^*$ (or of $\m^-$). It is the same if we consider generic points in $\m^*$ with respect to $\widetilde\p'^*$ and to $\widetilde{\p_{\Lambda}}^*$.
 
 Then the set of elements in $\m^-$ which are $\mathfrak r$-regular and also generic points with respect to $\widetilde\p'^*$ and to $\widetilde{\p_{\Lambda}}^*$ is a non-empty open subset of $\m^-$.
 
 Denote by $\mathscr M_{i,\,j}(\Bbbk)$ the set of all matrices with $i$ rows and $j$ columns and whose entries belong to $\Bbbk$. Denote by $0_{i,\,j}\in\mathscr M_{i,\,j}(\Bbbk)$ the matrix with zero everywhere.
  
  Consider the subset $\m_1$ of $\m^-$ formed by the following elements $\gamma$ :
 \[
 \gamma=\left[
 \begin{array}{c|c|c}
 0_{a,\,a}&0_{a,\,b}&0_{a,\,a}\\
 \hline
 \rm X&0_{b,\,b}&0_{b,\,a}\\
 \hline
 \rm Z&\rm Y&0_{a,\,a}
 \end{array}
 \right]
 \]
 with ${\rm X}\in\mathscr M_{b,\,a}(\Bbbk)\simeq(\Bbbk^a)^*\otimes\Bbbk^b$, ${\rm Y}\in\mathscr M_{a,\,b}(\Bbbk)\simeq(\Bbbk^b)^*\otimes\Bbbk^a$ and ${\rm Z}\in{\rm GL}_a$. 
 
 The subset $\m_1$ is a non-empty open subset of $\m^-$ and then the set $\m_2$ of elements $\gamma\in\m_1$  which are also $\mathfrak r$-regular and a generic point with respect to $\widetilde\p'^*$ and to $\widetilde{\p_{\Lambda}}^*$ is a non-empty open subset of $\m^-$.
 
 Denote by $\m_3$ the  subset of $\m^-$ formed by the following elements $\widetilde\gamma$ :
 \[
\widetilde\gamma= \left[
 \begin{array}{c|c|c}
 0_{a,\,a}&0_{a,\,b}&0_{a,\,a}\\
 \hline
 \rm\widetilde X&0_{b,\,b}&0_{b,\,a}\\
 \hline
 {\rm I}_a&\rm\widetilde Y&0_{a,\,a}
 \end{array}
 \right]
 \]
 where $({\rm\widetilde X},\,{\rm\widetilde Y})\in(\Bbbk^a)^*\otimes\Bbbk^b\times(\Bbbk^b)^*\otimes\Bbbk^a$. 
 
 Observe that each element $\widetilde\gamma\in\m_3$ is $R$-conjugate to an element $\gamma\in\m_1$. The subset $\{\widetilde\gamma\in\m_3\mid \exists \gamma\in\m_2,\,\widetilde\gamma\in R.\gamma\}$  is a non-empty open subset of $\m_3$ and the irreducible smooth variety $\m_3$ has $(\Bbbk^a)^*\otimes\Bbbk^b\oplus(\Bbbk^b)^*\otimes\Bbbk^a$ as a tangent space.

 Consider
 an element $g\in R$ which stabilizes $\m_3$. Then $g$ is of the form : 
  \[g=\left[
 \begin{array}{c|c|c}
\rm A&0_{a,\,b}&0_{a,\,a}\\
 \hline
 0_{b,\,a}&\rm B&0_{b,\,a}\\
 \hline
 0_{a,\,a}&0_{a,\,b}&\rm A\\
 \end{array}
 \right]
 \]
 
 with ${\rm A }\in{\rm GL}_a$, ${\rm B}\in{\rm GL}_b$ such that $(\det\rm A)^2\det\rm B=1$ and we have :
 
  \begin{align}
g.\widetilde\gamma=g\widetilde\gamma g^{-1}= \left[
 \begin{array}{c|c|c}
 0_{a,\,a}&0_{a,\,b}&\hskip0.5cm 0_{a,\,a}\hskip0.7cm\\
 \hline
 \rm B\widetilde X \rm A^{-1}&0_{b,\,b}&\hskip0.5cm 0_{b,\,a}\hskip0.7cm\\
 \hline
 {\rm I}_a&\rm A\widetilde Y \rm B^{-1}&\hskip0.5cm 0_{a,\,a}\hskip0.7cm
 \end{array}
 \right]
 \label{act}\end{align}
 
The above action of ${\rm GL}_a\times{\rm GL}_b$ on $\mathfrak m_3$ comes from the symmetric pair $(\g\mathfrak l_{a+b},\,\g\mathfrak l_a\times\mathfrak g\mathfrak l_b)$.

Let 
\[ {\rm C}={\rm diag}(d_1,\,\ldots,\,d_c)\] be a diagonal matrix of size $c$ 
with $c=\min(a,\,b)$, $d_i\in\Bbbk^*$ for all $i$, $1\le i\le c$, and $d_i\neq d_j$ for all $i\neq j$. Denote by $\mathcal C$ the set of all such matrices ${\rm C}$.

\begin{itemize}

\item[] Case I : $a>b$. For $\rm C\in\mathcal C$, one sets

\[
\rm Z_C=\left [
\begin{array}{c|c|c}
0_{a,\,a}&0_{a,\,b}&\hskip0.4cm 0_{a,\,a}\hskip1cm\\
\hline 
\left[\begin{array}{c|c}{\rm C}&0_{b,\,a-b}\end{array}\right]&0_{b,\,b}&\hskip0.5cm 0_{b,\,a}\hskip1cm\\
\hline
{\rm I}_a&
\left[\begin{array}{c}{\rm I}_b\\
\hline
0_{a-b,\,b}
\end{array}\right]& \hskip0.4cm 0_{a,\,a} \hskip1cm\\
\end{array}\right ]
\]

\item[] Case II : $a<b$. For $\rm C\in\mathcal C$, one sets

\[
\rm Z_C=\left [
\begin{array}{c|c|c}
0_{a,\,a}&0_{a,\,b}&\hskip0.5cm 0_{a,\,a}\hskip0.7cm\\
\hline
\left[\begin{array}{c}{\rm C}\\\hline
0_{b-a,\,a}\end{array}\right]&0_{b,\,b}&\hskip0.5cm 0_{b,\,a}\hskip0.7cm\\
\hline
{\rm I}_a&
\left[\begin{array}{c|c}{\rm I}_a&
0_{a,\,b-a}\\
\end{array}\right]&\hskip0.5cm 0_{a,\,a}\hskip0.7cm\\
\end{array}\right]
\]

\item[] Case III : $a=b$. For $\rm C\in\mathcal C$, one sets

\[
\rm Z_C=\left[
\begin{array}{c|c|c}
0_{a,\,a}&0_{a,\,b}& 0_{a,\,a}\\
\hline
{\rm C}
&0_{b,\,b}& 0_{b,\,a}\\
\hline
{\rm I}_a&
{\rm I}_a
&0_{a,\,a}\\
\end{array}\right]
\]

\end{itemize}
\medskip

The theory of symmetric pairs and in particular of Cartan subspaces in the odd part  of the symmetric pair $(\g\mathfrak l_{a+b},\,\g\mathfrak l_a\times\mathfrak g\mathfrak l_b)$ (see for instance \cite[37.4]{TY}) allows us to claim that there exists a generic point in $\m^*$ with respect to $\widetilde\p'^*$ and to $\widetilde{\p_{\Lambda}}^*$, which is also $\mathfrak r$-regular and which is $R$-conjugate to an element $\rm Z_C$, with $\rm C\in\mathcal C$, as above.
For the reader's convenience, we will give some explanations below.
Set ${\rm G}_0={\rm GL}_a\times{\rm GL}_b$, $\mathcal Z=\{{\rm Z_C};\;{\rm C}\in\mathcal C\}\subset\m_3$ and $m:{\rm G}_0\times\mathcal Z\longrightarrow{\rm G}_0.\mathcal Z$ the above action (Eq. \ref{act}). Recall that $c=\min(a,\,b)$, so that $\dim\mathcal Z=c$.  For ${\rm C}\in\mathcal C$, we can easily check that the dimension of the fibre $m^{-1}(\rm Z_C)$ is equal to $c+(a-b)^2$  and then by \cite[Cor. 15.5.4]{TY} that 

\begin{align*}\dim({\rm G}_0.\mathcal Z)\ge \dim({\rm G}_0\times\mathcal Z)-\dim m^{-1}({\rm Z_C})\\
=a^2+b^2+c-(c+(a-b)^2)=2ab=\dim\m_3.\end{align*} 
Hence we have that the Zariski closure of ${\rm G}_0.\mathcal Z$ is equal to $\m_3$.
Then ${\rm G}_0.\mathcal Z$, which is a constructible set, as the image of the constructible set ${\rm G}_0\times\mathcal Z$ by a morphism, contains a non-empty open subset of its closure that is, of $\m_3$.

We can conclude by what we said above that there exists an element $\widetilde\gamma\in\m_3$ which is ${\rm G}_0$-conjugate to an element $\rm Z_C$ for ${\rm C}\in\mathcal C$ and which is $R$-conjugate to an element in $\m^*$ which is $\mathfrak r$-regular and also generic with respect to $\widetilde\p'^*$ and to $\widetilde{\p_{\Lambda}}^*$.

It remains to give the expression of an element $g\in R$ in the stabilizer $R_{\rm Z_C}$ of $\rm Z_C$ for ${\rm C}\in\mathcal C$. For any $r\in\mathbb N^*$, denote by $\Bbb T_r\simeq(\Bbbk^*)^r$ the standard maximal torus of ${\rm GL}_r$. Then ${\rm Lie}\,(\Bbb T_r)$ is the set of all diagonal matrices of $\mathscr M_r(\Bbbk)$. An easy computation shows that an element $g\in R_{\rm Z_C}$ if and only if $g$ has one of the  three following forms.\medskip

\begin{enumerate}

\item[]Case I : $a>b$.

 \[g=\left[
 \begin{array}{c|c|c}
\left[\begin{array}{c|c}
{\rm T}&0_{b,\,a-b}\\
\hline
0_{a-b,\,b}&{\rm D }\\
\end{array}\right]&0_{a,\,b}&0_{a,\,a}\\
\hline
 0_{b,\,a}& 
 {\rm T}&0_{b,\,a}\\\hline
 0_{a,\,a}& 0_{a,\,b}&\left[\begin{array}{c|c}
{\rm T}&0_{b,\,a-b}\\
\hline
0_{a-b,\, b}&{\rm D} \\
\end{array}\right]\\
 \end{array}
 \right]
 \]

with ${\rm T}\in\Bbb T_b,\;{\rm D}\in{\rm GL}_{a-b}$ and $(\det {\rm T})^3(\det {\rm D})^2=1$.

\medskip

\item[] Case II : $a<b$.

 \[g=\left[
 \begin{array}{c|c|c}
 {\rm T}&0_{a,\,b}&0_{a,\,a}\\\hline
 0_{b,\,a}&\left[\begin{array}{c|c}
{\rm T}&0_{a,\,b-a}\\
\hline
0_{b-a,\,a}&{\rm D }\\
\end{array}\right]
&0_{b,\,a}\\
\hline
0_{a,\,a}&0_{a,\,b}&{\rm T} \\
 \end{array}
 \right]
 \]

with ${\rm T}\in\Bbb T_a;\;{\rm D}\in{\rm GL}_{b-a}$ and $(\det {\rm T})^3\det {\rm D}=1$.

\medskip

\item[] Case III : $a=b$.

 \[g=\left[
 \begin{array}{c|c|c}
 {\rm T}&0_{a,\,a}&0_{a,\,a}\\\hline
 0_{a,\,a}&{\rm T}&0_{a,\,a}\\
\hline
0_{a,\,a}&0_{a,\,a}&{\rm T}\\
 \end{array}
 \right]
 \]

with ${\rm T}\in\Bbb T_a$ and $(\det \rm T)^3=1$.
\end{enumerate}

Denote by $R'$ the derived subgroup of $R$ so that $R'={\rm Lie}(\mathfrak r')$. Then $g\in R'_{\rm Z_C}$  if and only if $g$ is of the above form (case I, II or III) with moreover $\det\rm T=\det\rm D=1$.

Let $\gamma\in\m^*$ which is $\mathfrak r$-regular, generic with respect to $\widetilde\p'^*$ and to $\widetilde{\p_{\Lambda}}^*$ and which is $R$-conjugate to an element $\rm Z_C$, with $\rm C\in\mathcal C$, as above. Such an element $\gamma$ exists by what we explained above.

Then the above expression of an element in $R'_{\rm Z_C}$ gives that \begin{equation*}\dim\mathfrak r'_{\gamma}=\dim R'_{\gamma}=\dim R'_{\rm Z_C}=
\begin{cases}
b-1+(a-b)^2-1 &\hbox{\rm if}\;a>b\\
a-1+(b-a)^2-1&\hbox{\rm if}\;a<b\\
a-1&\hbox{\rm if}\;a=b
\end{cases}
\end{equation*}

and moreover by subsection \ref{reg} and by \ref{indgl} we have \begin{equation*}
{\rm index}\,\mathfrak r'_{\gamma}={\rm index}\, R'_{\gamma}={\rm index}\, R'_{\rm Z_C}=
\begin{cases}
b-1+a-b-1=a-2 &\hbox{\rm if}\;a>b\\
a-1+b-a-1=b-2&\hbox{\rm if}\;a<b\\
a-1&\hbox{\rm if}\;a=b
\end{cases}
\end{equation*}

One also has that
$$\dim\m=2ab+a^2;\;\dim\mathfrak r'=2(a^2-1)+b^2-1$$ by \ref{m} and since $\mathfrak r'\simeq\mathfrak s\mathfrak l_a\times\mathfrak s\mathfrak l_b\times\mathfrak s\mathfrak l_a$.

Then, by Eq. \ref{index1}, one has that
\begin{equation*} {\rm index}\,\widetilde\p'=\begin{cases}
a+b-1&\hbox{\rm if}\;a>b\\
a+b-1&\hbox{\rm if}\;a<b\\
2a+1&\hbox{\rm if}\;a=b
\end{cases}.  \end{equation*}

Recalling that $a=s$ and $b=n-2s$ and Eq. \ref{indexcl}, this completes the proof.\end{proof}

\subsection{}

\begin{lm}\label{indexindex2}

We have that
 \begin{equation}{\rm index}\,\widetilde{\p_{\Lambda}}={\rm index}\,\p_{\Lambda}.\label{ind2}\end{equation}

\end{lm}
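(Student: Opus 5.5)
The plan is to apply Ra\"\i s' formula in the form of Eq. \ref{index2}, evaluated at the same generic point $\gamma\in\m^*$ used in the proof of Lemma \ref{indexindex1}. Recall that $\gamma$ was chosen $\mathfrak r$-regular, generic with respect to $\widetilde{\p_{\Lambda}}^*$, and $R$-conjugate to some ${\rm Z_C}$ with ${\rm C}\in\mathcal C$. Since conjugation by $R$ normalizes $\mathfrak r'\oplus\h_{\Lambda}$ (both $\mathfrak r'$ and $\h_{\Lambda}\subset\mathfrak z(\mathfrak r)$ are $R$-stable), the stabilizer $(\mathfrak r'\oplus\h_{\Lambda})_{\gamma}$ is isomorphic to $(\mathfrak r'\oplus\h_{\Lambda})_{\rm Z_C}$. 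So it suffices to compute the dimension and the index of $(\mathfrak r'\oplus\h_{\Lambda})_{\rm Z_C}=\mathfrak r_{\rm Z_C}\cap(\mathfrak r'\oplus\h_{\Lambda})$.

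The key observation describes $\mathfrak r'\oplus\h_{\Lambda}$ inside $\mathfrak r$. An element ${\rm diag}({\rm A},{\rm B},{\rm A}')$ of $\mathfrak r$ lies in $\mathfrak r'\oplus\h_{\Lambda}=\p_{\Lambda}\cap\mathfrak r$ if and only if it is killed by $\varpi_s+\varpi_{n-s}=\sum_{j\le s}\ep_j-\sum_{j>n-s}\ep_j$, that is, ${\rm tr\,A}={\rm tr\,A}'$ (Eq. \ref{deftrunc}, \ref{weightclassique}, subsection \ref{truncCartan}). The elements of $\mathfrak r_{\rm Z_C}$ are the Lie algebra versions of the three forms listed for $R_{\rm Z_C}$, and in all of them the two extremal blocks coincide. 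Hence this condition is automatic and $(\mathfrak r'\oplus\h_{\Lambda})_{\rm Z_C}=\mathfrak r_{\rm Z_C}$. This is the step where the contraction of $\p_{\Lambda}$ differs from $\widetilde\p'$: we no longer impose separately ${\rm tr\,T}=0$ and ${\rm tr\,D}=0$, only the single trace-zero condition of $\g$. Concretely, the stabilizer is $\{({\rm T},{\rm D})\in{\rm Lie}(\Bbb T_c)\times\g\mathfrak l_{|a-b|}\}$ cut by one linear trace condition: $3\,{\rm tr\,T}+2\,{\rm tr\,D}=0$ in case I, $3\,{\rm tr\,T}+{\rm tr\,D}=0$ in case II, and ${\rm tr\,T}=0$ in case III.

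From this description one reads off the following. In case I the dimension is $b+(a-b)^2-1$ and the index is $a-1$. In case II the dimension is $a+(b-a)^2-1$ and the index is $b-1$. In case III the stabilizer is abelian of dimension $a-1$, so its index is also $a-1$. The index values use Eq. \ref{indgl}, noting that a torus times $\g\mathfrak l_r$ cut by one central linear condition loses exactly one from both dimension and index. We also have $\dim(\mathfrak r'\oplus\h_{\Lambda})=2a^2+b^2-2$ and $\dim\m=2ab+a^2$.

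Plugging these into Eq. \ref{index2}, a routine computation gives
\[
{\rm index}\,\widetilde{\p_{\Lambda}}=a+b=n-s
\]
in all three cases. By Lemma \ref{indexp} this equals ${\rm index}\,\p_{\Lambda}$, which proves Eq. \ref{ind2}.

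The main obstacle I expect is the justification that $\gamma$ is simultaneously a generic point for $\widetilde{\p_{\Lambda}}^*$ in Ra\"\i s' sense and conjugate to some ${\rm Z_C}$. This was arranged in the proof of Lemma \ref{indexindex1}, since the relevant open sets intersect. The remaining check is that computing the index of the stabilizer at ${\rm Z_C}$ rather than at $\gamma$ is harmless, which follows from the $R$-equivariance noted above.
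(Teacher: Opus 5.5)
Your proposal is correct and matches the paper's proof: you apply Ra\"\i s' formula (Eq. \ref{index2}) at a generic point $R$-conjugate to ${\rm Z_C}$, observe that $(\mathfrak r'\oplus\h_{\Lambda})_{\rm Z_C}=\mathfrak r_{\rm Z_C}$, and reproduce the paper's dimension and index counts in the three cases, each giving $a+b=n-s$. The only cosmetic difference is how you get that equality of stabilizers: you characterize $\mathfrak r'\oplus\h_{\Lambda}$ inside $\mathfrak r$ as the kernel of ${\rm tr\,A}-{\rm tr\,A}'$, whereas the paper splits each stabilizer element explicitly into an $\mathfrak r'$-part plus a multiple of ${\rm H}_{\Lambda}$.
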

\begin{proof}
To compute now the index of $\widetilde{\p_{\Lambda}}$, recall subsection \ref{truncCartan}, notably the expression of $\h_{\Lambda}=\Bbbk H_{\Lambda}$.

We may observe that $\rm H_{\Lambda}$ does not belong to the generic stabilizer $\mathfrak r_{\rm Z_C}={\rm Lie}(R_{\rm Z_C})$ (see cases I, II and III in the end of the proof of the previous lemma). 

Recall Eq. \ref{decompgl}.
Consider case I, when $a>b$.
By what we said above, the generic stabilizer $\mathfrak r_{\rm Z_C}\subset\mathfrak s\mathfrak l_n$ consists of the following matrices

 \[\left[
 \begin{array}{c|c|c}
\left[\begin{array}{c|c}
t+y{\rm I}_b&0_{b,\,a-b}\\
\hline
0_{a-b,\,b}&d\\
\end{array}\right]&0_{a,\,b}&0_{a,\,a}\\
\hline
 0_{b,\,a}&
 t+y{\rm I}_b&0_{b,\,a}\\\hline
 0_{a,\,a}&0_{a,\,b}&\left[\begin{array}{c|c}
t+y{\rm I}_b&0_{b,\,a-b}\\
\hline
0_{a-b,\, b}&d \\
\end{array}\right]\\
 \end{array}
 \right]
 \]
for every $t\in{\rm Lie}(\Bbb T_b)\cap\mathfrak s\mathfrak l_b$, $d\in \g\mathfrak l_{a-b}$ and $y\in\Bbbk$ such that ${\rm tr}(d)=-\frac{3b}{2}y$.

Write \[\left[\begin{array}{c|c}
t+y{\rm I}_b&0_{b,\,a-b}\\
\hline
0_{a-b,\,b}&d\\
\end{array}\right]\in\g\mathfrak l_a\]  as $\zeta+z{\rm I}_a$ with $\zeta\in \mathfrak s\mathfrak l_a$ and $z\in\Bbbk$  by \ref{decompgl} with $2az+by=0$.

We then obtain that, for every $t\in{\rm Lie}(\Bbb T_b)\cap\mathfrak s\mathfrak l_b$, $y\in\Bbbk$ and $d\in \g\mathfrak l_{a-b}$ such that ${\rm tr}(d)=-\frac{3b}{2}y$ :
$$(\zeta,\,t,\,\zeta)+(z{\rm I}_a, y{\rm I}_b,\,z{\rm I}_a)\in (\mathfrak r'\oplus\h_{\Lambda})_{\rm Z_C}$$
since 
\[\begin{cases}(\zeta,\,t,\,\zeta)\in\mathfrak s\mathfrak l_a\times\mathfrak s\mathfrak l_b\times\mathfrak s\mathfrak l_a=\mathfrak r'\\
 (z{\rm I}_a, y{\rm I}_b,\,z{\rm I}_a)=-\frac{b}{2a}y{\rm H}_{\Lambda}\in\h_{\Lambda}.
\end{cases}\]

It follows that, in this case, we have:
\begin{align*}\dim(\mathfrak r'\oplus\h_{\Lambda})_{\rm Z_C}=&\dim ({\rm Lie}(\Bbb T_b))+\dim\g\mathfrak l_{a-b}-1 \\
=&b+(a-b)^2-1=a^2+b^2-2ab+b-1\\
{\rm and \;\;\;\;index}\,(\mathfrak r'\oplus\h_{\Lambda})_{\rm Z_C}=&{\rm index}\,({\rm Lie}(\Bbb T_b))+{\rm index}\,\g\mathfrak l_{a-b}-1\\
=&b+a-b-1=a-1\;\;\hbox{\rm by \ref{indgl}.}\\
\end{align*} 

Finally we obtain in this case, by Eq. \ref{index2}, that :

\begin{align*}
&{\rm index}\,\widetilde{\p_{\Lambda}}\\
=&a-1+2ab+a^2-(2(a^2-1)+b^2-1+1)+a^2+b^2-2ab+b-1\\
=&a+b=s+n-2s=n-s={\rm index}\,\p_{\Lambda}\;{\rm by\; Eq.\; \ref{indexcl}.}\\
\end{align*}

For case II, when $a<b$,  in the same manner we obtain that for every $t\in{\rm Lie}(\Bbb T_a)\cap\mathfrak s\mathfrak l_a$, $y\in\Bbbk$ and $d\in\mathfrak g\mathfrak l_{b-a}$ such that ${\rm tr}(d)=-3ay$:

writing \[\left[\begin{array}{c|c}
t+y{\rm I}_a&0_{a,\,b-a}\\
\hline
0_{b-a,\,a}&d\\
\end{array}\right]\in\g\mathfrak l_b\] as $\zeta+z{\rm I}_b$
with $\zeta\in\mathfrak s\mathfrak l_b$, $z\in\Bbbk$ by \ref{decompgl} and $z=-\frac{2a}{b}y$,

that
$$(t,\,\zeta,\,t)+\bigl(y{\rm I}_a,\,z{\rm I}_b,\,y{\rm I}_a\bigr)\in(\mathfrak r'\oplus\h_{\Lambda})_{\rm Z_C}.$$

Hence, in this case, we have :

\begin{align*}\dim(\mathfrak r'\oplus\h_{\Lambda})_{\rm Z_C}=&\dim ({\rm Lie}(\Bbb T_a))+\dim\g\mathfrak l_{b-a}-1 \\
=&a+(b-a)^2-1=a^2+b^2-2ab+a-1\\
{\rm and\;\;\;index}\,(\mathfrak r'\oplus\h_{\Lambda})_{\rm Z_C}=&{\rm index}\,({\rm Lie}(\Bbb T_a))+{\rm index}\,\g\mathfrak l_{b-a}-1\\
=&a+b-a-1=b-1\;\;\hbox{\rm by \ref{indgl}}\\
\end{align*} 

Finally we obtain in this case, by Eq. \ref{index2}, that :
\begin{align*}
&{\rm index}\,\widetilde{\p_{\Lambda}}\\
=&b-1+2ab+a^2-(2(a^2-1)+b^2-1+1)+a^2+b^2-2ab+a-1\\
=&a+b=s+n-2s=n-s={\rm index}\,\p_{\Lambda}\;{\rm by\; Eq. \;\ref{indexcl}}.\\
\end{align*}

Finally for case III, when $a=b$, the generic stabilizer $\mathfrak r_{\rm Z_C}$ consists of the following matrices 

\[\left[
 \begin{array}{c|c|c}
 t+y{\rm I}_a&0_{a}&0_a\\\hline
 0_{a}&t+y{\rm I}_a&0_a\\
\hline
0_{a}&0_{a}&t+y{\rm I}_a\\
 \end{array}
 \right]
 \]
for every $t\in{\rm Lie}(\Bbb T_a)\cap\mathfrak s\mathfrak l_a$ and $y\in\Bbbk$, with $3ay=0$. Hence $y=0$ and we have the equality of stabilizers
\begin{equation}\mathfrak r_{\rm Z_C}=\mathfrak r'_{\rm Z_C}=(\mathfrak r'\oplus\h_{\Lambda})_{\rm Z_C}.\label{stabeq}\end{equation}

Finally we obtain in this case, by Eq. \ref{index2}, that :
\begin{align*}
{\rm index}\,\widetilde{\p_{\Lambda}}=&a-1+3a^2-(3(a^2-1)+1)+a-1\\
=&2a=2s=n-s={\rm index}\,\p_{\Lambda}\;{\rm by\; Eq. \;\ref{indexcl}.}\\
\end{align*}  
This completes the proof.
\end{proof}

\section{The canonical truncation of the contraction.}
We continue with the same hypotheses and notation as in the previous Section. 

\subsection{}\label{twoposs}
Recall that $\widetilde\p_{\Lambda}$ denotes the canonical truncation of the contraction $\widetilde\p=\mathfrak r\ltimes\m^a$ of $\p=\mathfrak r\oplus\m$ and that $\widetilde\p'=\mathfrak r'\ltimes\m^a$. Recall also that $\widetilde{\p_{\Lambda}}=(\mathfrak r'\oplus\h_{\Lambda})\ltimes\m^a$ denotes the contraction of $\p_{\Lambda}=(\mathfrak r'\oplus\h_{\Lambda})\oplus\m$. We have that
$\widetilde\p_{\Lambda}=\widetilde\p'\oplus\widetilde\h_{\Lambda}$ where
$\widetilde\h_{\Lambda}$ is a vector subspace of $\h_{\Lambda}$ since $\widetilde\p_{\Lambda}$ is the canonical truncation ${(\widetilde{\p_{\Lambda}})}_{\Lambda}$ of the contraction $\widetilde{\p_{\Lambda}}$ by \cite[Lem. 2.6.4]{F01}.
Hence only two possibilities may occur :
$$\widetilde\p_{\Lambda}=\widetilde{\p_{\Lambda}}=\widetilde\p'\oplus\h_{\Lambda} \;{\rm or}\;\;\widetilde\p_{\Lambda}=\widetilde\p'$$  since moreover $\dim\h_{\Lambda}=1$.
\subsection{}
Lemma \ref{indexindex1} implies, when $3s\neq n$, namely when the Levi factor of $\p$ consists in three symmetric blocks, with the central block not of the same size as the two extremal blocks, that the canonical truncation $\widetilde\p_{\Lambda}$ of the contraction $\widetilde\p$ is equal to the contraction $\widetilde{\p_{\Lambda}}$.

\begin{lm}\label{casdiff}

When $3s\neq n$, we have:
$$\widetilde\p_{\Lambda}=\widetilde{\p_{\Lambda}}.$$ 
\end{lm}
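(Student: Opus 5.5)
The argument compares Gelfand--Kirillov dimensions, using the dichotomy of subsection \ref{twoposs}. There we saw that $\widetilde\p_{\Lambda}$ is either $\widetilde{\p_{\Lambda}}=\widetilde\p'\oplus\h_{\Lambda}$ or $\widetilde\p'$. So it suffices to rule out $\widetilde\p_{\Lambda}=\widetilde\p'$ when $3s\neq n$. Assume, for a contradiction, that $\widetilde\p_{\Lambda}=\widetilde\p'$.

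By Eq. \ref{semitrunc} and Eq. \ref{GKdimindex}, applied to $\a=\widetilde\p$, we have
$${\rm GKdim}\,Sy\bigl(\widetilde\p\bigr)={\rm GKdim}\,Y\bigl(\widetilde\p_{\Lambda}\bigr)={\rm index}\,\widetilde\p_{\Lambda}.$$
Under our assumption this equals ${\rm index}\,\widetilde\p'$, which is ${\rm index}\,\p_{\Lambda}-1=n-s-1$ by Lemma \ref{indexindex1} (since $3s\neq n$) and Lemma \ref{indexp}.

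Next we need a lower bound for ${\rm GKdim}\,Sy\bigl(\widetilde\p\bigr)$ strictly larger than this. Note that $\widetilde{\p_{\Lambda}}$ is an ideal of $\widetilde\p$ containing $\widetilde\p'$, since $\h_{\Lambda}\subset\h$ and $[\widetilde\p,\widetilde\p]=\widetilde\p'$. By the same reasoning as in \cite[Lem. 2.6.4]{F01}, quoted in subsection \ref{twoposs}, $\widetilde\p_{\Lambda}$ is the canonical truncation of $\widetilde{\p_{\Lambda}}$, and $Sy\bigl(\widetilde\p\bigr)=Y\bigl(\widetilde\p_{\Lambda}\bigr)=Sy\bigl(\widetilde{\p_{\Lambda}}\bigr)$.

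The cleanest route is then to show that $Y\bigl(\widetilde{\p_{\Lambda}}\bigr)\subset Sy\bigl(\widetilde\p\bigr)$. Take $f\in Y\bigl(\widetilde{\p_{\Lambda}}\bigr)$ and decompose it into $\h$-weight components. Each component is still $\widetilde{\p_{\Lambda}}$-invariant, because $\h$ normalizes $\widetilde{\p_{\Lambda}}$. Since $\widetilde\p=\widetilde{\p_{\Lambda}}+\h$, each component is then a semi-invariant for $\widetilde\p$. Hence $Y\bigl(\widetilde{\p_{\Lambda}}\bigr)\subset Sy\bigl(\widetilde\p\bigr)$, and so
$${\rm GKdim}\,Sy\bigl(\widetilde\p\bigr)\ge{\rm GKdim}\,Y\bigl(\widetilde{\p_{\Lambda}}\bigr).$$

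It remains to identify the right-hand side. By Chevalley--Dixmier in the form of Eq. \ref{GKdimindex}, ${\rm GKdim}\,Y(\q)\le{\rm index}\,\q$ for any algebraic $\q$, with equality when $\q$ equals its own canonical truncation. This is where I expect the main obstacle: we need ${\rm GKdim}\,Y\bigl(\widetilde{\p_{\Lambda}}\bigr)={\rm index}\,\widetilde{\p_{\Lambda}}$. I would get this from the known inequality ${\rm index}\,\q_{\Lambda}\ge{\rm index}\,\q-\dim(\q/\q_{\Lambda})$ together with the truncation dichotomy.

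Suppose instead that the truncation of $\widetilde{\p_{\Lambda}}$ were $\widetilde\p'$, so that $\widetilde{\p_{\Lambda}}$ is not its own truncation. Then $\widetilde{\p_{\Lambda}}$ has a semi-invariant of nonzero weight $\lambda$ with $\lambda(\h_{\Lambda})\neq0$. Counting then gives
$${\rm index}\,\widetilde\p'={\rm index}\,\widetilde{\p_{\Lambda}}+1=n-s+1,$$
by the standard relation \cite[Rem. 2.4.1]{F01} between the index of $\q$ and of its truncation when the truncation has codimension one. This contradicts Lemma \ref{indexindex1}, which gives $n-s-1$, and Lemma \ref{indexindex2}, which gives ${\rm index}\,\widetilde{\p_{\Lambda}}=n-s$. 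Indeed, in general ${\rm index}\,\q_{\Lambda}=\dim\q_{\Lambda}-(\dim\q-{\rm index}\,\q)+\dim(\q/\q_{\Lambda})$ reasoning forces ${\rm index}\,\q_{\Lambda}\ge{\rm index}\,\q-1$ when the codimension is one, which already clashes with $n-s-1<n-s-1+1$ only if we use the sharper equality; I would verify this step carefully.

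Once it is established, the index drop by one from $\widetilde{\p_{\Lambda}}$ to $\widetilde\p'$, i.e. ${\rm index}\,\widetilde\p'={\rm index}\,\widetilde{\p_{\Lambda}}-1$, is incompatible with $\widetilde\p'$ being the canonical truncation. For a proper truncation, passing to a codimension-one ideal $\q_{\Lambda}$ cut out by the weights raises the index, since generic stabilizers $\q_\xi\subset\q_{\Lambda}$. Hence $\widetilde\p_{\Lambda}\neq\widetilde\p'$, and so $\widetilde\p_{\Lambda}=\widetilde{\p_{\Lambda}}$.
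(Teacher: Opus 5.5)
Once the detours are removed, your argument is the paper's. Under the hypothesis, $\widetilde\p'$ is the canonical truncation of $\widetilde{\p_{\Lambda}}$, and the index of a canonical truncation is at least that of the algebra; hence ${\rm index}\,\widetilde\p'\ge{\rm index}\,\widetilde{\p_{\Lambda}}=n-s$ (Lemma \ref{indexindex2}), contradicting Lemma \ref{indexindex1}. The paper gets the index inequality by quoting $\dim\q_{\Lambda}+{\rm index}\,\q_{\Lambda}=\dim\q+{\rm index}\,\q$ from \cite{F01}. Your observation that $\q_\xi\subset\q_{\Lambda}$ for generic $\xi$ gives it directly, since $\q_\xi$ then lies in the kernel of $\xi([\,\cdot\,,\,\cdot\,])$ restricted to $\q_{\Lambda}$.

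The Gelfand--Kirillov detour is unnecessary. The displayed formula for ${\rm index}\,\q_{\Lambda}$ should also be discarded: it simplifies to ${\rm index}\,\q_{\Lambda}={\rm index}\,\q$, which is false for a proper truncation.
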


\begin{proof}
By \cite[Lem. 2.6.4 and Proof of Lem. 3.6.1, Eq. (42)]{F01} we have:
\begin{equation*} \dim\widetilde\p_{\Lambda}+{\rm index}\,\widetilde\p_{\Lambda}=\dim\widetilde{\p_{\Lambda}}+{\rm index}\,\widetilde{\p_{\Lambda}}\end{equation*}
Moreover the index cannot decrease under contraction, see \cite[Sec. 4]{Y1}. 
Then one has that:
\begin{equation*} {\rm index}\,\widetilde\p_{\Lambda}\ge {\rm index}\,\p_{\Lambda}+\dim\widetilde{\p_{\Lambda}}-\dim\widetilde\p_{\Lambda}\end{equation*}
By \cite[Lem. 2.6.4]{F01} one has that:
$$\widetilde\p_{\Lambda}={(\widetilde{\p_{\Lambda}})}_{\Lambda}\subset \widetilde{\p_{\Lambda}}.$$ 
It follows that $${\rm index}\,\widetilde\p_{\Lambda}\ge{\rm index}\,\p_{\Lambda}.$$
Hence we cannot have that $\widetilde\p_{\Lambda}=\widetilde\p'$ when $3s\neq n$
by Lemma \ref{indexindex1}. Then subsection \ref{twoposs} completes the proof in this case.
\end{proof}

\section{Construction of an adapted pair for $\widetilde{\p_{\Lambda}}$.}\label{constructAP}

We continue with the same hypotheses and notation as in the previous Section with $2s<n$ (we do not assume that $3s\neq n$). Here we will construct an adapted pair for the In\"on\"u-Wigner contraction $\widetilde{\p_{\Lambda}}$ of $\p_{\Lambda}$ and we will deduce that the algebra of invariants $Y\bigl(\widetilde{\p_{\Lambda}}\bigr)$ is a polynomial algebra over $\Bbbk$.

\subsection{}
To construct an adapted pair for the contraction $\widetilde{\p_{\Lambda}}=(\mathfrak r'\oplus\h_{\Lambda})\ltimes\m^a$ of $\p_{\Lambda}$, we will decompose the set of roots $\Delta(\pi')=\Delta^+\sqcup\Delta^-_{\pi'}$ of $(\p,\,\h)$ (see subsection \ref{rootsp}) into socalled Heisenberg sets. 

Recall some results in \cite{F01}.

\begin{defi}{(\cite[Def. 4.1.1]{F01})} Let $\gamma\in\Delta$. An Heisenberg set $\Gamma_{\gamma}$ with centre $\gamma$ is a subset of $\Delta$ containing $\gamma$ verifying that, for every $\alpha\in\Gamma_{\gamma}\setminus\{\gamma\}$, there exists $\alpha'\in\Gamma_{\gamma}\setminus\{\gamma\}$ (which is unique) such that $\alpha+\alpha'=\gamma$.  We denote such an $\alpha'$ by $\theta(\alpha)$. We set $\Gamma_{\gamma}^0=\Gamma_{\gamma}\setminus\{\gamma\}$. 
\end{defi}

We define the formal character ${\rm ch}\,M$ of any $\h$-module $M=\bigoplus_{\nu\in\h^*}M_{\nu}$ with finite-dimensional weight subspaces $M_{\nu}:=\{m\in M;\;\forall h\in\h,\,h.m=\nu(h)m\}$
by $${\rm ch}\,M=\sum_{\nu\in\h^*}\dim M_{\nu}\,e^{\nu}$$ where $e^{\mu}e^{\nu}=e^{\mu+\nu}$ for $\mu,\,\nu\in\h^*$ and
we write, for two such $\h$-modules $M,\,N$, ${\rm ch\,M}\le{\rm ch}\,N$ whenever, for all $\nu\in\h^*$, $\dim M_{\nu}\le\dim N_{\nu}$.

We recall the following Lemma, which is a slight refinement of \cite[Lem. 4.2.1]{F01} (itself inspired by \cite[8.6]{J5} and by \cite[Lem. 6.11]{J6bis}).

\begin{lm}{(\cite[Lem. 4.2.1]{F01})}\label{lmAP}
Assume that there exist disjoint subsets of $\Delta(\pi')$ : $S,\,T,\,\Gamma^0_{\gamma}$, for $\gamma\in S$, such that $\Gamma_{\gamma}=\Gamma^0_{\gamma}\sqcup\{\gamma\}$ is an Heisenberg set with centre $\gamma$. Set $O=\bigsqcup_{\gamma\in S}\Gamma^0_{\gamma}$. Then $\theta:O\longrightarrow O$ is an involution. Set $$y=\sum_{\gamma\in S}x_{-\gamma}\in\widetilde{\p_{\Lambda}}^*.$$
Denote by $\widetilde\Phi_y$ the skew-symmetric bilinear form on $\widetilde{\p_{\Lambda}}\times\widetilde{\p_{\Lambda}}$ defined by $$\widetilde\Phi_y(x,\,x')=K(y,\,[x,\,x']_{\widetilde\p})$$ for all $x,\,x'\in\widetilde{\p_{\Lambda}}$, where $K$ is the Killing form on $\g$. Set also $\g_O=\bigoplus_{\alpha\in O}\Bbbk x_{\alpha},\;\g_{-T}=\bigoplus_{\alpha\in T}\Bbbk x_{-\alpha}$.\smallskip

Assume further that:
\begin{enumerate}
\item[(i)] $S_{\mid \h_{\pi'}\oplus\h_{\Lambda}}$ is a basis for $(\h_{\pi'}\oplus\h_{\Lambda})^*$. \smallskip

\item[(ii)] $\Delta(\pi')=\bigsqcup_{\gamma\in S}\Gamma_{\gamma}\sqcup T.$\smallskip

\item[(iii)] $\lvert T\rvert={\rm index}\,\p_{\Lambda}.$ \smallskip

\item[(iv)]
 The restriction of $\widetilde\Phi_y$ to $\g_O\times\g_O$ is nondegenerate.
\end{enumerate}
Then
\begin{equation}{\rm ad}^*\widetilde{\p_{\Lambda}}(y)\oplus\g_{-T}=\widetilde{\p_{\Lambda}}^*\label{eg}\end{equation}
where ${\rm ad}^*$ denotes the coadjoint action of $\widetilde{\p_{\Lambda}}$ on its dual space. This means that $y$ is regular in $\widetilde{\p_{\Lambda}}^*$ and if we denote by $h\in\h_{\pi'}\oplus\h_{\Lambda}$ such that $\gamma(h)=1$ for all $\gamma\in S$, then $(h,\,y)$ is an adapted pair for $\widetilde{\p_{\Lambda}}$.

Moreover for all $\gamma\in T$, denote by $t(\gamma)$ the unique element in $\mathbb QS$ such that $\gamma+t(\gamma)$ vanishes on $\h_{\pi'}\oplus\h_{\Lambda}$. Then if $\gamma+t(\gamma)\neq 0$ and $t(\gamma)\in\mathbb NS$, for all $\gamma\in T$, one has that:
\begin{equation}{\rm ch}\,Y\bigl(\widetilde{\p_{\Lambda}}\bigr)\le\prod_{\gamma\in T}\bigl(1-e^{\gamma+t(\gamma)}\bigr)^{-1}.\label{upperbound}\end{equation}

Finally, if equality holds in the above inequality, then restriction of functions gives the algebra isomorphism:
\begin{equation}Y\bigl(\widetilde{\p_{\Lambda}}\bigr)\xrightarrow{\sim}\Bbbk[y+\g_{-T}]\label{WS}\end{equation}
which means that $y+\g_{-T}$ is a Weierstrass section for $Y\bigl(\widetilde{\p_{\Lambda}}\bigr)$ and $Y\bigl(\widetilde{\p_{\Lambda}}\bigr)$ is a polynomial $\Bbbk$-algebra.
\end{lm}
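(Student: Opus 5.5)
Before invoking the adapted-pair machinery, I would check that $\theta$ is an involution of $O$. For $\alpha\in\Gamma^0_\gamma$ the partner $\theta(\alpha)=\gamma-\alpha$ lies in $\Gamma^0_\gamma$, and $\gamma-(\gamma-\alpha)=\alpha$. Since the sets $\Gamma_\gamma$ are disjoint, $\theta$ is well defined on $O$ and $\theta^2=\mathrm{id}$.

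\textbf{Step 1: the decomposition (\ref{eg}).} The strategy is a dimension count plus a triangularity argument, following \cite[8.6]{J5}. By (ii), $\widetilde{\p_{\Lambda}}$ has basis $\h_{\pi'}\oplus\h_{\Lambda}$ together with the root vectors $x_\alpha$, $\alpha\in\Delta(\pi')$. Hence
$$\dim\widetilde{\p_{\Lambda}}=\lvert S\rvert+\lvert S\rvert+\lvert O\rvert+\lvert T\rvert,$$
using (i) for $\dim(\h_{\pi'}\oplus\h_{\Lambda})=\lvert S\rvert$. I would show that ${\rm ad}^*\widetilde{\p_{\Lambda}}(y)+\g_{-T}=\widetilde{\p_{\Lambda}}^*$. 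Then
$$\dim{\rm ad}^*\widetilde{\p_{\Lambda}}(y)\ge\dim\widetilde{\p_{\Lambda}}-\lvert T\rvert=\dim\widetilde{\p_{\Lambda}}-{\rm index}\,\widetilde{\p_{\Lambda}},$$
by (iii) and Lemma \ref{indexindex2}. Since the orbit dimension is always at most this bound, equality holds, the sum is direct, and $y$ is regular.

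To prove the spanning, identify $\widetilde{\p_{\Lambda}}^*$ with $(\h_{\pi'}\oplus\h_{\Lambda})^*\oplus\bigoplus_{\alpha\in\Delta(\pi')}\Bbbk x_{-\alpha}$. Three families of elements do the work:
\begin{itemize}
\item For $h'\in\h_{\pi'}\oplus\h_{\Lambda}$, ${\rm ad}^*h'(y)=-\sum_{\gamma\in S}\gamma(h')x_{-\gamma}$. By (i) these span $\bigoplus_{\gamma\in S}\Bbbk x_{-\gamma}$.
\item For $\gamma\in S$, ${\rm ad}^*x_\gamma(y)$ has its component on $\h^*$ equal to the functional $h\mapsto K(x_{-\gamma},[x_\gamma,h])$ restricted to $\h_{\pi'}\oplus\h_{\Lambda}$, up to components along other $x_{-\beta}$. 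These $\gamma^\vee$-type functionals must span $(\h_{\pi'}\oplus\h_{\Lambda})^*$ modulo root directions. This is where I expect to use the fact that, in the contraction, brackets $[x_\gamma,x_{-\gamma}]_{\widetilde\p}$ with $\gamma\in\Delta^+\setminus\Delta^+_{\pi'}$ vanish. So I would use instead the pairing $K(y,[x_\gamma,h])=\gamma(h)$ from the transposed viewpoint: the matrix of $\widetilde\Phi_y$ on $(\h_{\pi'}\oplus\h_{\Lambda})\times\bigoplus_{\gamma\in S}\Bbbk x_\gamma$ is $(\gamma(h_i))$, which is invertible by (i).
\item By (iv), the block on $\g_O\times\g_O$ is nondegenerate.
\end{itemize}
I would therefore phrase the whole argument as the rank of $\widetilde\Phi_y$. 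Order the basis as $\g_S,\ (\h_{\pi'}\oplus\h_{\Lambda}),\ \g_O,\ \g_T$, and check that the Gram matrix restricted to the first three groups is nondegenerate by a block-triangular argument. The pairings of $\h$ with $\g_O$ vanish because $y$ has no component on roots of $O$ (the sets are disjoint). The pairings of $\g_S$ with $\g_O$ are the only off-diagonal terms that could obstruct, but they sit in a triangular position. Nondegeneracy of this block then gives $\operatorname{rank}\widetilde\Phi_y\ge\dim\widetilde{\p_{\Lambda}}-\lvert T\rvert$, and the equality with the index yields both regularity and (\ref{eg}), by transversality of $\g_{-T}$ to the image. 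The relation ${\rm ad}\,h(y)=-y$ is immediate from $\gamma(h)=1$ for all $\gamma\in S$, and $h$ is ad-semisimple since it lies in $\h$.

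\textbf{Step 2: the bound (\ref{upperbound}) and the Weierstrass section.} By (\ref{eg}), restriction of functions $Y(\widetilde{\p_{\Lambda}})\to\Bbbk[y+\g_{-T}]$ is injective: an invariant vanishing on $y+\g_{-T}$ vanishes on the dense set $\widetilde P.(y+\g_{-T})$. Now let $\h_{\pi'}\oplus\h_{\Lambda}$ act on $y+\g_{-T}$, twisting by the torus element that rescales $y$ back. A coordinate $x_\gamma$ with $\gamma\in T$ then acquires the $\h$-weight $\gamma+t(\gamma)$ after compensating by $\mathbb NS$. Weights of invariants vanish on $\h_{\pi'}\oplus\h_{\Lambda}$, so the image lies in a polynomial ring whose character is $\prod_{\gamma\in T}(1-e^{\gamma+t(\gamma)})^{-1}$. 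The hypotheses $t(\gamma)\in\mathbb NS$ and $\gamma+t(\gamma)\neq0$ guarantee that this character has finite-dimensional weight spaces and that the comparison is legitimate. This gives (\ref{upperbound}). If equality holds, the injection is surjective, giving (\ref{WS}).

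The main obstacle is the triangularity in Step 1. The contraction kills many brackets, so the pairings between $\g_S$ and $\g_O$, and the $\h$-components, must be checked directly in the contracted bracket rather than imported from \cite[8.6]{J5}.
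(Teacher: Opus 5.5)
Your proposal is correct and is essentially the argument of \cite[Lem. 4.2.1]{F01} (after \cite[8.6]{J5} and \cite[Lem. 6.11]{J6bis}), which the paper cites rather than reproves: $\widetilde\Phi_y$ is nondegenerate on $(\h_{\pi'}\oplus\h_{\Lambda})\oplus\bigoplus_{\gamma\in S}\Bbbk x_{\gamma}\oplus\g_O$, the annihilator of $\g_{-T}$, by triangularity from (i) and (iv), which gives Eq. \ref{eg}; restricting invariants to $y+\g_{-T}$, with the full $\h$ acting (not $\h_{\pi'}\oplus\h_{\Lambda}$, as you wrote) after a correction in $\h_{\pi'}\oplus\h_{\Lambda}$ that fixes $y$, then gives the bound and the Weierstrass section. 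The finiteness of weight spaces that you assert without proof does hold, by a one-line argument you should record: take $h_0\in\h$ with $\alpha(h_0)=1$ on $\pi\setminus\pi'$ and $\alpha(h_0)=0$ on $\pi'$; every root in $\Delta(\pi')$ is nonnegative on $h_0$, so $t(\gamma)\in\mathbb NS$ gives $(\gamma+t(\gamma))(h_0)\in\mathbb N$, and this integer is nonzero because otherwise $\gamma+t(\gamma)$ would lie in $\mathbb Z\pi'$ and vanish on $\h_{\pi'}$, forcing $\gamma+t(\gamma)=0$.
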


\begin{Rq}
 By Lemma \ref{indexindex2}, Eq. \ref{eg} implies that the element $y\in\widetilde{\p_{\Lambda}}^*$ is regular. Actually by \cite[Proof of Lem. 3.6.1]{F01} or an analogue of it,  we can easily check that Eq. \ref{eg} also implies that $y$ is regular without  Lemma \ref{indexindex2}, since $\lvert T\rvert={\rm index}\,\p_{\Lambda}=\dim\g_{-T}$ and since the index cannot decrease under contraction (moreover a direct sum is not needed in Eq. \ref{eg}, only a sum is needed which is then direct by  subsection \ref{reg}). Thus the result of Lemma \ref{indexindex2}  would also have been be obtained as a consequence of Eq. \ref{eg}. 
\end{Rq}
The construction of the disjoint Heisenberg sets is inspired by the Heisenberg sets constructed in \cite{J1} from the Kostant cascade of a semisimple Lie algebra. We recall in the following subsection some useful results in \cite{J1}.

\subsection{Some results on Heisenberg sets constructed from the Kostant cascade.}\label{Kcasc}

 Let us recall some results in \cite{J1} for $\g$ simple of type ${\rm A}_{n-1}$ with $n\ge 2$ (with a fixed Cartan subalgebra $\h$, a root system $\Delta$ of $(\g,\h)$ and a chosen set of simple roots $\pi$).  To the Kostant cascade $\mathscr K(\g)=\{1,\,2,\,\ldots,\,[n/2]\}$ of $\g$ is attached the set of strongly orthogonal positive roots  $\beta_i=\ep_i-\ep_{n+1-i}$, for $i\in\mathscr K(\g)$, such that $\beta_1=\varpi_1+\varpi_{n-1}$ is the highest root of $\Delta_1:=\Delta$, $\beta_2$ is the highest root of $\Delta_2:=\{\alpha\in\Delta_1\mid (\alpha,\,\beta_1)=0\}$ where $(\,,\,)$ is the bilinear form defined in subsection \ref{notstand}, and so on. 
 With every positive root $\beta_i$, $i\in\mathscr K(\g)$, is associated an Heisenberg set ${\rm H}_{\beta_i}$ with centre $\beta_i$, included in $\Delta^+_i:=\Delta_i\cap\Delta^+$, such that 
 $${\rm H}_{\beta_i}=\{\gamma\in\Delta_i\mid (\gamma,\,\beta_i)>0\}.$$
 
 For example, ${\rm H}_{\beta_1}=\{\beta_1,\,\ep_1-\ep_j,\,\ep_j-\ep_n;\,2\le j\le n-1\}.$
 
 Recall the following very useful Lemma. 
 
 \begin{lm}{(\cite[Lem. 2.2]{J1})}\label{strong}
 We keep the above notation and hypotheses. Then:
 \begin{enumerate}
\item[{\rm (i)}] The set  $\Delta^+$ of positive roots is the disjoint union of the ${\rm H}_{\beta_i}$ for $i\in\mathscr K(\g)$. 
\item[{\rm (ii)}] For every $i,\,j\in\mathscr K(\g)$, with $i\le j$, given $\gamma\in {\rm H}_{\beta_i}$, $\delta\in {\rm H}_{\beta_j}$, then $\gamma+\delta\in\Delta$ implies that $\gamma+\delta\in {\rm H}_{\beta_i}$.
\item[{\rm (iii)}] Set ${\rm H}^0_{\beta_i}={\rm H}_{\beta_i}\setminus\{\beta_i\}$. If $\gamma\in {\rm H}^0_{\beta_i}$ then $\beta_i-\gamma\in {\rm H}^0_{\beta_i}$. \par\noindent
It follows that:
\item[{\rm (iv)}] For every $i,\,j\in\mathscr K(\g)$, with $i\le j$, given $\gamma\in {\rm H}^0_{\beta_i}$, $\delta\in {\rm H}^0_{\beta_j}$ such that there exists $k\in\mathscr K(\g)$ for which $\gamma+\delta=\beta_k$. Then $k=i$ and $\delta=\beta_i-\gamma\in {\rm H}^0_{\beta_i}$.
 \end{enumerate}
 
 \end{lm}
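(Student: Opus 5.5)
The plan is to make everything explicit in type ${\rm A}_{n-1}$, where the cascade is completely described by indices, and to check (i)--(iii) by direct inspection; (iv) will then be a formal consequence of (i)--(iii).

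First I would describe the sets. Since $\beta_k=\ep_k-\ep_{n+1-k}$ and the form $(\,,\,)$ of subsection \ref{notstand} is the standard one, an induction on $i$ gives
$$\Delta_i=\{\ep_p-\ep_q\mid p\neq q,\ p,q\in[i,\,n+1-i]\}.$$
Indeed, $(\ep_p-\ep_q,\beta_{i-1})=0$ for such roots of $\Delta_{i-1}$ exactly when $p,q\notin\{i-1,\,n+2-i\}$. Hence $\beta_i$ is the highest root of $\Delta_i$, and
$$\mathrm H_{\beta_i}=\{\beta_i\}\sqcup\{\ep_i-\ep_j,\ \ep_j-\ep_{n+1-i}\mid i<j<n+1-i\}.$$
To each positive root $\ep_p-\ep_q$ (with $p<q$) I attach its level $\ell(\ep_p-\ep_q)=\min(p,\,n+1-q)$. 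With this notation, $\mathrm H_{\beta_i}$ is exactly the set of positive roots of level $i$: such a root has $p=i$ and $q\le n+1-i$, or $q=n+1-i$ and $p\ge i$. Since the level of a positive root is a well-defined integer in $\mathscr K(\g)$, this gives (i), both the covering and the disjointness. Assertion (iii) is immediate from the formula: $\beta_i-(\ep_i-\ep_j)=\ep_j-\ep_{n+1-i}$, and conversely.

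For (ii), let $\gamma\in\mathrm H_{\beta_i}$ and $\delta\in\mathrm H_{\beta_j}$ with $i\le j$ and $\gamma+\delta\in\Delta$. Both roots are positive, so after possibly swapping we have $\{\gamma,\delta\}=\{\ep_p-\ep_q,\ \ep_q-\ep_r\}$ with $p<q<r$, and $\gamma+\delta=\ep_p-\ep_r$.

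The level is non-increasing under this addition. Indeed, $\min(p,\,n+1-r)$ is at most both $\min(p,\,n+1-q)$ and $\min(q,\,n+1-r)$. Hence $\ell(\gamma+\delta)\le\min(i,\,j)=i$.

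For the reverse inequality I would use that $\delta\in\Delta_j\subset\Delta_i$ and $\gamma\in\Delta_i$. Then all three indices $p,q,r$ lie in $[i,\,n+1-i]$, so $p\ge i$ and $n+1-r\ge i$, which gives $\ell(\ep_p-\ep_r)\ge i$. Thus $\gamma+\delta$ has level exactly $i$, that is, $\gamma+\delta\in\mathrm H_{\beta_i}$.

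The only point needing care here is the bookkeeping of which summand sits on the outside, which is why I prefer the symmetric argument through $\Delta_i$ rather than a case split on the shapes of $\gamma$ and $\delta$.

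Finally, (iv) follows formally. Suppose $\gamma+\delta=\beta_k$ with $\gamma\in\mathrm H^0_{\beta_i}$, $\delta\in\mathrm H^0_{\beta_j}$ and $i\le j$. By (ii), $\beta_k\in\mathrm H_{\beta_i}$. Since $\beta_k\in\mathrm H_{\beta_k}$, the disjointness in (i) forces $k=i$. Then $\delta=\beta_i-\gamma$, which lies in $\mathrm H^0_{\beta_i}$ by (iii).

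I expect no real obstacle. The only step requiring attention is the reverse level inequality in (ii), where membership in $\Delta_i$ (rather than merely positivity) must be used.
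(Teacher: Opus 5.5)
Your proof is correct. It differs from the paper, which gives no proof here: it only quotes Joseph \cite[Lem. 2.2]{J1}, where the statement is established for the Kostant cascade of an arbitrary semisimple Lie algebra.

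In that general setting the natural argument is uniform and uses only the pairing with $\beta_i$. The sets $\Delta_i$ are closed, being intersections of $\Delta$ with subspaces. For (ii), if $j>i$ then $\delta\in\Delta_j\subset\beta_i^{\perp}$, so $(\gamma+\delta,\beta_i)=(\gamma,\beta_i)>0$ and $\gamma+\delta\in\Delta_i$. If $j=i$ then $(\gamma+\delta,\beta_i^\vee)\ge 2$, which forces $\gamma+\delta=\beta_i$ because $\beta_i$ is a highest root.

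You replace this with an explicit type-${\rm A}$ computation. The closed formula for $\Delta_i$ and the level $\ell(\ep_p-\ep_q)=\min(p,\,n+1-q)$ identify ${\rm H}_{\beta_i}$ with the positive roots of level $i$. This makes (i) transparent, both covering and disjointness; levels lie in $\{1,\ldots,[n/2]\}$ since $p+(n+1-q)\le n$. It also makes the monotonicity in (ii) transparent. Your deduction of (iv) from (i)--(iii) is the right formal argument.

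Your route is self-contained and elementary, at the cost of generality. That costs nothing in this paper: the lemma is only applied to $\g$ and to the type-${\rm A}$ subalgebras $\mathfrak r'_{1,\,r}$, $\mathfrak r'_{3,\,r}$, $\mathfrak r'_{2,\,r}$, $\mathfrak r'_{2,\,r^-}$, where your computation applies verbatim after relabelling indices.
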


\subsection{Our choice of  Heisenberg  sets.}\label{ST}

We have to distinguish between the case when $n\le 3s$ and the case when $n>3s$. Recall that we have assumed that $2s<n$ (see subsection \ref{rootsp}) and that we have set $a=s$ and $b=n-2s$ which are respectively the sizes of the extremal and central blocks of the Levi factor $\mathfrak r$ of $\p$.
For any real number $x$, denote by $[x]$ the unique integer such that
$$[x]\le x<[x]+1.$$

Recall that we have set $c=\min(a,\,b)$.
Set \begin{align*}\beta_i=\ep_i-\ep_{n+1-i},\;\forall 1\le i\le s,\;\gamma_{s-j}=\ep_{s-j}-\ep_{n-s-j},\,\forall 0\le j\le c-1\\
\;\delta_k=\ep_{n-s-k}-\ep_{n-s+k},\;\forall 1\le k\le c.\end{align*}

The roots $\beta_i$, for $1\le i\le s$, are the  first $s$ strongly orthogonal positive roots corresponding to  the Kostant cascade of $\g$ (see subsection \ref{Kcasc}).

If $2s<n\le 3s\iff a\ge b$, we set
\begin{align*}\beta'_k=\ep_{2n-3s-1+k}-\ep_{n-k+1},\;\forall 1\le k\le [(3s-n+1)/2]\\
\;\beta''_k=\ep_k-\ep_{3s-n+1-k},\;\forall 1\le k\le [(3s-n)/2].\end{align*}

The roots $\beta'_k$, for $1\le k\le [(3s-n+1)/2]$, correspond to the Kostant cascade of a subalgebra of $\g$ of type ${\rm A}_{3s-n}$ and the roots $\beta''_k$, for $1\le k\le[(3s-n)/2]$, correspond to the Kostant cascade of a subalgebra of $\g$ of type ${\rm A}_{3s-n-1}$ (see explanations below for more details).

If $n>3s\iff a<b$, we set
\begin{align*} \beta'_k=\ep_{s+k}-\ep_{n-2s-k+1},\;\forall 1\le k\le [(n-3s)/2]\\
\beta''_k=\ep_{s+k}-\ep_{n-2s-k},\;\forall 1\le k\le [(n-3s-1)/2].\end{align*}

The roots $\beta'_k$, for $1\le k\le [(n-3s)/2]$, correspond to the Kostant cascade of a subalgebra of $\g$ of type ${\rm A}_{n-3s-1}$ and the roots $\beta''_k$, for $1\le k\le[(n-3s-1)/2]$, correspond to the Kostant cascade of a subalgebra of $\g$ of type ${\rm A}_{n-3s-2}$ (see explanations below for more details).\smallskip

For the set $S$ we set :
\begin{enumerate}
\item if $2s<n\le 3s$, \begin{align*}S=\{\beta_i;\;1\le i\le s-1,\;\gamma_{s-j};\;\;0\le j\le b-1,\\
\;\delta_k;\;1\le k\le b-1,\,-\beta'_{\ell};\;1\le \ell\le [(3s-n+1)/2],-\beta''_{t};\;1\le t\le [(3s-n)/2]\}\end{align*}
\item
 if $n>3s$,
\begin{align*}S=\{\beta_i;\;1\le i\le s-1,\;\gamma_{s-j};\;\;0\le j\le s-1,\\
\;\delta_k;\;1\le k\le s,\,\beta'_{\ell};\;1\le \ell\le  [(n-3s)/2],-\beta''_{t};\;1\le t\le [(n-3s-1)/2]\}.\end{align*}

\end{enumerate}

Let us give now our choice of  Heisenberg sets.\smallskip

Recall that the Levi factor $\mathfrak r$ of $\p$ is isomorphic to $(\g\mathfrak l_a\times\g\mathfrak l_b\times\g\mathfrak l_a)\cap\mathfrak s\mathfrak l_n$, with $a=s$ and $b=n-2s$. Recall also the notation in the previous subsections. Since we want the restriction to $\g_O\times\g_O$ of the bilinear form $\widetilde\Phi_y$ to be nondegenerate, we have to choose the roots in the Heisenberg sets very carefully. Indeed  the Heisenberg sets are required to verify the following {\bf condition {\bf (C)}} (see \cite[4.3]{F01}).

\begin{equation}\forall\,\alpha\in O, \lvert\{\alpha,\,\theta(\alpha)\}\cap(\Delta^+\setminus\Delta^+_{\pi'})\rvert\le 1\tag {{\rm\bf C}}\end{equation}

so that for each root $\alpha\in O$, the set \begin{equation}S_{\alpha}=\{\beta\in O\mid \alpha+\beta\in S\;{\rm and}\;[x_{\alpha},\,x_{\beta}]_{\widetilde\p}\neq 0\}\label{Salph}\end{equation}  at least contains $\theta(\alpha)$. Actually, if for each $\alpha\in O$, $S_{\alpha}=\{\theta(\alpha)\}$, then the restriction to $\g_O\times\g_O$ of the bilinear form $\widetilde\Phi_y$ is nondegenerate, since in this case we obtain, up to a nonzero scalar, that
$$\det({\widetilde\Phi}_{y\mid\g_O\times\g_O})=\prod_{\alpha\in O}K(y,\,[x_{\alpha},\,x_{\theta(\alpha)}]_{\widetilde\p})\neq 0.$$

 Unfortunately this last condition is rarely  fulfilled. (We will see in subsection \ref{nondeg} which sufficient conditions give the required nondegeneracy).

Since $\pi'=\pi\setminus\{\alpha_s,\,\alpha_{n-s}\}$ with $s<n-s$, we can write $\pi'=\pi'_1\sqcup\pi'_2\sqcup\pi'_3$ with $\pi'_1=\{\alpha_1,\,\ldots,\,\alpha_{s-1}\}$, $\pi'_2=\{\alpha_{s+1},\,\ldots,\,\alpha_{n-s-1}\}$ and $\pi'_3=\{\alpha_{n-s+1},\,\ldots,\,\alpha_{n-1}\}$. The root subset of the root system $\Delta$ spanned by $\pi'_i$ ($1\le i\le 3$) will be denoted by $\Delta_{\pi'_i}$ (that is, $\Delta_{\pi'_i}=\Delta\cap\mathbb Z\pi'_i$) and we set $\Delta_{\pi'_i}^\pm=\Delta_{\pi'_i}\cap\Delta^\pm$.
We denote by $\mathfrak r'_i$ the simple Lie subalgebra of the Levi subalgebra $\mathfrak r'$ of $\p$ spanned by root vectors of weight in $\pi'_i$, so that we have that
$$\mathfrak r'=\mathfrak r'_1\times\mathfrak r'_2\times\mathfrak r'_3.$$

The  Levi factor $\mathfrak r$ of $\p$ can be represented by three symmetric (with respect to the antidiagonal) blocks, which we denote by $B_i$ : each $B_i$ (except its diagonal $D_i$) corresponds to $\Delta_{\pi'_i}$ and $B_i^\pm$ corresponds to $\Delta_{\pi'_i}^\pm$.

Observe that the roots in $\Delta^+\setminus\Delta^+_{\pi'}$ (which are the weights of the root vectors in the nilpotent radical $\m$ of $\p$) fill two rectangles $R_1$ and $R_2$ and one square $SQ$. The rectangle $R_1$ is  of height $a$ and width $b$ and lies on the right of the block $B_1$ and above the central block $B_2$. The rectangle $R_2$ is  of height $b$ and of width $a$ and lies on the right of the central block $B_2$ and above the bottom right block $B_3$. The square $SQ$ has its side of length $a$ and lies on the top right.
More precisely we have :
\begin{align*} SQ=\{\ep_i-\ep_j;\;1\le i\le s,\;n-s+1\le j\le n\},\\
\;R_1=\{\ep_i-\ep_j;\;1\le i\le s,\,s+1\le j\le n-s\},\\
\;R_2=\{\ep_i-\ep_j;\;s+1\le i\le n-s,\,n-s+1\le j\le n\}.\end{align*}

Observe that a root $\gamma\in SQ$ is of the form
\begin{equation}\gamma=\cdots+\alpha_s+\cdots+\alpha_{n-s}+\cdots\label{expSQ}\end{equation}

A root $\gamma\in R_1$ is of the form
\begin{equation}\gamma=\cdots+\alpha_s+\cdots\label{expR1}\end{equation}

A root $\gamma\in R_2$ is of the form
\begin{equation}\gamma=\cdots+\alpha_{n-s}+\cdots\label{expR2}\end{equation}

where  there is possibly a sum of simple roots belonging to $\pi'$ in the dotted lines.

In Figures \ref{Fig1} and \ref{Fig2} below, we have drawn with vertical red lines the rectangles $R_1$, $R_2$ and square $SQ$, which represent the nilpotent radical $\m$ of $\p$ and the three blocks $B_i=B_i^-\sqcup B_i^+\sqcup D_i$, $i\in\{1,\,2,\,3\}$, drawn with north east blue lines, which represent the Levi factor $\mathfrak r$ of $\p$. We also have drawn with north west  lines the sets $B_{1,\,r}^-\subset B_1^-$ and $B_{3,\,r}^-\subset B_3^-$ when $a\ge b$ or the sets $B_{2,\,r}^-\subset B_2^-$ and $B_{2,\,r}^+\subset B_2^+$ when $a<b$ of socalled {\it remaining roots} as defined in the description of Heisenberg sets below. Finally we also have indicated the different elements of the set $S\cap(\Delta^+\setminus\Delta^+_{\pi'})$ on the Figures. The rest of elements in $S\cap\Delta_{\pi'}$ are the elements on the antidiagonal of each set of remaining roots, corresponding to (opposite) of the Kostant cascade of each simple Lie algebra spanned by root vectors whose weight belongs to each of these sets.
Recall that we have $2a+b=n$.

\begin{figure}[b]
\caption{The case $a\ge b$.}\label{Fig1} 

\medskip
\begin{center}
\begin{tikzpicture}
\node[anchor=center] at (6.5,3.8) {$R_2$};
\node[anchor=center] at (4.2,6.5) {$R_1$};
\node[anchor=center] at (6.3,6.3) {$SQ$};
\node[anchor=center] at (1,6) {$B_1^-$};
\draw[->, >=latex, thick] (-0.5,7.2)--(0,7.2);
\node at (-0.9,7.2) {$B_{1,\,r}^-$};
\node[anchor=center] at (2,7) {$B_1^+$};
\node[anchor=center] at (5.5,1) {$B_3^-$};
\draw[->, >=latex, thick] (6.9,-0.5)--(6.9,0);
\node at (6.9,-0.8) {$B_{3,\,r}^-$};
\node[anchor=center] at (7,2) {$B_3^+$};
\node[anchor=center] at (4.5,4.5) {$B_2^+$};
\node[anchor=center] at (3.5,3.5) {$B_2^-$};
\node[anchor=center] at (1.5,8.3) {$a$};
\node[anchor=center] at (8.3,6.5) {$a$};
\node[anchor=center] at (4,8.3) {$b$};
\node[anchor=center] at (8.3,4) {$b$};
\node[anchor=center] at (6.5,8.3) {$a$};
\node[anchor=center] at (8.3,1.5) {$a$};
\node[anchor=center] at (7.8,7.8) {$\beta_1$};
\draw (7.5,7.5) rectangle (8,8);
\draw (7,7) rectangle (7.5,7.5);
\node[anchor=center] at (7.3,7.3) {$\beta_2$};
\draw (5,5) rectangle (5.5,5.5);
\node[anchor=center] at (5.2,5.2) {$\beta_a$};
\draw (4.5,5.5) rectangle (5,5);
\node[anchor=center] at (4.8,5.2) {$\gamma_a$};
\draw (3,7) rectangle (3.5,6.5);
\node[anchor=center] at (3.2,6.8) {$\gamma_{d}$};
\draw (5,5)--(3,7);
\draw (5,3.5)--(6.5,5);
\draw (5,3.5) rectangle (5.5,4);
\node[anchor=center] at (5.3,3.7) {$\delta_1$};
\draw (6,4.5) rectangle (6.5,5);
\node[anchor=center] at (6.3,4.7) {$\delta_{b'}$};
\draw (0,0) -- (8,0) ;
\draw (0,0) -- (0,8) ;
\draw (8,0) -- (8,8) ;
\draw (0,8) -- (8,8) ;
\draw (3,3) -- (3,8);
\draw (3,3) -- (8,3);
\draw (0,5) -- (3,5);
\draw (5,0)-- (5,3);
\draw (3,3) -- (5,3);
\draw (5,5) -- (5,8);
\draw (5,5)--(8,5);
\draw (0,0) -- (8,8);
\draw (0,8) -- (8,0);
\draw[pattern=north west lines] (6.5,0) rectangle (7,1);
\draw[pattern=north west lines] (7,0) rectangle (7.5,0.5);
\draw (6.5,0.5)--(7,0.5);
\draw[pattern=north west lines] (0,7) rectangle (0.5,7.5);
\draw[pattern=north east lines, pattern color=blue] (0,5) rectangle (3,8);
\draw[pattern=north east lines, pattern color=blue] (3,3) rectangle (5,5);
\draw[pattern=north east lines, pattern color=blue] (5,0) rectangle (8,3);
\draw[pattern=vertical lines, pattern color=red] (3,5) rectangle (5,8);
\draw[pattern=vertical lines, pattern color=red] (5,3) rectangle (8,5);
\draw[pattern=vertical lines, pattern color=red] (5,5) rectangle (8,8);
\draw (0,8) rectangle (0.5,7.5);
\draw (0.5,7.5) rectangle (1,7);
\draw (1,7) rectangle (1.5,6.5);
\draw (1.5,6.5) rectangle (2,6);
\draw (2,6) rectangle (2.5,5.5);
\draw (2.5,5.5) rectangle (3,5);
\draw (3,5) rectangle (3.5,4.5);
\draw (3.5,4.5) rectangle (4,4);
\draw (4,4) rectangle (4.5,3.5);
\draw (4.5,3.5) rectangle (5,3);
\draw (5,3) rectangle (5.5,2.5);
\draw (5.5,2.5) rectangle (6,2);
\draw (6,2) rectangle (6.5,1.5);
\draw (6.5,1.5) rectangle (7,1);
\draw (7,1) rectangle (7.5,0.5);
\draw (7.5,0.5) rectangle (8,0);
\end{tikzpicture}\medskip

\hbox{\rm where we set $d=a-b+1=3s-n+1$} \par
\centering{\hbox{\rm and $b'=b-1=n-2s-1$ if $b-1\ge 1$.}}
\end{center}
\end{figure}

\medskip

\begin{figure}[b]
\caption{The case $a<b$.}
\label{Fig2}
\begin{center}
\begin{tikzpicture}
\draw (0,0)--(8,8);
\draw (0,8)--(8,0);
\draw (0,0) rectangle (8,8);
\draw (0,8) rectangle (2,6);
\draw (2,6) rectangle (6,2);
\draw (6,2) rectangle (8,0);
\draw (6,6)--(6,8);
\draw (6,6)--(8,6);
\draw (6,6)--(4,8);
\draw (6,2.5)--(8,4.5);
\draw (7.5,7.5) rectangle (8,8);
\node[anchor=center]  at (7.75,7.75) {$\beta_1$};
\draw (6,6) rectangle (6.5,6.5);
\node at (6.25,6.25) {$\beta_s$};
\draw (6,6) rectangle (5.5,6.5);
\node at (5.75,6.25) {$\gamma_s$};
\draw (4,8) rectangle (4.5,7.5);
\node at (4.25,7.75) {$\gamma_1$};
\draw (6,2.5) rectangle (6.5,3);
\node at (6.25,2.75) {$\delta_1$};
\draw (7.5,4) rectangle (8,4.5);
\node at (7.75,4.25) {$\delta_s$};
\draw[pattern=north east lines, pattern color=blue] (0,8) rectangle (2,6);
\draw[pattern=north east lines, pattern color=blue] (2,6) rectangle (6,2);
\draw[pattern=north east lines, pattern color=blue] (6,2) rectangle (8,0);
\draw[pattern=vertical lines, pattern color=red] (6,6) rectangle (2,8);
\draw[pattern=vertical lines, pattern color=red] (6,6) rectangle (8,8);
\draw[pattern=vertical lines, pattern color=red] (6,2) rectangle (8,6);
\node at (1.5,7.5) {$B_1^+$};
\node at (0.5,6.5) {$B_1^-$};
\node at (5,5) {$B_2^+$};
\draw[->, >=latex, thick] (1.5,5)--(2,5);
\draw[->, >=latex, thick] (3.25,6.5)--(3.25,6);
\node at (3.25,6.8) {$B_{2,\,r}^+$};
\node at (3,3) {$B_2^-$};
\node at (1,5) {$B_{2,\,r}^-$};
\node at (7.5,1.5) {$B_3^+$};
\node at (6.5,0.5) {$B_3^-$};
\node at (4,7) {$R_1$};
\node at (7,4) {$R_2$};
\node at (7,7) {$SQ$};
\node at (1,8.3) {$a$};
\node at (4,8.3) {$b$};
\node at (7,8.3) {$a$};
\node at (8.3,1) {$a$};
\node at (8.3,4) {$b$};
\node at (8.3,7) {$a$};
\draw[pattern=north west lines] (2,4.5) rectangle (2.5,5);
\draw[pattern=north west lines] (2,5) rectangle (2.5,5.5);
\draw[pattern=north west lines] (2.5,4.5) rectangle (3,5);
\draw[pattern=north west lines] (3.5, 5) rectangle (4,5.5);
\draw[pattern=north west lines] (3.5, 4.5) rectangle (4,5);
\draw[pattern=north west lines] (3.5, 5.5) rectangle (4,6);
\draw[pattern=north west lines] (3, 5.5) rectangle (3.5,6);
\draw[pattern=north west lines] (2.5, 5.5) rectangle (3,6);
\draw[pattern=north west lines] (3, 5) rectangle (3.5,5.5);
\draw (0,8) rectangle (0.5,7.5);
\draw (0.5,7.5) rectangle (1,7);
\draw (1,7) rectangle (1.5,6.5);
\draw (1.5,6.5) rectangle (2,6);
\draw (2,6) rectangle (2.5,5.5);
\draw (2.5,5.5) rectangle (3,5);
\draw (3,5) rectangle (3.5,4.5);
\draw (3.5,4.5) rectangle (4,4);
\draw (4,4) rectangle (4.5,3.5);
\draw (4.5,3.5) rectangle (5,3);
\draw (5,3) rectangle (5.5,2.5);
\draw (5.5,2.5) rectangle (6,2);
\draw (6,2) rectangle (6.5,1.5);
\draw (6.5,1.5) rectangle (7,1);
\draw (7,1) rectangle (7.5,0.5);
\draw (7.5,0.5) rectangle (8,0);

\end{tikzpicture}
\end{center}
\end{figure}

The first $s$ strongly orthogonal positive roots $\beta_i=\ep_i-\ep_{n+1-i}$ corresponding to the Kostant cascade of $\g$ are the weights of the first $s$ matrices on the antidiagonal, all lying in the square $SQ$, the top right one corresponding to $\beta_1=\ep_1-\ep_n$ (see Fig. \ref{Fig1} and Fig. \ref{Fig2} above). For $1\le i\le s-1$, we take the roots $\alpha\in(\Delta^+\setminus\Delta^+_{\pi'})\cap\Gamma_{\beta_i}^0$ lying in the  square $SQ$  below or on the left of each $\beta_i$, so that the roots $\theta(\alpha)$  belong to $\Delta^+_{\pi'}$, more precisely they belong to $B_1^+$ or to $B_3^+$.
In other words we set, for every $1\le i\le s-1$,
\begin{align*}\Gamma_{\beta_i}=\{\beta_i,\,\ep_i-\ep_j,\,\ep_j-\ep_{n+1-i};\;n-s+1\le j\le n-i,\\
\;\ep_k-\ep_{n+1-i},\,\ep_i-\ep_k,\;i+1\le k\le s\}.\end{align*}
By construction, we have that $\Gamma_{\beta_i}$ verifies condition ({\bf C}) and that $\Gamma_{\beta_i}\subset\Delta^+$. We also remark that $\Gamma_{\beta_i}\subset {\rm H}_{\beta_i}$ with the notation of subsection \ref{Kcasc}.

Moreover the square $SQ$ (except $\beta_s$) together with $B_1^+$ and $B_3^+$ are entirely covered  by the union of the sets $\Gamma_{\beta_i}$, for $1\le i\le s-1$. In other words we have that
\begin{equation}\bigsqcup_{1\le i\le s-1}\Gamma_{\beta_i}\sqcup\{\beta_s\}=\{\ep_i-\ep_j;\;1\le i\le s,\,n-s+1\le j\le n\}\sqcup\Delta^+_{\pi'_1}\sqcup\Delta^+_{\pi'_3}.\label{decompSQ}\end{equation}

For all $0\le j\le c-1$ ($c=\min(a,\,b)$), the roots $\gamma_{s-j}=\ep_{s-j}-\ep_{n-s-j}$ lie  in the rectangle $R_1$ on a line which is parallel to the diagonal with $\gamma_s$ at the bottom right of $R_1$ (see Fig. \ref{Fig1} and Fig. \ref{Fig2} above). We take the roots $\alpha\in(\Delta^+\setminus\Delta^+_{\pi'})\cap\Gamma_{\gamma_{s-j}}^0$  lying in the rectangle $R_1$ above or on the left - if possible - of every $\gamma_{s-j}$, so that the roots $\theta(\alpha)\in\Delta_{\pi'}$. More precisely the roots $\theta(\alpha)$ for $\alpha$ lying on the left of $\gamma_{s-j}$ lie in $B_2^+$ and the roots $\theta(\alpha)$ for $\alpha$ lying above $\gamma_{s-j}$ lie in $B_1^-$.

In other words we set, for every $0\le j\le c-1$,
\begin{align*}\Gamma_{\gamma_{s-j}}=\{\gamma_{s-j},\,\ep_k-\ep_{n-s-j},\,\ep_{s-j}-\ep_k,\;1\le k\le s-j-1,\\
\;\ep_{s-j}-\ep_{\ell},\,\ep_{\ell}-\ep_{n-s-j},\;s+1\le \ell\le n-s-j-1\}.\end{align*}
By construction, we have that $\Gamma_{\gamma_{s-j}}$ verifies condition ({\bf C}) and the roots of $\Gamma_{\gamma_{s-j}}$ lie both in $\Delta^+$ and in $\Delta^-_{\pi'}$.

For all $1\le k\le c-1$ (if $a\ge b$) or for all $1\le k\le c$ (if $a<b$), the roots $\delta_k=\ep_{n-s-k}-\ep_{n-s+k}$ lie in the rectangle $R_2$ on a line which is parallel to the antidiagonal, the leftmost one, corresponding to $\delta_1$, lying on the second row of $R_2$ if we count from bottom to top (see Fig. \ref{Fig1} and Fig. \ref{Fig2} above). We take the roots $\alpha\in(\Delta^+\setminus\Delta^+_{\pi'})\cap\Gamma_{\delta_k}^0$  lying in the rectangle $R_2$ above or on the right - if possible - of every $\delta_k$, so that the roots $\theta(\alpha)\in\Delta_{\pi'}$. More precisely the roots $\theta(\alpha)$ for $\alpha$ lying on the right of $\delta_k$ lie in 
$B_3^-$ and the roots $\theta(\alpha)$ for $\alpha$ lying above $\delta_k$ lie in $B_2^-$.

In other words we set, for every $1\le k\le c-1$, if $a\ge b$, or for every $1\le k\le c$, if $a<b$ :
\begin{align*}\Gamma_{\delta_k}=\{\delta_k,\,\ep_{n-s-k}-\ep_{\ell},\;\ep_{\ell}-\ep_{n-s+k};\;n-s+k+1\le \ell\le n,\\
\ep_{\ell'}-\ep_{n-s+k},\,\ep_{n-s-k}-\ep_{\ell'};\;s+1\le\ell'\le n-s-k-1\}.
 \end{align*}
 By construction, we have that $\Gamma_{\delta_k}$ verifies condition ({\bf C}) and the roots of $\Gamma_{\delta_k}$ lie both in $\Delta^+$ and in $\Delta^-_{\pi'}$.
 
Using the above process, we obtain by construction  disjoint Heisenberg sets which cover the rectangles $R_1$, $R_2$ and the square $SQ$, except for the root $\beta_s=\ep_s-\ep_{n-s+1}$ and the roots $\ep_{n-s}-\ep_k$ for $n-s+1\le k\le n$ (which correspond to the bottom row of the rectangle $R_2$) and part of the blocks of the Levi factor.
 
 One can check that the above process implies that the set $\{\ep_{n-s}-\ep_k;\;s+1\le k\le n-s-1\}\subset\Delta^-_{\pi'}$ (which corresponds to the bottom row of $B_2^-$) has an empty intersection with the union of the Heisenberg sets constructed above.
 
 The roots in $\Delta_{\pi'}\setminus\{\ep_{n-s}-\ep_k;\;s+1\le k\le n-s-1\}$ which do not belong to the union of the above Heisenberg sets will be called {\it remaining roots}. We will precise below how we can construct other Heisenberg sets which cover these remaining roots.
 
 Assume  that $a\ge b$. Then one can check that $B_2$, except its bottom row,  is entirely covered by the already built Heisenberg sets. It follows that the remaining roots lie only in $B_1^-$ or in $B_3^-$. More precisely one can check that the remaining roots in $B_1^-$ lie in the top left  corner $B_{1,\,r}^-\subset B_1^-$ corresponding  to the negative part of the simple Lie subalgebra $\mathfrak r'_{1,\,r}\subset\mathfrak r'_1$ of type ${\rm A}_{3s-n-1}$ spanned by root vectors whose weight is equal to one of the  first $3s-n-1$ simple roots of $\g$. We consider the strongly orthogonal positive roots $\beta''_k$, for $1\le k\le[(3s-n)/2]$, corresponding to the Kostant cascade of  $\mathfrak r'_{1,\,r}$ and  we take in $S$ the $-\beta''_k$ , with $-\beta''_1$ being the bottom left root of $B_{1,\,r}^-$ : the roots $-\beta''_k$ lie on the antidiagonal of the set $B_{1,\,r}^-$.
 In other words we have:
 $$\beta''_k=\ep_k-\ep_{3s-n+1-k},\;\forall 1\le k\le [(3s-n)/2].$$
 Then the Heisenberg set $\Gamma_{-\beta''_k}$ consists in roots in $B_1^-$ lying above or on the right of each $-\beta''_k$. 
 In other words we have: 
 $\forall\;1\le k\le [(3s-n)/2]$,
 \begin{align*}\Gamma_{-\beta''_k}=\{-\beta''_k,\; \ep_{3s-n+1-k}-\ep_{\ell},\;\ep_{\ell}-\ep_k;\\
k+1 \le\ell\le 3s-n-k\} \subset\Delta^-_{\pi'_1}.    \end{align*}

 Denoting by ${\rm H}''_{\beta''_k}$ the Heisenberg set with centre $\beta''_k$ of the simple Lie algebra $\mathfrak r'_{1,\,r}$ as defined in subsection \ref{Kcasc} and by $-{\rm H}''_{\beta''_k}$ the set formed by the opposite of roots in ${\rm H}''_{\beta''_k}$, we have that $\Gamma_{-\beta''_k}=-{\rm H}''_{\beta''_k}$.
 By Lemma \ref{strong} (i) the set $B_{1,\,r}^-$ of remaining roots in $B_1^-$ is such that 
 $$B_{1,\,r}^-=\bigsqcup_{1\le k\le[(3s-n)/2]}\Gamma_{-\beta''_k}.$$
 
 Similarly, one can check that the remaining roots in $B_3^-$ lie in the bottom right corner $B_{3,\,r}^-\subset B_3^-$ corresponding to the negative part  of the simple Lie subalgebra $\mathfrak r'_{3,\,r}\subset\mathfrak r'_3$ of type ${\rm A}_{3s-n}$ spanned by root vectors whose weight is equal to one of the last $3s-n$ simple roots of $\g$.  Considering the strongly orthogonal positive roots $\beta'_k$, for $1\le k\le[(3s-n+1)/2]$, corresponding to the Kostant cascade of $\mathfrak r'_{3,\,r}$, we take in $S$ the roots $-\beta'_k$ for $1\le k\le[(3s-n+1)/2]$, with $-\beta'_1$ being the bottom left root of $B_{3,\,r}^-$ : the roots $-\beta'_k$ lie on the antidiagonal of the set $B_{3,\,r}^-$. In other words we have:
 $$\beta'_k=\ep_{2n-3s-1+k}-\ep_{n-k+1},\;\forall 1\le k\le [(3s-n+1)/2].$$
 
 Then the Heisenberg set $\Gamma_{-\beta'_k}$ consists in roots in $B_3^-$ lying above or on the right of $-\beta'_k$. In other words we have:
 $\forall\;1\le k\le [(3s-n+1)/2]$,
 \begin{align*}\Gamma_{-\beta'_k}=\{-\beta'_k,\;  \ep_{n-k+1}-\ep_{\ell},\ep_{\ell}-\ep_{2n-3s-1+k};\\
 \;2n-3s+k\le\ell\le n-k\} \subset\Delta^-_{\pi'_3}.       \end{align*}
 
  Denoting by ${\rm H}'_{\beta'_k}$ the Heisenberg set with centre $\beta'_k$ of the simple Lie algebra $\mathfrak r'_{3,\,r}$ as defined in subsection \ref{Kcasc} and by $-{\rm H}'_{\beta'_k}$ the set formed by the opposite of roots in ${\rm H}'_{\beta'_k}$, we have that $\Gamma_{-\beta'_k}=-{\rm H}'_{\beta'_k}$.
 By Lemma \ref{strong} (i)  the set $B_{3,\,r}^-$ of remaining roots in $B_3^-$ is such that
 $$B_{3,\,r}^-=\bigsqcup_{1\le k\le[(3s-n+1)/2]}\Gamma_{-\beta'_k}.$$
 The sets $B_{1,\,r}^-$ and $B_{3,\,r}^-$ of remaining roots are represented with north west lines on Figure \ref{Fig1}.
 
 Finally assume that $a<b$. Then one can check that $B_1^-$ and $B_3^-$ are entirely covered by the above Heisenberg sets. In $B_2^-$, one can check that the remaining roots lie in the top left corner $B_{2,\,r}^-\subset B_2^-$ corresponding to the negative part of the simple Lie subalgebra $\mathfrak r'_{2,\,r^-}\subset\mathfrak r'_2$ of type  ${\rm A}_{n-3s-2}$ spanned by root vectors whose weight belongs to the subset $\{\alpha_{s+1},\,\ldots,\,\alpha_{n-2s-2}\}$ of simple roots of $\g$. We take in $S$ the  opposite of the strongly orthogonal positive roots $\beta''_k$, for $1\le k\le[(n-3s-1)/2]$, corresponding to the Kostant cascade of $\mathfrak r'_{2,r^-}$, with $-\beta''_1$ being the bottom left root in $B_{2,\,r}^-$ : the roots $-\beta''_k$ lie on the antidiagonal of $B_{2,\,r}^-$. In other words, we have:
 $$\beta''_k=\ep_{s+k}-\ep_{n-2s-k},\;\forall 1\le k\le [(n-3s-1)/2].$$
 
 Then the Heisenberg set $\Gamma_{-\beta''_k}$ consists in roots in $B_2^-$ lying above or on the right of each $-\beta''_k$. In other words we have:
 $\forall\;1\le k\le[(n-3s-1)/2]$,
 \begin{align*}\Gamma_{-\beta''_k}=\{-\beta''_k,\; \ep_{n-2s-k}-\ep_{\ell},\,\ep_{\ell}-\ep_{s+k};\\
  s+k+1\le\ell\le n-2s-k-1\}\subset\Delta^-_{\pi'_2}. \end{align*}
 Denoting by ${\rm H}''_{\beta''_k}$ the Heisenberg set with centre $\beta''_k$ of the simple Lie algebra $\mathfrak r'_{2,\,r^-}$ as defined in subsection \ref{Kcasc} and by $-{\rm H}''_{\beta''_k}$ the set formed by the opposite of roots in ${\rm H}''_{\beta''_k}$, we have that $\Gamma_{-\beta''_k}=-{\rm H}''_{\beta''_k}$.
 
 Again by Lemma \ref{strong} (i), the set $B_{2,\,r}^-$ of remaining roots in $B_2^-$ is such that
 $$B_{2,\,r}^-=\bigsqcup_{1\le k\le[(n-3s-1)/2]}\Gamma_{-\beta''_k}.$$
 
  In $B_2^+$ one can check that the remaining roots lie in the top left corner $B_{2,\,r}^+\subset B_2^+$ corresponding to the positive part of the simple Lie subalgebra $\mathfrak r'_{2,\,r}\subset\mathfrak r'_2$ of type   ${\rm A}_{n-3s-1}$, spanned by root vectors whose weight belongs to the subset $\{\alpha_{s+1},\ldots,\,\alpha_{n-2s-1}\}$ of simple roots of $\g$. We take in $S$ the strongly orthogonal positive roots $\beta'_k$, for $1\le k\le[(n-3s)/2]$, corresponding to the Kostant cascade of $\mathfrak r'_{2,r}$, with $\beta'_1$ being the top right root in $B_{2,\,r}^+$ : the roots $\beta'_k$ lie on the antidiagonal of $B_{2,\,r}^+$. (Observe that we have $\mathfrak r'_{2,\,r^-}\subset\mathfrak r'_{2,\,r}$). In other words, we have:
 $$\beta'_k=\ep_{s+k}-\ep_{n-2s-k+1},\;\forall 1\le k\le [(n-3s)/2].$$
  
  Then the Heisenberg set $\Gamma_{\beta'_k}$ consists in roots in $B_2^+$ lying below or on the left of each $\beta'_k$. In other words, we have:
  $\forall\;1\le k\le[(n-3s)/2]$,
 \begin{align*}\Gamma_{\beta'_k}=\{\beta'_k,\;\ep_{s+k}-\ep_{\ell},\;\ep_{\ell}-\ep_{n-2s-k+1};\\
        s+k+1\le\ell\le n-2s-k\}\subset\Delta^+_{\pi'_2}.       \end{align*}
  
  Denoting by ${\rm H}'_{\beta'_k}$ the Heisenberg set with centre $\beta'_k$ of the simple Lie algebra $\mathfrak r'_{2,\,r}$ as defined in subsection \ref{Kcasc}, we have that $\Gamma_{\beta'_k}={\rm H}'_{\beta'_k}$.

  By Lemma \ref{strong} (i), the set  $B_{2,\,r}^+$ of remaining roots in $B_2^+$ is such that
  $$B_{2,\,r}^+ =\bigsqcup_{1\le k\le[(n-3s)/2]}\Gamma_{\beta'_k}.$$
The sets $B_{2,\,r}^-$ and $B_{2,\,r}^+$ of remaining roots are represented with north west lines on Figure \ref{Fig2}.
 
By construction, in both cases ($a\ge b$ or $a<b$), all the Heisenberg sets $\Gamma_{\gamma}$, for $\gamma\in S$,  are disjoint and verify condition $({\bf C})$.

Moreover the set $T=(\Delta^+\sqcup\Delta^-_{\pi'})\setminus\bigsqcup_{\gamma\in S}\Gamma_{\gamma}$ is equal to (in both cases) :
\begin{align*}T=\{\ep_s-\ep_{n-s+1},\;\ep_{n-s}-\ep_k;\;s+1\le k\le n,\,k\neq n-s\}.\end{align*} 
We check that $\lvert T\rvert=n-s={\rm index}\,\p_{\Lambda}={\rm index}\,\widetilde{\p_{\Lambda}}$ by Lemmas \ref{indexp} and \ref{indexindex2}.
  Summarizing we have obtained disjoint Heisenberg sets $\Gamma_{\gamma}$, $\gamma\in S$, verifying condition $({\bf C})$ and such that $$\Delta^+\sqcup\Delta^-_{\pi'}=\bigsqcup_{\gamma\in S}\Gamma_{\gamma}\sqcup T.$$

\subsection{A basis.}

Recall the set $S\subset\Delta^+\sqcup\Delta^-_{\pi'}$ given in subsection \ref{ST}.
\begin{lm}\label{basis}

The set $S$ is such that its restriction to $\h_{\pi'}\oplus\h_{\Lambda}$ is a basis for its dual vector space.

\end{lm}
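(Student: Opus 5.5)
Since $\h_{\pi'}\oplus\h_{\Lambda}$ is the orthogonal of $\Bbbk\Lambda(\p)=\Bbbk(\varpi_s+\varpi_{n-s})$ in $\h$ (subsection \ref{truncCartan}), its dual is $\h^*/\Bbbk(\varpi_s+\varpi_{n-s})$, of dimension $n-2$. Write $\delta:=\varpi_s+\varpi_{n-s}=\sum_{j=1}^s\ep_j-\sum_{j=n-s+1}^n\ep_j$ (Eq. \ref{weightclassique}). The plan is to show two things: $\lvert S\rvert=n-2$, and $S\cup\{\delta\}$ is linearly independent in $\h^*$, hence a basis of $\h^*$. Together these give the statement.

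\emph{The count.} If $a\ge b$, then
$$\lvert S\rvert=(s-1)+b+(b-1)+[(3s-n+1)/2]+[(3s-n)/2].$$
Since $[(m+1)/2]+[m/2]=m$ for $m=3s-n$, this gives $\lvert S\rvert=(s-1)+2b-1+(3s-n)=n-2$. If $a<b$, then
$$\lvert S\rvert=(s-1)+s+s+[(n-3s)/2]+[(n-3s-1)/2]=3s-1+(n-3s-1)=n-2.$$

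\emph{Independence, via a graph.} Every element of $S$ is $\pm(\ep_i-\ep_j)$. I view such a root as an edge $\{i,\,j\}$ of a graph $\mathcal G_S$ on the vertex set $\{1,\ldots,n\}$. Roots of type A are linearly independent exactly when the corresponding edges form a forest. Moreover, a vector $\sum_k c_k\ep_k$ with $\sum_k c_k=0$ lies in the span of the edges of a forest exactly when $\sum_{k\in C}c_k=0$ for every connected component $C$.

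Since $\mathcal G_S$ has $n-2$ edges, the target is the following: $\mathcal G_S$ is a forest with exactly two components $C_1$ and $C_2$, and $\sum_{k\in C_1}c_k\neq0$, where $c_k$ is the coefficient of $\ep_k$ in $\delta$. That coefficient is $+1$ on $\{1,\ldots,s\}$, $-1$ on $\{n-s+1,\ldots,n\}$, and $0$ on the middle indices.

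\emph{The edges.} They are:
\begin{itemize}
\item $\beta_i$: $i$ -- $(n+1-i)$ for $i\le s-1$;
\item $\gamma_{s-j}$: $(s-j)$ -- $(n-s-j)$ for $0\le j\le c-1$;
\item $\delta_k$: $(n-s-k)$ -- $(n-s+k)$;
\item the cascade edges $\beta'_k$ and $\beta''_k$ inside $B_{1,r}$ and $B_{3,r}$ (case $a\ge b$) or inside $B_{2,r}$ (case $a<b$).
\end{itemize}
The cascade edges on a block of consecutive vertices $\{p,\ldots,q\}$ join $p+k-1$ and $q-k+1$, so they are disjoint edges, a matching.

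\emph{Tracing paths.} I then follow paths case by case. For example, starting at a vertex $i\le s-1$, the edge $\beta_i$ goes to $n+1-i$ in the last block. From there a $\delta_k$ edge (when $n+1-i=n-s+k$) goes to the middle block. Next a $\gamma$ edge goes back to the first block at a smaller index, or a cascade edge continues in the residual corner. Each vertex has degree at most $2$, because each of the three families is a matching. So $\mathcal G_S$ is a union of paths and cycles. I would show there are no cycles by a monotonicity argument: along a path the first-block index strictly decreases, or the path terminates in a residual corner, which is a matching and hence cannot close up.

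\emph{Components and the main obstacle.} By Euler's count, a forest with $n$ vertices and $n-2$ edges has exactly two components. Equivalently, I need two path-endpoint pairs, which should be located at the vertices left over by $T$: namely $s$, $n-s+1$ and $n-s$. The expected outcome is that vertex $s$ (through $\gamma_s$, joined to $n-s$) lies in one component and vertex $n-s+1$ in the other. Then $C_1\ni s$ and $C_2\ni n-s+1$ should carry unequal numbers of $+1$ and $-1$ coefficients of $\delta$.

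This is the main obstacle: bookkeeping the two components precisely, and checking $\sum_{C_1}c_k\neq0$, separately in the cases $a>b$, $a=b$ and $a<b$, including the parities of $3s-n$ and $n-3s$. I would first test small cases, such as $(a,b)=(2,1),(2,2),(1,3)$, to guess each component's vertex set. Then I would prove the general description by induction along the staircase of $\gamma$ and $\delta$ edges.
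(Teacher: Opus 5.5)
\textbf{Gap: the combinatorial core is left unproved.} Your reduction is correct, and it is a genuinely different route from the paper's. The paper proves $\mathbb Z\pi=\mathbb ZS+\mathbb Z\Lambda(\p)$ by chains of differences ($\beta_s-\gamma_s=\alpha_{n-s}$, $\delta_{k+1}-\delta_k$, $\beta_{s-k-1}-\gamma_{s-k-1}$, $\beta_k-\beta'_k-\beta''_k$, \dots), which is the same path-tracing done algebraically. Your count $\lvert S\rvert=n-2$ is also right.

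However, the claim that $\mathcal G_S$ is a forest with two components, on one of which $\delta$ has nonzero coefficient sum, is \emph{equivalent} to the lemma. You leave it unproved, so as it stands the proposal only reformulates the statement. The justifications you sketch also do not work:
\begin{itemize}
\item Degree $\le 2$ does not follow from ``each family is a matching'', since a union of three matchings can have degree $3$. It needs disjointness of supports. For example, for $a\ge b$ the $\gamma$-edges meet the first block in $\{s-b+1,\dots,s\}$, while the $\beta''$-edges live in $\{1,\dots,3s-n\}=\{1,\dots,s-b\}$; likewise for $\delta$ versus $\beta'$ in the last block.
\item Monotonicity fails in the residual corners. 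There the path visits the first-block vertices $1,\,3s-n,\,2,\,3s-n-1,\dots$, alternating $\beta'$-, $\beta$- and $\beta''$-edges. So the fallback ``a matching cannot close up'' proves nothing, because several matchings together can close up.
\end{itemize}

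\textbf{How to close it.} The missing observation makes the bookkeeping easy. Vertices $s$ and $n-s$ both have degree one: $\beta_s\notin S$, each $\delta_k$ has endpoints $n-s\pm k$ with $k\ge 1$, and no cascade edge reaches $s$ or $n-s$. So the single edge $\gamma_s$ is a whole component $\{s,n-s\}$, with $\delta$-sum $1\neq 0$. It then suffices to show that the remaining $n-2$ vertices are connected by the remaining $n-3$ edges; acyclicity follows by counting.

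The path begins $n-s+1\to n-s-1\to s-1\to n-s+2\to n-s-2\to s-2\to\cdots$, using the edges $\delta_k,\gamma_{s-k},\beta_{s-k}$.
\begin{itemize}
\item For $a\ge b$ it arrives at $2n-3s$ (via $\beta_{s-b+1}$, or directly if $b=1$). Identify $i$ with $n+1-i$ along the $\beta_i$ and set $m=3s-n$. Then the $\beta'$- and $\beta''$-edges become the reflections $k\mapsto m+2-k$ of $\{1,\dots,m+1\}$ and $k\mapsto m+1-k$ of $\{1,\dots,m\}$. Their composite is the shift $k\mapsto k-1$, so the corners are swept in a single path $m+1,1,m,2,m-1,\dots$.
\item For $a<b$ it arrives at $n-2s$ through $\delta_s$. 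The two cascade matchings are the reflections $v\mapsto n-s+1-v$ of $\{s+1,\dots,n-2s\}$ and $v\mapsto n-s-v$ of $\{s+1,\dots,n-2s-1\}$, which again compose to a shift.
\end{itemize}
With these checks your argument is complete.
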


\begin{proof}

By subsections \ref{rootsp} and \ref{truncCartan} we have that $\dim(\h_{\pi'}\oplus\h_{\Lambda})=n-3+1=n-2$ and one checks that $\lvert S\rvert=n-2$ too. 
Moreover one has that $\h\cap\p_{\Lambda}=\{h\in\h\mid\;h(\Lambda(\p))=\{0\}\}=\h_{\pi'}\oplus\h_{\Lambda}$, namely $\h_{\pi'}\oplus\h_{\Lambda}$ is the orthogonal in $\h$, with respect to the duality, of $\Lambda(\p)\subset\h^*$ and recall Eq. \ref{weightSy(p)}.
Inspired by \cite[Proof of Prop. 5.7]{J5}, it is easily seen that the proof of the Lemma will follow if we can show that
$$(\h_{\pi'}\oplus\h_{\Lambda})^{\perp}+\sum_{\alpha\in S}\Bbbk\alpha=\h^*$$
namely that
\begin{equation}\Bbbk\Lambda(\p)+\sum_{\alpha\in S}\Bbbk\alpha=\h^*.\label{Slibre}\end{equation}

Eq. \ref{Slibre} will follow from the following equality :

\begin{equation} \mathbb Z\pi=\mathbb Z S+\mathbb Z \Lambda(\p).\label{Slibrebis}\end{equation}

Since $\lvert S\rvert=n-2$, if Eq. \ref{Slibre} is true, then it is a direct sum and $$\sum_{\alpha\in S}\Bbbk\alpha=\bigoplus_{\alpha\in S}\Bbbk\alpha.$$

By \ref{weightSy(p)} and \ref{weightclassique} and the definition of the elements $\beta_i$, we have that
  \begin{equation}\varpi_s+\varpi_{n-s}=\sum_{i=1}^s\beta_i\in\Lambda(\p)\label{sumortho}\end{equation} (see also \cite[Table I]{J1}).

It follows that $\beta_s\in\mathbb ZS+\mathbb Z\Lambda(\p)$.

Now $\beta_s-\gamma_s=\alpha_{n-s}\in\mathbb ZS+\mathbb Z\Lambda(\p)$. If $s=1$ and $a\ge b$ that is, if $n=3$, Eq. \ref{Slibrebis} follows, since in this case $S=\{\gamma_1=\alpha_1\}$. If $s>1$ and $a\ge b$ or if $a<b$, then $\delta_{1}=\alpha_{n-s-1}+\alpha_{n-s}\in S$. Hence $\alpha_{n-s-1}\in\mathbb Z S+\mathbb Z\Lambda(\p)$. 

Actually we can show by an increasing induction on $k$, that for any $0\le k\le c-1$, we have that 
\begin{equation}\alpha_{n-s+k}\in\mathbb Z S+\mathbb Z\Lambda(\p)\;\hbox{\rm and}\;\;\alpha_{n-s-k}\in\mathbb Z S+\mathbb Z\Lambda(\p).\label{eq}\end{equation}

Indeed, assume 
that for some nonnegative integer $k<c-1$, $\alpha_{n-s-j}\in\mathbb ZS+\mathbb Z\Lambda(\p)$ and $\alpha_{n-s+j}\in\mathbb ZS+\mathbb Z\Lambda(\p)$ for all $0\le j\le k$.

Since, for $k\ge 1$, $$\delta_{k+1}-\delta_k=\alpha_{n-s-(k+1)}+\alpha_{n-s+k}\in\mathbb Z S+\mathbb Z\Lambda(\p)$$ we obtain by the induction hypothesis that
$$\alpha_{n-s-(k+1)}\in\mathbb Z S+\mathbb Z\Lambda(\p)$$ even for $k=0$ by what we said above.

Since
\begin{align*}\beta_{s-k-1}-\gamma_{s-k-1}=\ep_{n-s-k-1}-\ep_{n-s+k+2}\\
=\underbrace{\alpha_{n-s-k-1}+\alpha_{n-s-k}+\ldots+\alpha_{n-s+k}}_{\in\mathbb ZS+\mathbb Z\Lambda(\p)\,\hbox{\rm\scriptsize by induction hypothesis and the above}}\!\!+\,\alpha_{n-s+k+1}\in\mathbb ZS+\mathbb Z\Lambda(\p)\end{align*}
we deduce that $$\alpha_{n-s+k+1}\in\mathbb ZS+\mathbb Z\Lambda(\p).$$

Eq. \ref{eq} follows. In other words one has, if $b\le a$

\begin{equation}\forall s+1\le j\le 2n-3s-1,\;\;\alpha_j\in\mathbb ZS+\mathbb Z\Lambda(\p)\label{eq1}\end{equation}

and if $a<b$,

\begin{equation}\forall n-2s+1\le j\le n-1,\;\;\alpha_j\in\mathbb ZS+\mathbb Z\Lambda(\p).\label{eq2}\end{equation}

Assume that $2s<n\le 3s\iff a\ge b$. We will show, by an increasing induction on $k$, that
\begin{equation}\forall\;0\le k\le n-2s-1,\;\;\alpha_{s-k}\in\mathbb ZS+\mathbb Z\Lambda(\p).\label{eq3}\end{equation}

First step, since
$$\gamma_s=\alpha_s+\underbrace{\sum_{j=s+1}^{n-s-1}\alpha_j}_{\in\mathbb ZS+\mathbb Z\Lambda(\p)\;\hbox{\rm\scriptsize by}\; Eq. \ref{eq1} }\in S$$

we obtain that $\alpha_s\in\mathbb ZS+\mathbb Z\Lambda(\p)$.

Assume that, for some nonnegative integer $k$ such that $k<n-2s-1$, one has that $\alpha_{s-k}\in\mathbb ZS+\mathbb Z\Lambda(\p)$. Since
\begin{equation*} \gamma_{s-k}-\gamma_{s-k-1}=-\alpha_{s-k-1}+\alpha_{n-s-k-1}\in\mathbb ZS+\mathbb Z\Lambda(\p)\end{equation*} and by Eq. \ref{eq1}, we obtain that
$$\alpha_{s-k-1}\in\mathbb ZS+\mathbb Z\Lambda(\p)$$ which gives Eq. \ref{eq3}.

Finally, combining with Eq. \ref{eq1}, if $a\ge b$, we have that
\begin{equation}\forall \;3s-n+1\le j\le 2n-3s-1,\;\;\alpha_j\in\mathbb ZS+\mathbb Z\Lambda(\p).\label{eq4}\end{equation}

We claim that 
\begin{align} \forall\,0\le k\le [(3s-n-2)/2], \nonumber\\
\alpha_{2n-3s+k}\in\mathbb ZS+\mathbb Z\Lambda(\p)\;\hbox{\rm and}\;\alpha_{3s-n-k}\in\mathbb ZS+\mathbb Z\Lambda(\p).\label{eq5}\end{align}

Our claim implies that $3s-n\ge 2$. Let us assume this. We will show the claim by an increasing induction on $k$.

First step, 
$$\beta_1-\beta'_1-\beta''_1=\alpha_{3s-n}+\underbrace{\sum_{j=3s-n+1}^{2n-3s-1}\alpha_j}_{\in\mathbb ZS+\mathbb Z\Lambda(\p)\;\hbox{\rm\scriptsize by Eq.\ref{eq4}}}.$$

Hence $\alpha_{3s-n}\in\mathbb ZS+\mathbb Z\Lambda(\p)$.

Moreover, since $3s-n\le s-1$
$$\beta_{3s-n}=\underbrace{\sum_{j=3s-n}^{2n-3s-1}\alpha_j}_{\in\mathbb ZS+\mathbb Z\Lambda(\p)\;\hbox{\rm\scriptsize by the above}}+\alpha_{2n-3s}.$$

Hence $\alpha_{2n-3s}\in\mathbb ZS+\mathbb Z\Lambda(\p)$.

Now if we assume that for some nonnegative integer $k$ such that $k\le[(3s-n-2)/2]$ we have that
for all nonnegative integer $j$ such that $j\le k$, $\alpha_{2n-3s+j}\in\mathbb ZS+\mathbb Z\Lambda(\p)\;\hbox{\rm and}\;\alpha_{3s-n-j}\in\mathbb ZS+\mathbb Z\Lambda(\p)$, then we have

$$\beta_{k+2}-\beta'_{k+2}-\beta''_{k+2}=\alpha_{3s-n-k-1}+\underbrace{\sum_{j=3s-n-k}^{2n-3s+k}\alpha_j}_{\in\mathbb ZS+\mathbb Z\Lambda(\p)\,\hbox{\rm\scriptsize by the induction hypothesis}}.$$

Hence $\alpha_{3s-n-k-1}\in\mathbb ZS+\mathbb Z\Lambda(\p)$.

Finally since $1\le 3s-n-k-1\le s-1$, we have $$\beta_{3s-n-k-1}=\ep_{3s-n-k-1}-\ep_{2n+2-3s+k}=\underbrace{\sum_{j=3s-n-k-1}^{2n-3s+k}\alpha_j}_{\in\mathbb ZS+\mathbb Z\Lambda(\p)\,\hbox{\rm\scriptsize by the above}}+\alpha_{2n-3s+k+1}.$$ Hence we have
$\alpha_{2n-3s+k+1}\in\mathbb ZS+\mathbb Z\Lambda(\p)$ and the claim follows.

Finally combining with Eq. \ref{eq4} we have, for $a\ge b$
\begin{equation}\forall [(3s-n+3)/2]\le j\le[(3n-3s-2)/2],\,\alpha_j\in\mathbb ZS+\mathbb Z\Lambda(\p).\label{eq6}\end{equation}

In order to obtain Eq. \ref{Slibrebis} in case $a\ge b$ it remains to show that
\begin{equation}\forall\,1\le k\le[(3s-n+3)/2],\,\alpha_k\in\mathbb ZS+\mathbb Z\Lambda(\p).\label{eq7}\end{equation}

Similarly as before Eq. \ref{eq7} can be shown by a decreasing induction on $k$, using $\beta_{k-1}-\beta'_{k-1}$ and then using $\beta''_{k-1}$.

Combining with Eq. \ref{eq6} we obtain, for $a\ge b$, that
\begin{equation}\forall\,1\le k\le [(3n-3s-2)/2],\,\alpha_j\in\mathbb ZS+\mathbb Z\Lambda(\p).\label{eq8}\end{equation}

Finally one shows, for $a\ge b$, that
\begin{equation} \forall [(3n-3s-2)/2]\le k\le n-1, \alpha_k\in\mathbb ZS+\mathbb Z\Lambda(\p)\label{eq9}\end{equation}
by using an increasing induction on $k$ and $\beta_{n-k-1}$ since $1\le n-k-1\le s-1$ if $[(3n-3s-2)/2]\le k< n-1$. Eq. \ref{Slibrebis} follows for $a\ge b$ (equivalently $2s<n\le 3s$).

To complete the proof, consider now the case $n>3s$ (equivalently $a<b$) and recall Eq. \ref{eq2}.

One has that

\begin{equation*}\delta_s=\ep_{n-2s}-\ep_n=\alpha_{n-2s}+\underbrace{\sum_{j=n-2s+1}^{n-1}\alpha_j}_{\in\mathbb ZS+\mathbb Z\Lambda(\p)\,\hbox{\rm\scriptsize by Eq. \ref{eq2}}}\end{equation*}

Hence $\alpha_{n-2s}\in\mathbb ZS+\mathbb Z\Lambda(\p)$.
Let $1\le k\le s-1$. Then $$\gamma_{s-k+1}-\gamma_{s-k}=\ep_{s-k+1}-\ep_{n-s-k+1}-\ep_{s-k}+\ep_{n-s-k}=-\alpha_{s-k}+\alpha_{n-s-k}.$$

Since $\alpha_{n-s-k}\in\mathbb ZS+\mathbb Z\Lambda(\p)$ by Eq. \ref{eq2}, we deduce that $\alpha_{s-k}\in\mathbb ZS+\mathbb Z\Lambda(\p)$. Hence $\alpha_k\in\mathbb ZS+\mathbb Z\Lambda(\p)$ for all $1\le k\le s-1$.

Then using $\beta'_{k+1}-\beta''_{k+1}=\alpha_{n-2s-(k+1)}$ and also $\beta'_{k+1}-\beta'_{k+2}=\alpha_{s+k+1}+\alpha_{n-2s-k-1}$ when this is well defined one can show inductively that
$$\forall \;s+1\le k\le n-2s-1,\,\alpha_k\in\mathbb ZS+\mathbb Z\Lambda(\p).$$

Since $\gamma_s=\alpha_s+\ldots+\alpha_{n-s-1}$ we obtain that $\alpha_s\in\mathbb ZS+\mathbb Z\Lambda(\p)$.

Eq. \ref{Slibrebis} follows in case $a<b$ too. The proof of the Lemma is complete.
\end{proof}

 \subsection{The nondegeneracy of the restriction of $\widetilde\Phi_y$ to $\g_O\times\g_O$.}\label{nondeg}
 
 To prove that the restriction to $\g_O\times\g_O$ of the bilinear form $\widetilde\Phi_y$ is nondegenerate, we will decompose the set $S$ into $S=S^m\sqcup S^+\sqcup S^-$
 where $S^+$, resp. $S^-$, is formed by the roots $\gamma\in S$ such that the Heisenberg set $\Gamma_{\gamma}\subset\Delta^+$, resp. $\Gamma_{\gamma}\subset\Delta^-_{\pi'}$ and $S^m$ is formed by the roots $\gamma\in S$ such that the Heisenberg set $\Gamma_{\gamma}$ contains both positive and negative roots.
 We will set $$O^m=\bigsqcup_{\gamma\in S^m}\Gamma^0_{\gamma},\;\;O^+=\bigsqcup_{\gamma\in S^+}\Gamma^0_{\gamma},\;\;O^-=\bigsqcup_{\gamma\in S^-}\Gamma^0_{\gamma}.$$
 
Assume that $a\ge b$. 
We have 
 $$S^+=\{\beta_i;\;1\le i\le s-1\}$$ and by Eq. \ref{decompSQ}
 \begin{align}O^+=\bigsqcup_{1\le i\le s-1}\Gamma_{\beta_i}^0=(SQ\setminus\{\beta_i\}_{1\le i\le s})\sqcup B_1^+\sqcup B_3^+.\label{decomp1Oplus}\end{align}
 We have $$S^-=\{-\beta'_k;\;1\le k\le[(3s-n+1)/2],\;-\beta''_{\ell};\;1\le\ell\le[(3s-n)/2]\}$$
 and
 \begin{align}O^-=&\bigsqcup_{1\le k\le[(3s-n+1)/2]}\Gamma_{-\beta'_k}^0\,\sqcup\bigsqcup_{1\le k\le[(3s-n)/2]}\Gamma_{-\beta''_k}^0\nonumber\\
 =&\,(B_{3,\,r}^-\setminus\{-\beta'_k\})\sqcup(B_{1,\,r}^-\setminus\{-\beta''_k\}).\label{decomp1Omoins}\end{align}
 We have
$$ S^m=\{\gamma_{s-j};\;0\le j\le n-2s-1,\;\delta_k;\;1\le k\le n-2s-1\}$$ and
\begin{align} O^m=&\bigsqcup_{0\le j\le n-2s-1}\Gamma_{\gamma_{s-j}}^0\,\sqcup\bigsqcup_{1\le k\le n-2s-1}\Gamma_{\delta_k}^0\nonumber\\
=&(R_1\setminus\{\gamma_{s-j}\})\sqcup(B_1^-\setminus B_{1,\,r}^-)\sqcup(R_2\setminus\{\delta_k\})\sqcup (B_3^-\setminus B_{3,\,r}^-)\sqcup (B_2\setminus T).\label{decomp1Om}\end{align}

 Assume $a<b$. We have
  $$S^+=\{\beta_i;\;1\le i\le s-1,\,\beta'_k;\;1\le k\le[(n-3s)/2]\}$$ and
  \begin{align}O^+=&\bigsqcup_{1\le i\le s-1}\Gamma_{\beta_i}^0\,\,\sqcup\bigsqcup_{1\le k\le[(n-3s)/2]}\Gamma_{\beta'_k}^0\nonumber\\
  =&(SQ\setminus\{\beta_i\}_{1\le i\le s})\sqcup B_1^+\sqcup B_3^+\sqcup (B_{2,\,r}^+\setminus\{\beta'_k\}).\label{decomp2Oplus}\end{align}
We have $$S^-=\{-\beta''_{\ell};\;1\le\ell\le[(n-3s-1)/2]\}$$ and
 \begin{align}O^-=&\bigsqcup_{1\le k\le[(n-3s-1)/2]}\Gamma_{-\beta''_k}^0\nonumber\\
 =&\,B_{2,\,r}^-\setminus\{-\beta''_k\}.\label{decomp2Omoins}\end{align}
 We have 
 $$S^m=\{\gamma_{s-j};\;0\le j\le s-1,\;\delta_k;\;1\le k\le s\}$$ and
\begin{align} O^m=&\bigsqcup_{0\le j\le s-1}\Gamma_{\gamma_{s-j}}^0\sqcup\bigsqcup_{1\le k\le s}\Gamma_{\delta_k}^0\nonumber\\
=&(R_1\setminus\{\gamma_{s-j}\})\sqcup (B_2^+\setminus B_{2,\,r}^+)\sqcup B_1^-\sqcup(R_2\setminus\{\delta_k\})\sqcup B_3^-\sqcup(B_2^-\setminus B_{2,\,r}^-).\label{decomp2Om}\end{align}

 Recall notation \ref{Salph} the set $S_{\alpha}$  for every root $\alpha\in O$. Since condition $({\bf C})$ is satisfied, we know that for every $\alpha\in O$, $\theta(\alpha)\in S_{\alpha}$. For any positive integer $k$
 we set, as in \cite[Sec. 4]{F01}, 
$$ O_k=\{\alpha\in O;\;\lvert S_{\alpha}\rvert=k\}.$$

As we already said, if $O=O_1$ then the restriction to $\g_O\times\g_O$ of the bilinear form $\widetilde\Phi_y$ is nondegenerate. Unfortunately it is not the case in general for our choice of Heisenberg sets.

 As in \cite[Sec. 4]{F01}, together with condition $({\bf C})$, we will require  the following condition $({\bf C'})$.
  
 {\bf Condition $\rm\bf (C')$}:
 \begin{align*}\begin{cases} O=O_1\sqcup O_2\sqcup O_3\\
 \alpha\in O_3\Longrightarrow \exists\,\widetilde\alpha\in S_{\alpha}\cap O_2\setminus\{\theta(\alpha)\};\;\theta\bigl(\widetilde\alpha\bigr)\in O_1\end{cases}\end{align*}

 Actually we will see that, in case $a\ge b$, one has $O=O_1\sqcup O_2\sqcup O_3$ and in case $a<b$, we have $O=O_1\sqcup O_2$.

 With condition $({\bf C'})$, we can then define a socalled {\it stationary root }(as in \cite[Sec.4]{F01}) which is a root $\alpha\in O$ for which there exists no $\beta\in O_3$ such that $\alpha=\widetilde\beta$ or $\alpha=\theta\bigl(\widetilde\beta\bigr)$ and for which the chain $C_{\alpha}$ passing through $\alpha$ (as defined in \cite[4.3.7]{F01}, see also \cite[Fig. 1, Sec. 4]{F01}) is not a loop.
 
 In other words setting $\alpha^0=\alpha$ and  $\alpha^1\in O$  such that $\alpha^1\in S_{\theta(\alpha)}$ with $\alpha^1=\alpha$ if $\theta(\alpha)\in O_1$ and otherwise $\alpha^1\neq\alpha$, and $\alpha^1\neq \widetilde{\theta(\alpha)}$ if moreover $\theta(\alpha)\in O_3$, we define inductively a sequence $(\alpha^i)_{i\in\mathbb N}$ of roots in $O$ constructed from the root $\alpha$. 
 
 Similarly setting $\alpha^{(0)}=\theta(\alpha)$, we define inductively a sequence $(\alpha^{(i)})_{i\in\mathbb N}$ of roots in $O$ constructed from the root $\theta(\alpha)$ such that $\alpha^{(i)}=\theta(\alpha)^i$ for any nonnegative integer $i$. With condition $({\bf C'})$, for a root $\alpha\in O_3$, there exists $\widetilde\alpha\in S_{\alpha}\cap O_2\setminus\{\theta(\alpha),\,\alpha^{(1)}\}$ such that $\theta\bigl(\widetilde\alpha\bigl)\in O_1$.
 We say that a root $\alpha\in O$ for which there exists no $\beta\in O_3$ such that $\alpha=\widetilde\beta$ or $\alpha=\theta\bigl(\widetilde\beta\bigr)$  is a stationary root if there exists $i_0\in\mathbb N$ such that $\alpha^{i_0}=\alpha^{i_0+1}$ and if there exists $j_0\in\mathbb N$ such that $\alpha^{(j_0)}=\alpha^{(j_0+1)}$ (we say in this case that the sequence $(\alpha^i)_{i\in\mathbb N}$, resp. $(\alpha^{(i)})_{i\in\mathbb N}$, is stationary).
 By \cite[Lem. 4.3.8]{F01} if one of the above sequences is stationary, then it is also true for the other one and then  any root in the chain $C_{\alpha}=\{\alpha^i,\,\theta(\alpha^i),\,\alpha^{(i)},\,\theta(\alpha^{(i)});\;i\in\mathbb N\}$ is a stationary root.
 Now recall the following Proposition.
 
 \begin{prop}{(\cite[Prop. 4.4.1]{F01})}\label{propnondeg}
 We assume that:
 
 \begin{enumerate}
 \item[(1)] $S_{\h_{\pi'}\oplus\h_{\Lambda}}$ is a basis for $(\h_{\pi'}\oplus\h_{\Lambda})^*$.\smallskip
 
 \item[(2)] If $\alpha\in O^+$ then $S_{\alpha}\cap O^+=\{\theta(\alpha)\}$.\smallskip
 
 \item[3)] If $\alpha\in O^-$ then $S_{\alpha}\cap O^-=\{\theta(\alpha)\}$.\smallskip
 
 \item[(4)] If $\alpha\in O^m$ then $\alpha$ is a stationary root.
  \end{enumerate}
 
 Let $y=\sum_{\gamma\in S}x_{-\gamma}$ and $\widetilde\Phi_y$ be the skew-symmetric bilinear form defined by $\widetilde\Phi_y(x,\,x')=K(y,\,[x,\,x']_{\widetilde\p})$ for all $x,\,x'\in\widetilde\p$.
 
 Then the restriction to $\g_O\times\g_O$ of $\widetilde\Phi_y$ is nondegenerate.

 \end{prop}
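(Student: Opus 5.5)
The plan is to prove that the radical of $\widetilde\Phi_{y\mid\g_O\times\g_O}$ is zero, working directly in the root-vector basis $\{x_\alpha\}_{\alpha\in O}$. Let $x=\sum_{\alpha\in O}c_\alpha x_\alpha$ lie in this radical. For every $\beta\in O$ we have
$$0=\widetilde\Phi_y(x,\,x_\beta)=\sum_{\alpha\in S_\beta}c_\alpha\,K\bigl(y,\,[x_\alpha,\,x_\beta]_{\widetilde\p}\bigr),$$
because $K(y,\,[x_\alpha,\,x_\beta]_{\widetilde\p})\neq 0$ forces $\alpha+\beta\in S$ and $[x_\alpha,\,x_\beta]_{\widetilde\p}\neq 0$, that is $\alpha\in S_\beta$. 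By condition $({\bf C})$ the term with $\alpha=\theta(\beta)$ is always present with a nonzero coefficient. So every $\beta\in O_1$ gives $c_{\theta(\beta)}=0$ at once. The goal is to propagate these vanishings through all of $O$.

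\emph{Step 1 (separating the three parts).} First I would use hypothesis (1) to fix $h\in\h_{\pi'}\oplus\h_{\Lambda}$ with $\gamma(h)=1$ for every $\gamma\in S$. Since $S$ restricts to a basis, the centre $\gamma$ of a pair $\{\alpha,\,\alpha'\}$ with $\alpha+\alpha'\in S$ is determined by the restrictions of $\alpha$ and $\alpha'$ to $\h_{\pi'}\oplus\h_{\Lambda}$. I would use this to control how $O^+$, $O^-$ and $O^m$ interact. In particular, I aim to show that a root of $O^+$ and a root of $O^-$ can only pair to an element of $S$ in a way compatible with the Heisenberg structure.

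\emph{Step 2 (the mixed part).} Next I treat $O^m$ by the chains $C_\alpha$. By hypothesis (4) every $\alpha\in O^m$ is stationary. Take the index $i_0$ with $\alpha^{i_0}=\alpha^{i_0+1}$, so that $\theta(\alpha^{i_0})\in O_1$ and $S_{\theta(\alpha^{i_0})}=\{\alpha^{i_0}\}$. The displayed equation for $\beta=\theta(\alpha^{i_0})$ gives $c_{\alpha^{i_0}}=0$. Going backwards along the chain, at each $\beta=\theta(\alpha^{i-1})$ the set $S_\beta$ is $\{\alpha^{i-1},\,\alpha^i\}$, so the equation at $\beta$ has only two terms and $c_{\alpha^{i}}=0$ gives $c_{\alpha^{i-1}}=0$. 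When $\theta(\alpha^{i-1})\in O_3$, the extra element $\widetilde{\theta(\alpha^{i-1})}$ appears in $S_\beta$. Condition $({\bf C'})$ says its partner $\theta(\widetilde{\theta(\alpha^{i-1})})$ lies in $O_1$, so the coefficient of $\widetilde{\theta(\alpha^{i-1})}$ has already been killed by the first observation, and the equation at $\beta$ again reduces to two terms. Running the same argument on the sequence $(\alpha^{(i)})$ kills every coefficient on the chain, hence $c_\alpha=0$ for all $\alpha\in O^m$. The roots of the form $\widetilde\beta$ or $\theta(\widetilde\beta)$, which are excluded from the stationarity definition, are handled separately: $c_{\theta(\widetilde\beta)}$ vanishes directly because $\widetilde\beta\in O_1$ by $({\bf C'})$, and $c_{\widetilde\beta}$ then vanishes from the equation at $\theta(\widetilde\beta)$.

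\emph{Step 3 (the parts $O^\pm$).} Once all $c_\alpha$ with $\alpha\in O^m$ vanish, the equation at $\beta\in O^+$ only involves $S_\beta\cap(O^+\sqcup O^-)$. By hypothesis (2) its $O^+$ part is $\{\theta(\beta)\}$. If $S_\beta\cap O^-$ is empty, this gives $c_{\theta(\beta)}=0$ immediately; hypothesis (3) gives the same conclusion on $O^-$. So everything reduces to showing that $S_\beta\cap O^-=\emptyset$ for $\beta\in O^+$, or at least that the cross terms form a triangular pattern.

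I expect this cross interaction between $O^+$ and $O^-$, together with making the chain propagation of Step 2 fully rigorous in the presence of $O_3$ roots, to be the main obstacle. The first thing I would try for the cross terms is the grading by $h$, combined with the fact that for $\beta\in O^+$ and $\alpha\in O^-$ the bracket $[x_\alpha,\,x_\beta]_{\widetilde\p}$ is nonzero only when $\alpha$ and $\beta$ lie in the same Levi block. If that does not settle it, I would order the pairs $\{\alpha,\,\theta(\alpha)\}$ by $\alpha(h)$ and show that the matrix of $\widetilde\Phi_y$ on $\g_{O^+\sqcup O^-}$ is block-triangular with invertible diagonal blocks, each diagonal block being the anti-diagonal of nonzero entries $K(y,\,[x_\alpha,\,x_{\theta(\alpha)}]_{\widetilde\p})$.
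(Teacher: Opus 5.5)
Your Step 2 is essentially right: you propagate zeros backwards along a stationary chain, and you kill the coefficient of the extra neighbour $\widetilde{\theta(\alpha^{i-1})}$ of an $O_3$-root through the equation at its partner in $O_1$. But the proposal does not prove the proposition. The interaction between $O^+$ and $O^-$, which you postpone to Step 3, is the real content, and both shortcuts you propose for it fail in this paper's own configuration. Hypotheses (2) and (3) only forbid non-$\theta$ partners with both ends in $O^+$, or both ends in $O^-$.

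Take $n=14$, $s=6$, so $3s-n=4$. The roots $\ep_1-\ep_2$ and $\ep_2-\ep_{14}$ are $\theta$-partners in $\Gamma^0_{\beta_1}\subset O^+$. The roots $\ep_3-\ep_1\in\Gamma^0_{-\beta''_1}$ and $\ep_{14}-\ep_{13}\in\Gamma^0_{-\beta'_1}$ lie in $O^-$. Yet $(\ep_1-\ep_2)+(\ep_3-\ep_1)=-\beta''_2\in S$ and $(\ep_2-\ep_{14})+(\ep_{14}-\ep_{13})=\beta_2\in S$, both with nonzero brackets in $\widetilde\p$. Hence $\ep_4-\ep_3,\ \ep_3-\ep_1,\ \ep_1-\ep_2,\ \ep_2-\ep_{14},\ \ep_{14}-\ep_{13},\ \ep_{13}-\ep_{10}$ is a path alternating $\theta$-edges and cross edges. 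So $S_\beta\cap O^-\neq\emptyset$ for some $\beta\in O^+$. Since $x_{\ep_2-\ep_{14}}\in\m$, cross brackets are also not confined to one Levi block.

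The grading by $h$ does not rescue the argument. Every pair whose sum lies in $S$, $\theta$-pair or cross pair, has $h$-degrees adding up to $1$. So the grading gives no triangularity separating the two kinds of entries.

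What is missing is an argument that the graph on $O$ with edges $\{\alpha,\beta\}$, $\beta\in S_\alpha$, has no closed path alternating $\theta$-edges and non-$\theta$-edges. Alternatively, you would need to show that such cycles cannot cancel the $\theta$-term of the Pfaffian. Hypothesis (4) with $({\bf C'})$ excludes such cycles through $O^m$. Cycles alternating between $\theta$-pairs of $O^+$ and $\theta$-pairs of $O^-$ via cross edges are not addressed at all. This is where hypothesis (1) has to do real work. For instance, linear independence of $S$ forces any other perfect matching to use each centre $\gamma$ exactly $\lvert\Gamma^0_\gamma\rvert/2$ times. Your Step 1 only announces such a use without deriving anything.

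There is also a slip in Step 2 that makes it depend on Step 3. You read $({\bf C'})$ backwards: it gives $\widetilde\beta\in O_2$ and $\theta(\widetilde\beta)\in O_1$. The equation at $\theta(\widetilde\beta)$ kills $c_{\widetilde\beta}$. But $c_{\theta(\widetilde\beta)}$ occurs only in the equation at $\widetilde\beta$, whose other term is $c_\beta$ with $\beta\in O_3$. So $c_{\theta(\widetilde\beta)}=0$ only once $c_\beta=0$ is known. In this paper some $O_3$-roots lie in $O^-$, for instance $\ep_3-\ep_1$ above, with $\widetilde\beta=\ep_1-\ep_{12}$. So this step also rests on the unproved cross-term analysis.
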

 
 \subsection{Conditions of the Proposition of nondegeneracy.}
 Recall the notation of the previous subsections.
 By Lemma \ref{basis} we already know that condition (1) of Proposition \ref{propnondeg} is satisfied. Moreover one can remark that, for a root $\alpha\in O$, if there exists only one root in $S$ on the same row and no root in $S$ on the same column, or vice versa, then $\alpha\in O_1$. If there exist two roots in $S$ on the same row or column then $\alpha\in O_1\sqcup O_2$. If there exist  three roots in $S$ on the same row or column, then $\alpha\in O_1\sqcup O_2\sqcup O_3$. One can check, on Figures \ref{Fig1} and \ref{Fig2} of subsection \ref{ST}, that we have, in case $a\ge b$, that $O= O_1\sqcup O_2\sqcup O_3$ and that, in case $a<b$, $O= O_1\sqcup O_2$. Moreover, if $\alpha\in(\Delta^+\setminus\Delta^+_{\pi'})\cap O$ then if $\beta\in S_{\alpha}$ we have necessarily that $\beta\in\Delta_{\pi'}$ in view of subsection \ref{defcontp} and then $\alpha+\beta\in S\cap(\Delta^+\setminus\Delta^+_{\pi'})$. Moreover by Eq. \ref{expSQ}, Eq. \ref{expR1} and Eq. \ref{expR2},  if $\alpha\in R_i$ ($i=1,\,2$) then $\alpha+\beta\in R_i$ and if $\alpha\in SQ$ then $\alpha+\beta\in SQ$. Then for a root $\alpha\in O\cap R_i$, resp. $\alpha\in O\cap SQ$, only the number $k$ of roots in $S\cap R_i$, resp. in $S\cap SQ$,  on the same row or column as $\alpha$ contribute for having $\alpha\in O_k$. 
 
 Observing Figures \ref{Fig1} and \ref{Fig2} of subsection \ref{ST}, we then can claim (in both cases $a\ge b$ or $a<b$) that $O\cap(\Delta^+\setminus\Delta^+_{\pi'})\subset O_1\sqcup O_2$. Moreover, in case $a<b$, one also has that $O\cap\Delta_{\pi'}\subset O_1\sqcup O_2$ and for the case $a\ge b$ we may observe that $O\cap(\Delta_{\pi'_2}\sqcup\Delta^+_{\pi'_1}\sqcup\Delta^+_{\pi'_3})\subset O_1\sqcup O_2$. Indeed in case $a\ge b$, a root $\alpha\in O\cap\Delta^+_{\pi'_1}$ cannot lie at the same time on the same row and on the same column as a root  in $S\cap B_1^-$. Moreover no remaining root in $B_{1,\,r}^-$  meets a root $\gamma_{s-j}$ (for $0\le j\le b-1$) on its row. Then $\alpha\in O\cap\Delta^+_{\pi'_1}$ can meet  on its row or column at most two roots in $S$ (one $-\beta''_k$ and one $\beta_{\ell}$, or one $\gamma_{s-j}$ and one $\beta_{\ell}$).  Hence such a root $\alpha\in O_1\sqcup O_2$.  Similarly a root $\alpha\in O\cap\Delta^+_{\pi'_3}$ cannot lie both on the same column and on the same row as a root in $S\cap B_3^-$. Moreover no remaining root in $B_{3,\,r}^-$  meets on its column a root $\delta_k$, for $1\le k\le b-1$. It follows that $\alpha\in O\cap\Delta^+_{\pi'_3}$ can meet on its row or column at most two roots in $S$ (one $-\beta'_k$ and one $\beta_{\ell}$ or one $\delta_k$ and one $\beta_{\ell}$). A root $\alpha\in O\cap\Delta_{\pi'_2}$ meets on its row or column at most two roots in $S$ (one $\gamma_{s-j}$ and one $\delta_k$).
 
 For case $a<b$, the claim follows also from the position of the remaining roots in $B_2$ since the sets $B_{2,\,r}^-$ and $B_{2,\,r}^+$ of remaining roots lie on the left of the $\gamma_{s-j}$ (for $0\le j\le s-1$) and above the $\delta_k$ (for $1\le k\le s$). We also remark that a root in $O\cap(B_{2,\,r}^-\sqcup B_{2,\,r}^+)$ cannot meet on its row or column more than two roots in $S\cap B_2$.

 We then have the following Lemma.
 
 \begin{lm}
 In both cases ($a\ge b$ or $a<b$) we have that $O\cap(\Delta^+\setminus\Delta^+_{\pi'})\subset O_1\sqcup O_2$ and in case $a<b$ we even have that $O=O_1\sqcup O_2$.
 Moreover in case $a\ge b$ we have that $O\cap(\Delta_{\pi'_2}\sqcup\Delta^+_{\pi'_1}\sqcup\Delta^+_{\pi'_3})\subset O_1\sqcup O_2$.
 
 \end{lm}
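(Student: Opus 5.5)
The proof is a counting argument on the matrix picture of Figures \ref{Fig1} and \ref{Fig2}. For $\alpha=\ep_i-\ep_j\in O$, a root $\beta\in S_{\alpha}$ must satisfy $\alpha+\beta\in S$ and $[x_{\alpha},x_{\beta}]_{\widetilde\p}\neq0$. Since $\alpha+\beta$ is a root, either $\beta=\ep_j-\ep_k$ with $\alpha+\beta=\ep_i-\ep_k$ (a root of $S$ on row $i$), or $\beta=\ep_k-\ep_i$ with $\alpha+\beta=\ep_k-\ep_j$ (a root of $S$ on column $j$). Hence $\lvert S_{\alpha}\rvert$ is at most the number of roots of $S$ lying on row $i$ or on column $j$, and in particular $\lvert S_\alpha\rvert\geq 1$ because $\theta(\alpha)\in S_\alpha$ by condition (\textbf{C}). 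It therefore suffices to show that, for each $\alpha$ in the claimed sets, at most two elements of $S$ lie on the row or the column of $\alpha$ and are actually reached through a bracket that does not vanish in the contraction.

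For $\alpha\in O\cap(\Delta^+\setminus\Delta^+_{\pi'})$ I use that $[\m^a,\m^a]_{\widetilde\p}=0$. This forces $\beta\in\Delta_{\pi'}$, and then Eqs. \ref{expSQ}, \ref{expR1} and \ref{expR2} show that $\alpha+\beta$ stays in the same piece ($SQ$, $R_1$ or $R_2$) as $\alpha$. So only the elements of $S$ in that piece and on the row or column of $\alpha$ count. I then run through the three pieces using the explicit lists of $S$.

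In $SQ$, the only elements of $S$ are the $\beta_i$, one per row and one per column, so at most two can be met.

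In $R_1$, the only elements of $S$ are the $\gamma_{s-j}$. They lie on a line parallel to the diagonal, so there is at most one per row and at most one per column, giving at most two in total.

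In $R_2$, the only elements of $S$ are the $\delta_k$. They lie on a line parallel to the antidiagonal, so again there is at most one per row and one per column.

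In each case this gives $O\cap(\Delta^+\setminus\Delta^+_{\pi'})\subset O_1\sqcup O_2$.

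For the roots of $O$ lying in the Levi blocks, I again count elements of $S$ on the row and column of $\alpha$, now within the block together with the adjacent pieces of $\m$.

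Case $a\ge b$.
\begin{itemize}
\item[] For $\alpha\in O\cap\Delta^+_{\pi'_1}$ (in $B_1^+$): $\alpha$ cannot share both its row and its column with roots $-\beta''_k$ of $B_{1,r}^-$, because these are antidiagonal in the top left corner. The rows of $B_{1,r}^-$ contain no $\gamma_{s-j}$, since the $\gamma_{s-j}$ occupy the bottom $b$ rows of $R_1$ while $B_{1,r}^-$ sits in the top $3s-n$ rows. So $\alpha$ meets at most one $\beta_\ell$ together with at most one of $-\beta''_k$ or $\gamma_{s-j}$.
\item[] For $\alpha\in O\cap\Delta^+_{\pi'_3}$: the argument is symmetric, with $-\beta'_k$ and $\delta_k$ in place of $-\beta''_k$ and $\gamma_{s-j}$.
\item[] For $\alpha\in O\cap\Delta_{\pi'_2}$: the block $B_2$ (minus its bottom row, which lies in $T$) only sees $\gamma_{s-j}$ in its columns and $\delta_k$ in its rows, one of each at most.
\end{itemize}

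Case $a<b$. The blocks $B_1^\pm$ and $B_3^\pm$ are handled as above. For $B_2$, the remaining sets $B_{2,r}^\pm$ lie to the left of all $\gamma_{s-j}$ and above all $\delta_k$, and within $B_{2,r}^\pm$ the cascade roots $\pm\beta'_k,\pm\beta''_k$ are antidiagonal. I check that a root of $O\cap B_{2,r}^\pm$ meets at most two elements of $S$ in total, which gives $O=O_1\sqcup O_2$.

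The main obstacle is this last bookkeeping. A row of $B_2$ can a priori see a $\beta'_k$, a $-\beta''_\ell$ and a $\delta$ simultaneously. To rule this out I will use the explicit index ranges of $\beta'_k$, $\beta''_k$ and $\delta_k$, and also check that brackets landing in $\m^a\times\m^a$ vanish, so that such a meeting does not count toward $S_\alpha$.
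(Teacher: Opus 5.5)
This is essentially the paper's own argument, at the same level of detail. You bound $\lvert S_\alpha\rvert$ by the number of elements of $S$ on the row or column of $\alpha$, use $[\m^a,\m^a]_{\widetilde\p}=0$ together with Eqs. \ref{expSQ}, \ref{expR1}, \ref{expR2} to keep only $S\cap SQ$, $S\cap R_1$ or $S\cap R_2$ for radical roots, and then inspect the Levi blocks.

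The step you defer for $a<b$ closes by the index check you propose, and the obstacle you anticipate does not occur:
\begin{itemize}
\item[] The rows of $B_2$ carrying $\beta'_k$, $-\beta''_t$ and $\delta_k$ are respectively $s+1,\ldots,s+[(n-3s)/2]$, then $n-2s-[(n-3s-1)/2],\ldots,n-2s-1$, then $n-2s,\ldots,n-s-1$. These ranges are pairwise disjoint.
\item[] The columns of $B_2$ carrying $-\beta''_t$, $\beta'_k$ and $\gamma_{s-j}$ are respectively $s+1,\ldots,s+[(n-3s-1)/2]$, then $n-2s+1-[(n-3s)/2],\ldots,n-2s$, then $n-2s+1,\ldots,n-s$. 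These are also pairwise disjoint.
\item[] Hence every row and every column of $B_2$ meets at most one element of $S$. This covers all of $O\cap B_2$, not only $O\cap B_{2,r}^\pm$ as you wrote.
\item[] Vanishing of brackets in $\m^a$ would not have helped here anyway. For $\alpha$ in the Levi factor, no bracket $[x_\alpha,x_\beta]_{\widetilde\p}$ vanishes.
\item[] For $a<b$ the blocks $B_1$ and $B_3$ are not handled ``as above'' but more simply. No element of $S$ lies in a column of $B_1$ or in a row of $B_3$, so each such root meets at most two elements of $S$.
\end{itemize}
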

 
 Now for case $a\ge b$ it remains to look at roots in $O\cap(\Delta^-_{\pi'_1}\cup\Delta^-_{\pi'_3})$. Observing Figure \ref{Fig1}  in subsection \ref{ST}, we can see that $O\cap(\Delta^-_{\pi'_1}\cup\Delta^-_{\pi'_3})\subset O_1\sqcup O_2\sqcup O_3$. It remains to verify that condition $({\rm \bf C'})$ is satisfied. It is the following lemma.
 
 \begin{lm}
 
 In case $a\ge b$, condition $({\rm \bf C'})$ is satisfied for any root $\alpha\in O_3$.
 
 \end{lm}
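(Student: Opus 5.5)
We have to show that, in case $a\ge b$, every $\alpha\in O_3$ admits some $\widetilde\alpha\in S_{\alpha}\cap O_2\setminus\{\theta(\alpha)\}$ with $\theta(\widetilde\alpha)\in O_1$. By the previous Lemma, $O_3\subset O\cap(\Delta^-_{\pi'_1}\cup\Delta^-_{\pi'_3})$, so we only examine negative roots in the two extremal blocks $B_1^-$ and $B_3^-$. The symmetry of Figure \ref{Fig1} with respect to the antidiagonal (exchanging the roles of $\gamma_{s-j}$ and $\delta_k$, of $-\beta''_k$ and $-\beta'_k$, of $B_1$ and $B_3$) lets us treat $B_1^-$ in detail and transport the argument to $B_3^-$; the shift by one between the $\gamma$'s ($b$ of them) and the $\delta$'s ($b-1$ of them) only changes indices.

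\textbf{Locating $O_3$ in $B_1^-$.} Take $\alpha=\ep_i-\ep_j\in O\cap\Delta^-_{\pi'_1}$, so $1\le j<i\le s$. Since $\widetilde\Phi_y$ only involves $y=\sum_{\gamma\in S}x_{-\gamma}$, the elements $\beta\in S_{\alpha}$ are the $\beta\in O$ with $\alpha+\beta\in S$. Such a $\beta$ shares either the row $i$ or the column $j$ of $\alpha$ with the corresponding element of $S$. The elements of $S$ that can occur on row $i$ or column $j$ are of three kinds: a $\beta_\ell$ in $SQ$ (with $\ell=i$, giving $\beta=\ep_j-\ep_{n+1-i}$), a $\gamma_{\ell}$ in $R_1$ (with $\ell=i$, giving $\beta=\ep_j-\ep_{n-s-i}$), or a $-\beta''_k$ in $B_{1,r}^-$. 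The last kind is impossible for $\alpha\notin B_{1,r}^-$. Moreover $\gamma_i\in S$ exactly when $i\ge s-b+1=3s-n+1$. Hence $\alpha\in O_3$ forces $\alpha\in B_1^-\setminus B_{1,r}^-$, which means $\alpha\in O^m$ with $\theta(\alpha)\in R_1$. It also forces three distinct contributions, namely from $\beta_i$, from $\gamma_i$, and from $\gamma_j$ or $\beta_j$ on column $j$. I would list these explicitly, identifying which of the candidates equals $\theta(\alpha)=\gamma_{\cdot}-\alpha$ (an element of $R_1$), and then take $\widetilde\alpha=\ep_j-\ep_{n+1-i}$, the partner coming from $\beta_i\in S^+$.

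\textbf{Checking the two required properties.} I need to show $\widetilde\alpha\in O_2$ and $\theta(\widetilde\alpha)\in O_1$.
\begin{itemize}
\item $\widetilde\alpha=\ep_j-\ep_{n+1-i}$ lies in $SQ\cap\Gamma^0_{\beta_j}$, with $\theta(\widetilde\alpha)=\ep_i-\ep_j\cdot$ adjusted. More precisely, $\theta(\widetilde\alpha)=\beta_j-\widetilde\alpha=\ep_{n+1-i}-\ep_{n+1-j}\in B_3^+$.
\item By the discussion preceding the previous Lemma, a root of $SQ$ meets only roots of $S\cap SQ$ on its row and column, namely $\beta_j$ and $\beta_i$. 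So $\lvert S_{\widetilde\alpha}\rvert=2$.
\item For $\theta(\widetilde\alpha)\in B_3^+$, I will check that its row $n+1-i$ and its column $n+1-j$ contain exactly one usable element of $S$, namely $\beta_j$ on column $n+1-j$. Row $n+1-i$ of $B_3$ would need a $-\beta'_k$ or a $\delta_k$ ending in that column range. I expect these to be excluded precisely because $\alpha\notin B_{1,r}^-$, which transported by the antidiagonal symmetry says that the relevant row lies outside $B_{3,r}^-$ and outside the $\delta$-diagonal.
\end{itemize}

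\textbf{Main obstacle and fallback.} The main obstacle is this index bookkeeping, particularly near the boundary of $B_{1,r}^-$ and $B_{3,r}^-$ and for the diagonal $\gamma_{s-j}$ versus $\delta_k$ off by one, where the choice of $\widetilde\alpha$ might need to switch to the partner coming from $\gamma_i$ instead of $\beta_i$. If the $\beta_i$-partner fails at some boundary index, I would try the $\gamma_i$-partner. That partner lies in $R_1$, where $O\cap R_1\subset O_1\sqcup O_2$ already holds, and its $\theta$ lies in $B_2^+$; there $O\cap\Delta_{\pi'_2}\subset O_1\sqcup O_2$, so I would verify that it meets only one element of $S$. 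The case of $B_3^-$ then follows by the symmetric argument.
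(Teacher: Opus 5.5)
Your choice $\widetilde\alpha=\ep_j-\ep_{n+1-i}$ is the paper's. It is the partner of $\alpha=\ep_i-\ep_j$ supplied by $\beta_i$, it lies in $SQ\cap\Gamma^0_{\beta_j}$, and $\theta(\widetilde\alpha)=\ep_{n+1-i}-\ep_{n+1-j}$. Your check that $\widetilde\alpha\in O_2$ is also the paper's. The gap is in locating $O_3$, which is the real content of the lemma. Two of your claims fail:
\begin{enumerate}
\item Since $\ep_p-\ep_q$ sits in row $p$ and column $q$, the root $-\beta''_k=\ep_{3s-n+1-k}-\ep_k$ lies in \emph{column} $k$. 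So every $\alpha=\ep_i-\ep_j$ with $j\le[(3s-n)/2]$, whatever its row, gets the partner $\ep_{3s-n+1-j}-\ep_i\in O$ from $-\beta''_j$. Your claim that this contribution is impossible for $\alpha\notin B_{1,\,r}^-$ is false.
\item $\gamma_j$ and $\beta_j$ lie on \emph{row} $j$, and $\gamma_j-\alpha$, $\beta_j-\alpha$ are not roots. So the third contribution you invoke never occurs.
\end{enumerate}

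The correct picture is that $O_3\cap\Delta^-_{\pi'_1}$ consists of two families. The first is the roots with $3s-n+1\le i\le s-1$ and $j\le[(3s-n)/2]$; these meet $\gamma_i$, $\beta_i$ and $-\beta''_j$. The second is the roots of $B_{1,\,r}^-\setminus S$ lying on the row of some $-\beta''_k$ and on the column of some $-\beta''_{k'}$; these meet those two and $\beta_i$. Your conclusion $O_3\cap B_1^-\subset B_1^-\setminus B_{1,\,r}^-$ misses the second family entirely. For $s=5$, $n=11$, the root $\ep_3-\ep_1\in\Gamma^0_{-\beta''_1}$ has $S_{\ep_3-\ep_1}=\{\ep_4-\ep_3,\,\ep_1-\ep_2,\,\ep_1-\ep_9\}$.

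This is not cosmetic, because the check $\theta(\widetilde\alpha)\in O_1$ rests on the bound $j\le[(3s-n)/2]$ that your analysis throws away. That bound is what excludes a $\delta_k$ or a $-\beta'_k$ from column $n+1-j$. For $j\ge 3s-n+2$, the root $\delta_{s+1-j}$ lies in that column with partner $\ep_{n-2s-1+j}-\ep_{n+1-i}\in O$, and then $\theta(\widetilde\alpha)\in O_2$. Separately, the bound $i>[(3s-n+1)/2]$, which holds in both families, excludes $-\beta'_i$ from row $n+1-i$. Your stated reason, that $\alpha\notin B_{1,\,r}^-$, is neither true on $O_3$ nor the operative one.

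Your fallback does not work either. For $\alpha\in B_1^-\setminus B_{1,\,r}^-$ one has $\alpha\in\Gamma^0_{\gamma_i}$. So the $\gamma_i$-partner $\ep_j-\ep_{n-2s+i}$ (not $\ep_j-\ep_{n-s-i}$) is $\theta(\alpha)$ itself, which $({\bf C'})$ forbids, and its $\theta$ is $\alpha$, not a root of $B_2^+$.

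Once the $-\beta''$ contributions are restored, your $\widetilde\alpha$ works uniformly for both families; this is exactly the paper's argument.
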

 
 \begin{proof}
 
 Consider case $a\ge b$ and
 let us give firstly the expansion of the roots in $\Delta^-_{\pi'_1}\sqcup\Delta^-_{\pi'_3}$. Roots in $\Delta^-_{\pi'_1}$ are of the form :
 $\ep_j-\ep_i$ with $1\le i<j\le s$ and roots in $\Delta^-_{\pi'_3}$ are of the form :
 $\ep_j-\ep_i$ with $n-s+1\le i<j\le n$. We will show the statement for a root $\gamma\in O_3\cap\Delta^-_{\pi'_1}$, the proof for roots in $O_3\cap\Delta^-_{\pi'_3}$ being very similar.
 
 Consider $\gamma=\ep_j-\ep_i$ with $1\le i<j\le 3s-n$, $\gamma\in O_3$, that is, we firstly consider a root which lies in the triangle of remaining roots in the upper left corner of $B_1^-$. Recall that the elements in $S\cap\Delta^-_{\pi'_1}$ are the $-\beta''_k$ with $1\le k\le[(3s-n)/2]$, with $\beta''_k=\ep_k-\ep_{3s-n+1-k}$. Suppose that there exist $k_1,\,k_2$, $1\le k_1,\,k_2\le[(3s-n)/2]$, such that $i=k_1$ and $j=3s-n+1-k_2$, namely that $\gamma$ lies on the same row as $-\beta''_{k_2}$ and on the same column as $-\beta''_{k_1}$. Moreover $\gamma$ also lies on the same row as $\beta_j$ and one may observe that there exists no $0\le\ell\le n-2s-1$ such that $\gamma$  lies on the same row as $\gamma_{s-\ell}$ since $\gamma$ lies in the triangle of remaining roots in $B_1^-$ which do not meet on their row any $\gamma_{s-\ell}$. Hence we have indeed that $\gamma\in O_3$ and $S_{\gamma}=\{\ep_i-\ep_{3s-n+1-j},\,\ep_{3s-n+1-i}-\ep_j,\,\ep_i-\ep_{n+1-j}\}$. Actually one can set $\widetilde\gamma=\ep_i-\ep_{n+1-j}\in\Gamma_{\beta_i}^0$ which lies in $O_2$ since $\widetilde\gamma\in SQ$.  Moreover $\theta(\widetilde\gamma)=\ep_{n+1-j}-\ep_{n+1-i}\in\Delta^+_{\pi'_3}$. One can then check that $\theta(\widetilde\gamma)$ cannot lie on the same row or column as a $-\beta'_k$, $1\le k\le [(3s-n+1)/2]$, nor on the same column as a $\delta_k$, $1\le k\le n-2s-1$. Hence $\theta(\widetilde\gamma)$ only lies on the same column as $\beta_i$ and then $\theta(\widetilde\gamma)\in O_1$.
 
 Similarly suppose now that $\gamma=\ep_j-\ep_i$ with $1\le i\le s-1$, $3s-n+1\le j\le s$ and $i<j$ and that $\gamma\in O_3$. Then $\gamma$ lies on the same column as $-\beta''_i$ (and then $1\le i\le[(3s-n)/2]$) and on the same row as $\gamma_j$ and $\beta_j$. As above, we have that $\widetilde\gamma=\ep_i-\ep_{n+1-j}\in(\Delta^+\setminus\Delta^+_{\pi'})\cap SQ$ and hence $\widetilde\gamma\in O_2$. Moreover $\theta(\widetilde\gamma)=\ep_{n+1-j}-\ep_{n+1-i}\in\Delta^+_{\pi'_3}$ and one can check that $\theta(\widetilde\gamma)$ does not lie on the same row or column as a $-\beta'_k$ nor on the same column as a $\delta_k$. Hence $\theta(\widetilde\gamma)\in O_1$.
 \end{proof}
 
 \begin{Rq}\rm If a root $\gamma\in O_3\cap \Delta^-_{\pi'_3}$ then $\gamma=\ep_j-\ep_i$ with $n+1-s\le i<j\le n$, $\widetilde\gamma=\ep_{n+1-i}-\ep_j\in(\Delta^+\setminus\Delta^+_{\pi'})\cap SQ$ and $\theta(\widetilde\gamma)=\ep_{n+1-j}-\ep_{n+1-i}\in\Delta^+_{\pi'_1}\cap O_1$.
 \end{Rq}
 
 \begin{lm}
 Every root $\alpha\in O^m$ is stationary.
 \end{lm}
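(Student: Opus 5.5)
The plan is to follow the chains $C_\alpha$ explicitly, using the row/column description of the sets $S_\alpha$ obtained above. For $\alpha\in O^m$, the set $S_\alpha$ is determined by the roots of $S$ lying on the same row or column as $\alpha$ and contributing a nonzero bracket in $\widetilde\p$.

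We first treat the roots of $O^m$ lying in $\m$, that is, in $R_1\setminus\{\gamma_{s-j}\}$ and $R_2\setminus\{\delta_k\}$. For such a root $\alpha$, the discussion preceding the previous lemma shows that any $\beta\in S_\alpha$ lies in $\Delta_{\pi'}$ and $\alpha+\beta\in S\cap R_i$. Since the $\gamma_{s-j}$ lie on one diagonal of $R_1$ and the $\delta_k$ on one antidiagonal of $R_2$, a root of $R_i$ shares its row or column with at most two such centres.

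We then describe the steps of the chain concretely. Starting from $\alpha\in O^m\cap R_1$, say $\alpha=\ep_{s-j}-\ep_\ell$ to the left of $\gamma_{s-j}$, we have $\theta(\alpha)=\ep_\ell-\ep_{n-s-j}\in B_2^+$. The next root $\alpha^1\in S_{\theta(\alpha)}$ is obtained by pairing $\theta(\alpha)$ with the other centre on its row or column, necessarily some $\delta_k$ or $\gamma_{s-j'}$ with $j'\neq j$. The key observation is a \emph{monotonicity} along the chain: each passage $\alpha^i\mapsto\theta(\alpha^i)\mapsto\alpha^{i+1}$ strictly moves the index towards the boundary of the rectangle (increasing $j$, or increasing $k$). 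We aim to check this separately for the four types of pairs: $R_1$ with $B_2^+$, $R_1$ with $B_1^-$, $R_2$ with $B_3^-$, and $R_2$ with $B_2^-$.

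Because these indices are bounded by $c$, the sequences $(\alpha^i)$ and $(\alpha^{(i)})$ cannot cycle. They stop once they reach a root of $O_1$, for instance a root whose row or column meets no other element of $S$. Typical such roots are those near $B_{1,r}^-$ or $B_{3,r}^-$, those in the bottom row of $B_2^-$ (which lies in $T$), or those next to $\gamma_{s-c+1}$ or $\delta_{c-1}$. At such a root the sequence becomes stationary, with $\alpha^{i_0}=\alpha^{i_0+1}$. By \cite[Lem. 4.3.8]{F01} it suffices to show stationarity of one of the two sequences, and this then gives that every root of the chain $C_\alpha$ is stationary. This covers all roots of $O^m$ in the Levi blocks as well, since every such root lies in some chain through a root of $R_1\cup R_2$.

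We must also verify the first condition in the definition of stationary root. No $\alpha\in O^m$ may equal $\widetilde\beta$ or $\theta(\widetilde\beta)$ for some $\beta\in O_3$. By the previous lemma and the Remark, such $\widetilde\beta$ lie in $SQ$, and $\theta(\widetilde\beta)$ lies in $\Delta^+_{\pi'_1}\cup\Delta^+_{\pi'_3}$, so both are in $O^+$. The roots $\beta\in O_3$ themselves lie in $B_1^-\cup B_3^-$, and we check that when they lie in $O^m$ their own chains still satisfy the requirement, via the modified choice of $\alpha^1$ excluding $\widetilde{\theta(\alpha)}$.

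The main obstacle is the monotonicity step: tracking exactly which $\gamma_{s-j'}$ or $\delta_k$ shares a row or column with $\theta(\alpha^i)$ in $B_1^-$, $B_2^\pm$ and $B_3^-$, with the boundary cases $a=b$ and $a\ge b$ versus $a<b$. I would first write the chain explicitly for the case $a<b$ in coordinates $\ep_p-\ep_q$. I would then adapt it to $a\ge b$, where the extra pairing involving the $O_3$ roots near $B_{1,r}^-$ and $B_{3,r}^-$ must be excluded.
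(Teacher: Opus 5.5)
Your strategy is the paper's: reduce via \cite[Lem. 4.3.8]{F01} to roots of $O^m$ in $R_1\cup R_2$, note that roots of $O_1$ are trivially stationary, and follow each chain explicitly with an index that moves in one direction until an $O_1$ root is reached. The gap is in the mechanism you propose for the monotonicity, which is the step you yourself call the main obstacle. You assume the next root is always produced by some $\delta_k$ or $\gamma_{s-j'}$, and you plan to check monotonicity only on the four $\theta$-pairings inside $O^m$. That is not what happens. When $\theta(\alpha^i)$ lies in $B_3^-$ or $B_1^-$, the other element of $S$ on its column or row is a centre $\beta_i$ of $SQ$. The chain therefore leaves $O^m$ and runs through $SQ$ and $B_1^+$ or $B_3^+$, that is, through $O^+$.

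Concretely (case $a\ge b$), take $\alpha=\ep_{s-j}-\ep_{n-s-\ell}\in\Gamma^0_{\gamma_{s-j}}$ with $j<\ell$. The chain runs as follows:
\begin{itemize}
\item $\theta(\alpha)\in B_2^+$, on the row of $\delta_\ell$;
\item $\alpha^1=\ep_{n-s-j}-\ep_{n-s+\ell}\in\Gamma^0_{\delta_j}$;
\item $\theta(\alpha^1)\in B_3^-$, on the column of $\beta_{s+1-j}$;
\item $\alpha^2=\ep_{s+1-j}-\ep_{n-s+\ell}\in\Gamma^0_{\beta_{s+1-\ell}}\subset SQ$;
\item $\theta(\alpha^2)\in B_1^+$, on the row of $\gamma_{s-(\ell-1)}$;
\item $\alpha^3=\ep_{s-(j-1)}-\ep_{n-s-(\ell-1)}$.
\end{itemize}
The pair $(j,\ell)$ is unchanged in the passage $R_1\to R_2$: row of $\gamma_{s-j}$ and column of $\gamma_{s-\ell}$ become row of $\delta_j$ and column of $\delta_\ell$. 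It drops by one only through the $SQ$ detour. So ``each passage strictly moves the index'' is false. A check restricted to $R_1/B_2^+$, $R_1/B_1^-$, $R_2/B_3^-$, $R_2/B_2^-$ misses exactly the step where the index changes.

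Termination also depends on $SQ$ and $T$. Along this sequence the chain stops at $j=0$, because the would-be partner $\ep_{n-s}-\ep_{n-s+\ell}$ lies in $T$ (bottom row of $R_2$). It stops at $j=1$ because $\theta(\alpha^1)$ lies on the column of $\beta_s\notin S$.

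To repair the plan, add to your bookkeeping the pairs $SQ/B_1^+$ and $SQ/B_3^+$ coming from the $\Gamma_{\beta_i}$, $1\le i\le s-1$. Then measure the index once per full cycle $R_1\to B_2^+\to R_2\to B_3^-\to SQ\to B_1^+\to R_1$. For $j>\ell$ the analogous cycle is $R_1\to B_1^-\to SQ\to B_3^+\to R_2\to B_2^-\to R_1$. An induction on $j$ then gives the lemma exactly as in the paper. Following the other sequence instead, $(j,\ell)$ increases and the chain stops when $\ell$ reaches its largest admissible value, so either direction works once the $SQ$ step is included.

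Your separate remark is correct: $\widetilde\beta$ and $\theta(\widetilde\beta)$ lie in $O^+$, so the $O_3$ clause of the definition of a stationary root is automatic for roots of $O^m$.
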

 
 \begin{proof}

 By \cite[Lem. 4.3.8]{F01}  it suffices to prove the Lemma for a root $\alpha\in O^m\cap(\Delta^+\setminus\Delta^+_{\pi'})$. Moreover by Eq. \ref{decomp1Om} and Eq. \ref{decomp2Om}, it suffices to prove the lemma for a root $\alpha\in O^m\cap R_i$ for $i=1,\,2$. If $\alpha\in O_1\cap R_i$ then $\alpha$ is stationary, since in this case $\alpha^{(1)}=\theta(\alpha)=\alpha^{(0)}$.
 
 Assume that $a\ge b$. 
Take a root $\alpha\in O_2\cap R_1$ that is, $\alpha$ lies on the same row  as a $\gamma_{s-j}$ and on the same column as a $\gamma_{s-\ell}$ for $0\le j\neq \ell\le n-2s-1$.
  If $j>\ell$ then $\alpha\in\Gamma^0_{\gamma_{s-\ell}}$ and $\alpha$ lies on the same row and on the right of $\gamma_{s-j}$ and on the same column and above $\gamma_{s-\ell}$. If $j<\ell$ then $\alpha\in\Gamma^0_{\gamma_{s-j}}$ and $\alpha$ lies on the same row and on the left of $\gamma_{s-j}$ and on the same column and below $\gamma_{s-\ell}$.
 
 We will suppose $j<\ell$, the case $j>\ell$ being very similar. In other words $$\alpha=\ep_{s-j}-\ep_{n-s-\ell}\in\Gamma^0_{\gamma_{s-j}}\cap R_1.$$
 
Since $\alpha\in\Gamma^0_{\gamma_{s-j}}$, we have  $\theta(\alpha)=\ep_{n-s-\ell}-\ep_{n-s-j}\in \Delta^+_{\pi'_2}$ and $\theta(\alpha)$ lies on the same row (and on the left) of $\delta_{\ell}=\ep_{n-s-\ell}-\ep_{n-s+\ell}$.
 
 The only root $\beta\in S_{\theta(\alpha)}\setminus\{\alpha\}$  should be $\beta=\ep_{n-s-j}-\ep_{n-s+\ell}$, so that $\beta+\theta(\alpha)=\delta_{\ell}$.
 But if $j=0$ then $\beta\in T$ and then $\theta(\alpha)\in O_1$ and $\alpha^1=\alpha$ that is, $\alpha$ is a stationary root.
 
 If $j\ge 1$, then $\alpha^1=\beta\in R_2\cap \Gamma^0_{\delta_j}$ since $\alpha^1$ lies on the same row and on the right of $\delta_j$. Then $\theta(\alpha^1)=\ep_{n-s+\ell}-\ep_{n-s+j}\in\Delta^-_{\pi'_3}$.
 One then can check that $\theta(\alpha^1)$ does not lie on the same row or column as a $-\beta'_{k}$, for any $1\le k\le[(3s-n+1)/2]$, since $\theta(\alpha^1)$ lies in $B_3^-$ on the left and above the bottom right corner $B_{3,\,r}^-$ of remaining roots. If $j=1$ then $\theta(\alpha^1)$ lies on the same column as $\beta_s$ which does not lie in $S$ (and of course $\theta(\alpha^1)$ lies on the same column as $\delta_1$). Hence $\theta(\alpha^1)\in O_1$ in this case, since only $\alpha^1\in O$ is such that $\alpha^1+\theta(\alpha^1)\in S$. It follows that $\alpha^2=\alpha^1$ and then $\alpha$ is stationary.
 
 Now assume that $j\ge 2$. Then $\theta(\alpha^1)$ lies on the same column as $\beta_{s+1-j}=\ep_{s+1-j}-\ep_{n-s+j}$ and then $\alpha^2=\ep_{s+1-j}-\ep_{n-s+\ell}\in SQ\cap \Gamma^0_{\beta_{s+1-\ell}}$ since $\alpha^2$ lies on the same column as $\beta_{s+1-\ell}$. Then $\theta(\alpha^2)=\ep_{s+1-\ell}-\ep_{s+1-j}\in\Delta^+_{\pi'_1}$. But now $\theta(\alpha^2)$ lies on the same row as $\gamma_{s-(\ell-1)}$. It follows that $$\alpha^3=\ep_{s-(j-1)}-\ep_{n-s-(\ell-1)}\in\Gamma^0_{\gamma_{s-(j-1)}}\cap R_1.$$ We have then obtained a new root of the same form as $\alpha$ with $j$ and $\ell$ replaced respectively by $j-1$ and $\ell-1$. Using what we have shown for $j=1$ above, one can conclude by induction that the root $\alpha$ is stationary.
 
 Now assume that $a\ge b$ and take a root $\alpha\in O_2\cap R_2$ that is, $\alpha$ lies on the same row as a $\delta_j$ and on the same column as a $\delta_{\ell}$ for $1\le j\neq\ell\le n-2s-1$.  
 We will suppose that $j<\ell$, the case $j>\ell$ being very similar. In other words, one has
 $$\alpha=\ep_{n-s-j}-\ep_{n-s+\ell}\in\Gamma^0_{\delta_j}\cap R_2.$$
More precisely $\alpha$ lies on the same row and on the right of $\delta_j$ and on the same column and below $\delta_{\ell}$. Then $\theta(\alpha)=\ep_{n-s+\ell}-\ep_{n-s+j}\in\Delta^-_{\pi'_3}$. One can check that there exists no positive integer $k\le[(3s-n+1)/2]$ such that $\theta(\alpha)$ lies on the same row or column as $-\beta'_k$, since $\theta(\alpha)$ lies on the left and above the triangle $B_{3,\,r}^-$ of remaining roots in $B_3^-$.
 
 If $j=1$, then $\theta(\alpha)$ lies on the first column of $B_3^-$ and then $\theta(\alpha)\in O_1$, hence $\alpha^1=\alpha$ and the root $\alpha$ is stationary. Now suppose that $j\ge 2$. Then $\theta(\alpha)$ lies also on the same column as $\beta_{s+1-j}$ and then $\alpha^1=\ep_{s+1-j}-\ep_{n-s+\ell}\in SQ\cap\Gamma^0_{\beta_{s+1-\ell}}$. Then $\theta(\alpha^1)=\ep_{s+1-\ell}-\ep_{s+1-j}\in\Delta^+_{\pi'_1}$ and one checks again that there exists no positive integer $k\le[(3s-n)/2]$ such that $\theta(\alpha^1)$ lies on the same row or on the same column as any $-\beta''_k$, since $\theta(\alpha^1)$ lies on the right and below the upper left corner $B_{1,\,r}^-$ of remaining roots in $B_1^-$.
 One may observe that $\theta(\alpha^1)$ is also on the same row as $\gamma_{s-(\ell-1)}$ and then $\alpha^2=\ep_{s+1-j}-\ep_{n-s-\ell+1}\in R_1\cap\Gamma^0_{\gamma_{s-(j-1)}}$. Then $\theta(\alpha^2)=\ep_{n-s-\ell+1}-\ep_{n-s-j+1}\in\Delta^+_{\pi'_2}$, which lies on the same row as $\delta_{\ell-1}$. Hence one has
 $$\alpha^3=\ep_{n-s-(j-1)}-\ep_{n-s+(\ell-1)}\in R_2\cap\Gamma^0_{\delta_{j-1}}.$$
 Using the case $j=1$ we deduce by induction that $\alpha$ is a stationary root.
 
 The case $a<b$ can be treated in a same manner, replacing $0\le j\neq\ell\le n-2s-1$ by $0\le j\neq\ell\le s-1$ in the case when $\alpha\in O_2\cap R_1$ and $1\le j\neq\ell\le n-2s-1$ by $1\le j\neq\ell\le s$ in the case when $\alpha\in O_2\cap R_2$. This completes the proof of the Lemma. 
 \end{proof}
 
 \begin{lm}
 
 Let $\alpha\in O^\pm$. Then $S_{\alpha}\cap O^\pm=\{\theta(\alpha)\}$.
 
 \end{lm}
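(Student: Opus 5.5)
The plan is to reduce the statement to the disjointness and \emph{Kostant cascade} properties of Lemma \ref{strong}, applied inside the sub-root-systems in which the sets $O^+$ and $O^-$ live. Let $\alpha\in O^\pm$ and $\beta\in S_\alpha\cap O^\pm$. Then $\alpha+\beta\in S$ and $[x_\alpha,\,x_\beta]_{\widetilde\p}\neq 0$, and we must show $\beta=\theta(\alpha)$. I would first determine which element of $S$ the sum $\alpha+\beta$ can be, using the explicit decompositions \ref{decomp1Oplus}, \ref{decomp1Omoins}, \ref{decomp2Oplus}, \ref{decomp2Omoins}.

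\textbf{Case $O^+$.} For $a\ge b$, $O^+\subset (SQ\setminus\{\beta_i\})\sqcup B_1^+\sqcup B_3^+$. Since $\m^a$ is abelian in $\widetilde\p$, $\alpha$ and $\beta$ cannot both lie in $SQ$. By \ref{expSQ}, a sum of a root of $B_1^+$ and a root of $B_3^+$ is never a root, and a sum of two roots of $B_1^+$ (or of $B_3^+$) stays in $\Delta^+_{\pi'}$, which contains no element of $S^+$. So one of $\alpha,\,\beta$ lies in $SQ$ and the other in $B_1^+\sqcup B_3^+$, and $\alpha+\beta\in S\cap SQ=\{\beta_i;\;1\le i\le s-1\}$. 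All roots involved belong to $\Delta^+=\bigsqcup_i{\rm H}_{\beta_i}$, and $\Gamma^0_{\beta_i}\subset{\rm H}^0_{\beta_i}$. Lemma \ref{strong} (iv) then gives: if $\alpha\in\Gamma^0_{\beta_i}$, $\beta\in\Gamma^0_{\beta_j}$ and $\alpha+\beta=\beta_k$, then $k=\min(i,\,j)=i=j$ and $\beta=\beta_i-\alpha=\theta(\alpha)$. This is exactly the claim.

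For $a<b$ there are the additional sets $\Gamma^0_{\beta'_k}\subset B_{2,\,r}^+\subset\Delta^+_{\pi'_2}$. Sums involving $B_2^+$ and $SQ\sqcup B_1^+\sqcup B_3^+$ are either not roots or land in $R_1\sqcup R_2$, and $S\cap(R_1\sqcup R_2)$ consists only of the $\gamma$'s and $\delta$'s, which are not in $S^+$. Hmm — I must check this carefully: a $\gamma_{s-j}$ could be a sum of something in $B_1^+$ and something in $R_1$, but $R_1\cap O^+=\emptyset$. So the only cross-sums to rule out are those between $B_1^+$ or $B_3^+$ and $B_2^+$, and these are not roots. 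Inside $B_{2,\,r}^+$, the statement is Lemma \ref{strong} (iv) for the Kostant cascade of $\mathfrak r'_{2,\,r}$, since $\Gamma_{\beta'_k}={\rm H}'_{\beta'_k}$.

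\textbf{Case $O^-$.} These sets are $-{\rm H}''_{\beta''_k}$ inside $B_{1,\,r}^-$ or $B_{2,\,r}^-$, and $-{\rm H}'_{\beta'_k}$ inside $B_{3,\,r}^-$. The brackets are the usual ones in the Levi factor. Roots from different blocks $B_1$, $B_3$ never sum to a root. A sum within one block equal to some $-\beta''_k$ (respectively $-\beta'_k$) reduces, after negation, to Lemma \ref{strong} (iv) for the cascade of $\mathfrak r'_{1,\,r}$, $\mathfrak r'_{3,\,r}$ or $\mathfrak r'_{2,\,r^-}$. Here one also notes that the remaining roots form the full root subsystem of these simple subalgebras, so Lemma \ref{strong} applies.

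The main obstacle is the $O^+$ bookkeeping. I need to verify that a sum $\alpha+\beta$ with $\alpha,\,\beta\in O^+$ can never equal an element of $S$ of another type (a $\gamma$, a $\delta$, or $\pm\beta'$, $-\beta''$). This requires a case check on positions, rows and columns, using \ref{expSQ}, \ref{expR1} and \ref{expR2}. I would organize it by the coefficients of $\alpha_s$ and $\alpha_{n-s}$: roots in $O^+$ have these coefficients equal to $(1,1)$ or $(0,0)$, and sums must match the coefficient pattern of the target element of $S$.
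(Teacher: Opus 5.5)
Your proposal is correct and is essentially the paper's argument. Block bookkeeping shows that $\alpha+\beta$ is either some $\beta_i$ (with one root in $SQ$ and the other in $B_1^+\sqcup B_3^+$) or $\pm$ a Kostant cascade root of one of $\mathfrak r'_{1,\,r}$, $\mathfrak r'_{2,\,r}$, $\mathfrak r'_{2,\,r^-}$, $\mathfrak r'_{3,\,r}$, and then Lemma \ref{strong} (iv) forces $\beta=\theta(\alpha)$. Your $(\alpha_s,\,\alpha_{n-s})$-coefficient bookkeeping is a tidy way to exclude the $\gamma$'s and $\delta$'s, but two points need tightening: what must be excluded is $\alpha+\beta\in S$, not just $S^+$, and the sum of a root of $B_2^+$ with a root of $SQ\sqcup B_1^+\sqcup B_3^+$ is never a root, so the hedge ``or land in $R_1\sqcup R_2$'' is unnecessary.
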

 
 \begin{proof}
 
 Assume firstly that $a\ge b$ and recall Eq. \ref{decomp1Oplus}. Let $\alpha\in O^+\cap SQ$ and $\beta\in S_{\alpha}\cap O^+$. Then $\beta\in B_1^+\sqcup B_3^+$ and $\alpha+\beta\in SQ\cap S=\{\beta_k\}_{1\le k\le s-1}$. By Eq. \ref{decomp1Oplus}, there exist positive integers $1\le i,\,j\le s-1$ such that $\alpha\in\Gamma^0_{\beta_i}\subset{\rm H}^0_{\beta_i}$ and $\beta\in\Gamma^0_{\beta_j}\subset{\rm H}^0_{\beta_j}$. Let $k$ be the positive integer such that $\alpha+\beta=\beta_k$. By Lemma \ref{strong} (iv) if $i\le j$ then $k=i$ and $\beta=\beta_i-\alpha=\theta(\alpha)$ and if $i\ge j$ then $k=j$ and $\alpha=\beta_j-\beta=\theta(\beta)$. In both cases, since $\theta$ is an involution, we obtain that $\beta=\theta(\alpha)$. 
 
 Suppose now that $\alpha\in O^+\cap(B_1^+\sqcup B_3^+)$ and let $\beta\in S_{\alpha}\cap O^+$. Then $\beta$ cannot belong to $B_1^+\sqcup B_3^+$ since there exists no root in $S\cap(B_1^+\sqcup B_3^+)$. Then $\beta\in O^+\cap SQ$ and $\alpha\in S_{\beta}\cap O^+$. We conclude by the above.
 
Suppose that $\alpha\in O^-$ and let $\beta\in S_{\alpha}\cap O^-$. Recall Eq. \ref{decomp1Omoins}. Since $\alpha+\beta\in\Delta$, it is not possible that $\alpha\in B_{3,\,r}^-$ and $\beta\in B_{1,\,r}^-$ or vice-versa. Hence there exist positive integers $i,\,j$, $1\le i,\,j\le [(3s-n+1)/2]$, such that $\alpha\in\Gamma^0_{-\beta'_i}$ and $\beta\in\Gamma^0_{-\beta'_j}$ or there exist positive integers $i,\,j$, $1\le i,\,j\le[(3s-n)/2]$, such that $\alpha\in\Gamma^0_{-\beta''_i}$ and $\beta\in\Gamma^0_{-\beta''_j}$. In both cases, Lemma \ref{strong} (iv) allows to conclude that $\beta=\theta(\alpha)$.
 
 Finally assume that $a<b$. Recall Eq. \ref{decomp2Oplus} and let $\alpha\in O^+\cap SQ$ and $\beta\in S_{\alpha}\cap O^+$. Then $\beta\in B_1^+\sqcup B_3^+\sqcup B_{2,\,r}^+$. If $\beta\in B_1^+\sqcup B_3^+$ we may conclude as above, by Eq. \ref{decompSQ}. Moreover it is not possible that $\beta\in B_{2,\,r}^+$ since otherwise $\alpha+\beta\in S\cap SQ$, which would imply that there exists some root in $S\cap SQ$ on the same line or column as $\beta$, which is not possible.
 
 Now if $\alpha\in O^+\cap(B_1^+\sqcup B_3^+)$ and $\beta\in S_{\alpha}\cap O^+$, we conclude as in case $a\ge b$.
 Suppose that $\alpha\in O^+\cap B_{2,\,r}^+$ and let $\beta\in S_{\alpha}\cap O^+$. Then $\alpha\in S_{\beta}\cap O^+$ and by the above one has necessarily that $\beta\in B_{2,\,r}^+\cap O^+$ too. Lemma \ref{strong} (iv) allows to conclude that $\beta=\theta(\alpha)$.
 
 Finally let $\alpha\in O^-$ and $\beta\in S_{\alpha}\cap O^-$. Recall Eq. \ref{decomp2Omoins}. By Lemma \ref{strong} (iv) we can conclude that $\beta=\theta(\alpha)$.
  \end{proof}
  
  \subsection{}\label{res}
  By the previous subsection, all conditions of Proposition \ref{propnondeg} are satisfied. Then the restriction to $\g_O\times\g_O$ of the bilinear form $\widetilde\Phi_y$ is nondegenerate. It follows that conditions (i), (ii), (iii) and (iv) of Lemma \ref{lmAP} are also satisfied. We have then obtained the following Lemma.
  \begin{lm}\label{herenondeg}
  With our notation, we have that:
  \begin{equation*}{\rm ad}^*\widetilde{\p_{\Lambda}}(y)\oplus\g_{-T}=\widetilde{\p_{\Lambda}}^*\end{equation*}
  and then, since $\dim\g_{-T}={\rm index}\,\widetilde{\p_{\Lambda}}$, $y$ is a regular element in $\widetilde{\p_{\Lambda}}^*$. Moreover there exists a unique element $h\in\h_{\pi'}\oplus\h_{\Lambda}$ such that, for all $\gamma\in S$, we have $\gamma(h)=1$. Hence $(h,\,y)$ is an adapted pair for $\widetilde{\p_{\Lambda}}$.
  \end{lm}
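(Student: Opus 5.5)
The plan is to read Lemma \ref{herenondeg} as a direct application of Lemma \ref{lmAP}, using the set $S$, the Heisenberg sets $\Gamma_\gamma$ ($\gamma\in S$) and the set $T$ built in subsection \ref{ST}. We check the four hypotheses (i)--(iv) of Lemma \ref{lmAP} in turn, and then obtain the regularity of $y$ and the adapted pair from the conclusion of that lemma.

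\textbf{Hypotheses (i)--(iii).}
\begin{itemize}
\item[(i)] This is exactly Lemma \ref{basis}.
\item[(ii)] The Heisenberg sets $\Gamma_\gamma$ are pairwise disjoint by construction. Their union together with $T$ is $\Delta^+\sqcup\Delta^-_{\pi'}=\Delta(\pi')$; this is the summary at the end of subsection \ref{ST}.
\item[(iii)] We count $T=\{\ep_s-\ep_{n-s+1},\;\ep_{n-s}-\ep_k;\;s+1\le k\le n,\,k\ne n-s\}$ directly. This gives $1+(n-s-1)=n-s$ elements, which equals ${\rm index}\,\p_{\Lambda}$ by Lemma \ref{indexp}.
\end{itemize}

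\textbf{Hypothesis (iv).} This is the substantive point: the restriction of $\widetilde\Phi_y$ to $\g_O\times\g_O$ must be nondegenerate. We obtain it from Proposition \ref{propnondeg}, whose conditions are checked in the previous subsection.
\begin{itemize}
\item[(1)] This is again Lemma \ref{basis}.
\item[(2), (3)] These are given by the lemma stating that $S_\alpha\cap O^\pm=\{\theta(\alpha)\}$ for $\alpha\in O^\pm$.
\item[(4)] This is the lemma stating that every root of $O^m$ is stationary.
\end{itemize}
Condition $({\bf C})$ holds by construction. Condition $({\bf C'})$ follows from two earlier lemmas: one shows $O=O_1\sqcup O_2$, or $O\subset O_1\sqcup O_2\sqcup O_3$ when $a\ge b$; the other exhibits $\widetilde\alpha$ for $\alpha\in O_3$. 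These are exactly what Proposition \ref{propnondeg} needs for stationarity to make sense. The main obstacle would be this nondegeneracy step, but here it is already packaged in those lemmas, so this step reduces to collecting them.

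\textbf{Conclusion.} Lemma \ref{lmAP} now yields ${\rm ad}^*\widetilde{\p_{\Lambda}}(y)\oplus\g_{-T}=\widetilde{\p_{\Lambda}}^*$. Hence
\[
\dim {\rm ad}^*\widetilde{\p_{\Lambda}}(y)=\dim\widetilde{\p_{\Lambda}}-\lvert T\rvert .
\]
By Lemma \ref{indexindex2}, $\lvert T\rvert={\rm index}\,\widetilde{\p_{\Lambda}}$, so the coadjoint orbit of $y$ has the minimal codimension and $y$ is regular. (Alternatively, the Remark after Lemma \ref{lmAP} gives regularity using only that the index cannot decrease under contraction.)

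For $h$, we use that the restrictions of the elements of $S$ form a basis of $(\h_{\pi'}\oplus\h_{\Lambda})^*$ (Lemma \ref{basis}). This gives a unique $h\in\h_{\pi'}\oplus\h_{\Lambda}$ with $\gamma(h)=1$ for all $\gamma\in S$. Each $x_{-\gamma}$ is an $\h$-weight vector of weight $-\gamma$, so ${\rm ad}\,h(y)=-\sum_{\gamma\in S}\gamma(h)x_{-\gamma}=-y$. Finally, $h$ is ${\rm ad}$-semisimple because it lies in the Cartan subalgebra and $\widetilde{\p_{\Lambda}}^*$ decomposes into $\h$-weight spaces. Therefore $(h,y)$ is an adapted pair.
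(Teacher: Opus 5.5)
Your proposal is correct and follows essentially the same route as the paper. The paper likewise gets the nondegeneracy of $\widetilde\Phi_y$ on $\g_O\times\g_O$ from Proposition \ref{propnondeg}, whose hypotheses are supplied by Lemma \ref{basis} and the lemmas of the preceding subsection, and then reads everything off Lemma \ref{lmAP}, with (i)--(iii) coming from Lemma \ref{basis}, the decomposition of subsection \ref{ST}, and $\lvert T\rvert=n-s$; you simply spell out the routine checks (the count of $T$, ${\rm ad}\,h(y)=-y$, uniqueness and semisimplicity of $h$) that the paper leaves implicit.
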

  
 To check the end of Lemma \ref{lmAP}, it remains to compute the formal character of $Y\bigl(\widetilde{\p_{\Lambda}}\bigr)$.
  
  \subsection{The formal character of $Y\bigl(\widetilde{\p_{\Lambda}}\bigr)$.}\label{formch}

Recall subsection \ref{ST} that $T=(\Delta^+\sqcup\Delta^-_{\pi'})\setminus\bigsqcup_{\gamma\in S}\Gamma_{\gamma}$ is equal to 
\begin{align*}T=\{\ep_s-\ep_{n-s+1},\;\ep_{n-s}-\ep_k;\;s+1\le k\le n,\,k\neq n-s\}.\end{align*} 

For every $\gamma\in T$, we have to compute $t(\gamma)\in\mathbb QS$ such that $\gamma+t(\gamma)$ vanishes on $\h_{\pi'}\oplus\h_{\Lambda}$. In other words, by what we said in subsection \ref{truncCartan}, we have to compute $t(\gamma)\in\mathbb QS$ such that $\gamma+t(\gamma)$ is proportional to $\varpi_s+\varpi_{n-s}$.

Moreover, since we will deduce that actually $Y\bigl(\widetilde{\p_{\Lambda}}\bigr)$ has $y+\g_{-T}$ as a Weierstrass section, we want also to know the degree of each homogeneous generator of the polynomial algebra $Y\bigl(\widetilde{\p_{\Lambda}}\bigr)$. By \cite[Remark 3.7.3]{F01} if for each $\gamma\in T$, we have that $t(\gamma)=\sum_{\alpha\in S}m_{\alpha,\,\gamma}\alpha$ with $m_{\alpha,\,\gamma}\in\mathbb N$, then $\lvert t(\gamma)\rvert =\sum_{\alpha\in S} m_{\alpha,\,\gamma}$ is such that $\partial_{\gamma}=1+\lvert t(\gamma)\rvert$ is the degree of each homogeneous generator of weight $\gamma+t(\gamma)$. \smallskip

Recall the set $S$ in subsection \ref{ST}.\smallskip

 Let $\gamma=\ep_s-\ep_{n+1-s}=\beta_s$. By Eq. \ref{sumortho}, we have $t(\beta_s)=\sum_{i=1}^{s-1}\beta_i\in\mathbb NS$ is such that $\beta_s+t(\beta_s)=\varpi_s+\varpi_{n-s}$. Moreover
$\partial_{\gamma}=s$.\smallskip

 Assume firstly that $a\ge b$. \smallskip

Let $\gamma=\ep_{n-s}-\ep_k\in T$ with $s+1\le k\le n-s-1$. Then
$$t(\gamma)=\gamma_s+\delta_{n-s-k}+\sum_{i=1}^{n-s-k-1}(\delta_i+\gamma_{s-i})+\sum_{j=1}^{2s-n+k}\beta_j\in\mathbb NS$$
is such that $\gamma+t(\gamma)=\sum_{j=1}^s\ep_j-\sum_{j=n+1-s}^n\ep_j=\varpi_s+\varpi_{n-s}$. Moreover $\partial_{\gamma}=n-k+1$.\smallskip

Let $\gamma=\ep_{n-s}-\ep_{n-s+k}\in T$ with $1\le k\le n-2s$. Then
$$t(\gamma)=\gamma_s+\sum_{i=1}^{s-k}\beta_i+\sum_{i=1}^{k-1}(\delta_i+\gamma_{s-i})\in\mathbb NS$$
is such that $\gamma+t(\gamma)=\varpi_s+\varpi_{n-s}$. Moreover $\partial_{\gamma}=s+k$.\smallskip

Let $\gamma=\ep_{n-s}-\ep_{n-s+k}\in T$ with $n-2s+1\le k\le s$. Write $k=s-u$ with $0\le 2u<3s-n$. Then
$$t(\gamma)=\gamma_s+\sum_{i=1}^{n-2s-1}(\delta_i+\gamma_{s-i})+\sum_{i=1}^{3s-n-u}\beta_i+\sum_{i=1}^u(-\beta''_i)+\sum_{i=1}^{u+1}(-\beta'_i)+\sum_{i=1}^u\beta_i\in\mathbb NS$$
is such that $\gamma+t(\gamma)=\varpi_s+\varpi_{n-s}.$ Moreover $\partial_{\gamma}=n+s-2k+1$.\smallskip

Let $\gamma=\ep_{n-s}-\ep_{n-s+k}\in T$ with $k=s-u$, $2u\ge 3s-n$ and $u\le 3s-n-1$. Then
\begin{align*}t(\gamma)=\gamma_s+\sum_{i=1}^{n-2s-1}(\delta_i+\gamma_{s-i})+\sum_{i=1}^u\beta_i+\sum_{i=1}^{3s-n-u}(-\beta''_i)+\sum_{i=1}^{3s-n-u}(-\beta'_i)+\\
\sum_{i=1}^{3s-n-u}\beta_i\in\mathbb NS\end{align*}
 is such  that $\gamma+t(\gamma)=\varpi_s+\varpi_{n-s}.$ Moreover $\partial_{\gamma}=3s-n+2k$.\smallskip
  
  Finally assume that $a<b$.\smallskip
  
  Let $\gamma=\ep_{n-s}-\ep_{s+k}\in T$ with $2\le 2k\le n-3s$. Then
  $$t(\gamma)=\sum_{i=0}^{s-1}\gamma_{s-i}+\sum_{i=1}^s\delta_i+\sum_{i=1}^k\beta'_i+\sum_{i=1}^{k-1}(-\beta''_i)\in\mathbb NS$$ is such that
  $\gamma+t(\gamma)=\varpi_s+\varpi_{n-s}.$ Moreover $\partial_{\gamma}=2s+2k$.\smallskip
  
  Let $\gamma=\ep_{n-s}-\ep_{s+k}\in T$ with $1\le k\le n-3s$ and $2k>n-3s$. Then
  $$t(\gamma)=\sum_{i=0}^{s-1}\gamma_{s-i}+\sum_{i=1}^s\delta_i+\sum_{i=1}^{n-3s-k}\beta'_i+\sum_{i=1}^{n-3s-k}(-\beta''_i)\in\mathbb NS$$ is such that
  $\gamma+t(\gamma)=\varpi_s+\varpi_{n-s}.$ Moreover $\partial_{\gamma}=1+2n-4s-2k$.\smallskip
  
  Let $\gamma=\ep_{n-s}-\ep_{s+k}\in T$ with $n-3s<k\le n-2s-1$. Then
  $$t(\gamma)=\gamma_s+\sum_{i=1}^{n-2s-k}\delta_i+\sum_{i=1}^{n-2s-k-1}\gamma_{s-i}+\sum_{i=1}^{n-2s-k}\beta_i\in\mathbb NS$$ is such that
  $\gamma+t(\gamma)=\varpi_s+\varpi_{n-s}.$ Moreover $\partial_{\gamma}=1+3n-6s-3k$.\smallskip
  
  Let $\gamma=\ep_{n-s}-\ep_{s+k}\in T$ with $n-2s<k\le n-s$. Then
  $$t(\gamma)=\gamma_s+\sum_{i=1}^{k-(n-2s)-1}(\delta_i+\gamma_{s-i})+\sum_{i=1}^{n-s-k}\beta_i\in\mathbb NS$$  is such that
  $\gamma+t(\gamma)=\varpi_s+\varpi_{n-s}.$ Moreover $\partial_{\gamma}=k-n+3s$.

  By Eq. \ref{upperbound} we have then, in both cases ($a\ge b$ or $a<b$) 
  \begin{lm}\label{caracbornesup}
  $${\rm ch}\,Y\bigl(\widetilde{\p_{\Lambda}}\bigr)\le (1-e^{\varpi_s+\varpi_{n-s}})^{-(n-s)}.$$
  \end{lm}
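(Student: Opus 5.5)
This is an application of the upper-bound part of Lemma \ref{lmAP}, Eq. \ref{upperbound}. The plan has three steps.

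First, the hypotheses (i)--(iv) of Lemma \ref{lmAP} must hold for our sets $S$, $T$ and $\Gamma_\gamma$. By Lemma \ref{herenondeg} they do: (i) is Lemma \ref{basis}; (ii) is the decomposition $\Delta^+\sqcup\Delta^-_{\pi'}=\bigsqcup_{\gamma\in S}\Gamma_\gamma\sqcup T$ of subsection \ref{ST}; (iii) is $\lvert T\rvert=n-s={\rm index}\,\p_{\Lambda}$ from Lemma \ref{indexp}; and (iv) is the nondegeneracy obtained from Proposition \ref{propnondeg} in subsection \ref{res}.

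Second, we check the extra hypotheses needed for Eq. \ref{upperbound}. For each $\gamma\in T$ we need $t(\gamma)\in\mathbb NS$ and $\gamma+t(\gamma)\neq 0$. Take the explicit $t(\gamma)$ listed in subsection \ref{formch}, case by case according to $a\ge b$ or $a<b$ and the subranges of $k$. For each one we verify two things:
\begin{itemize}
\item the coefficients are nonnegative integers, which holds by inspection since each $t(\gamma)$ is written as a sum of elements of $S$;
\item $\gamma+t(\gamma)=\sum_{j=1}^s\ep_j-\sum_{j=n+1-s}^n\ep_j$.
\end{itemize}
The second point is a telescoping computation in the $\ep_j$ coordinates. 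The chain $\gamma_s,\delta_1,\gamma_{s-1},\delta_2,\dots$ successively moves the index $n-s$ outward, and the $\beta_i$ (and $\mp\beta'_i,\beta''_i$ in the remaining corners) fill in the missing $\ep_j$ and $-\ep_{n+1-j}$. By Eq. \ref{weightclassique} this vector equals $\varpi_s+\varpi_{n-s}$, which is nonzero. It vanishes on $\h_{\pi'}\oplus\h_{\Lambda}$ by subsection \ref{truncCartan}, so by uniqueness of $t(\gamma)$ in $\mathbb QS$ (a consequence of Lemma \ref{basis}) it is indeed the required element.

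Third, apply Eq. \ref{upperbound}. Every factor is $(1-e^{\varpi_s+\varpi_{n-s}})^{-1}$, and there are $\lvert T\rvert=n-s$ of them, which gives the stated bound.

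The main obstacle is the second step: checking the telescoping identities in all subcases. This is delicate in the boundary ranges, namely $k=s-u$ with $2u<3s-n$ versus $2u\ge 3s-n$ when $a\ge b$, and $2k\le n-3s$ versus $2k>n-3s$ when $a<b$. There one must confirm that the indices of $\beta'_i$ and $\beta''_i$ stay within their defined ranges $[(3s-n+1)/2]$, $[(3s-n)/2]$ (respectively $[(n-3s)/2]$, $[(n-3s-1)/2]$), so that $t(\gamma)$ really lies in $\mathbb NS$. I would handle this by writing each sum in $\ep$-coordinates and checking the ranges directly, treating the extreme values of $u$ and $k$ separately.
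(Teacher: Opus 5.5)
Your proposal follows the paper's argument exactly: hypotheses (i)--(iv) of Lemma \ref{lmAP} come from Lemma \ref{herenondeg}, the explicit $t(\gamma)\in\mathbb NS$ of subsection \ref{formch} satisfy $\gamma+t(\gamma)=\varpi_s+\varpi_{n-s}\neq 0$, and Eq. \ref{upperbound} then gives $\lvert T\rvert=n-s$ identical factors. One caution for your telescoping check: in the case $a<b$ with $n-3s<k\le n-2s-1$, the listed sum $\sum_{i=1}^{n-2s-k}\beta_i$ is a typo and must be replaced by $\sum_{i=1}^{3s-n+k}\beta_i$; for instance, with $s=3$, $n=11$, $k=4$ the listed formula misses $\beta_2$. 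The corrected $t(\gamma)$ still lies in $\mathbb NS$, so the bound is unaffected, and only the degree changes, to $1+n-s-k$.
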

  It remains to compare this upper bound with the lower bound constructed in \cite{F00}.
  
  \subsection{The lower bound.} 
  
  In \cite[Thm. 9.9.2]{F00} we have obtained an inclusion of an $\h$-module and an algebra, say $\mathscr B$, into the algebra generated by symmetric semi-invariants $Sy\bigl(\widetilde\p\bigr)$. 
  In other words we have that
  $$\mathscr B\subset Sy\bigl(\widetilde\p\bigr)=S\bigl(\widetilde\p\bigr)^{\widetilde\p'}$$ by \cite[Remark 2.3.1]{F01}.
  
  Moreover by Eq. \ref{semitrunc} we have that
  $$Sy\bigl(\widetilde\p\bigr)=Y\bigl(\widetilde\p_{\Lambda}\bigr)=S\bigl(\widetilde\p_{\Lambda}\bigr)^{\widetilde\p'}.$$
  
  Finally we have that
  $$\widetilde\p_{\Lambda}\subset\widetilde{\p_{\Lambda}}$$ by \cite[Lem. 2.6.4]{F01}. Then by the above, we have that
  $$Sy\bigl(\widetilde\p\bigr)\subset S\bigl(\widetilde{\p_{\Lambda}}\bigr)^{\widetilde\p'}.$$
  
  Moreover
  the weight vectors in $\mathscr B$ have their weight belonging to $\Bbbk\Lambda(\p)$ by \cite[Prop. 8.2.2]{F00} and \cite[7.1]{FJ2}. Hence their weight vanishes on $\h_{\Lambda}$ by Eq. \ref{deftrunc}. It follows that
  $$\mathscr B\subset S\bigl(\widetilde{\p_{\Lambda}}\bigr)^{\widetilde\p'\oplus\h_{\Lambda}}=Y\bigl(\widetilde{\p_{\Lambda}}\bigr).$$
  
  As in \cite[Prop. 9.10.1]{F00} we deduce that
  $$\prod_{O_{\alpha}\in E(\pi')}\bigl(1-e^{\delta_{O_{\alpha}}}\bigr)^{-1}\le{\rm ch}\,Y\bigl(\widetilde{\p_{\Lambda}}\bigr)$$
  where $\delta_{O_{\alpha}}$ is given by Eq. \ref{poidsclas}.
  
  By Eq. \ref{weightclassique} and comparing with Lemma \ref{caracbornesup}
  we obtain the following.
  
  \begin{lm}\label{caracequal}
  
  We have that 
  $${\rm ch}\,Y\bigl(\widetilde{\p_{\Lambda}}\bigr)=(1-e^{\varpi_s+\varpi_{n-s}})^{-(n-s)}.$$
  \end{lm}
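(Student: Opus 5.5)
The plan is to squeeze ${\rm ch}\,Y\bigl(\widetilde{\p_{\Lambda}}\bigr)$ between the two bounds already obtained and check that they coincide. For the upper bound I invoke Lemma \ref{caracbornesup}, which gives ${\rm ch}\,Y\bigl(\widetilde{\p_{\Lambda}}\bigr)\le(1-e^{\varpi_s+\varpi_{n-s}})^{-(n-s)}$. That lemma is itself the upper bound (\ref{upperbound}) of Lemma \ref{lmAP}, applied with the set $T$ of subsection \ref{ST}. Its hypotheses hold by Lemma \ref{herenondeg}, and the explicit elements $t(\gamma)\in\mathbb NS$ listed in subsection \ref{formch} satisfy $\gamma+t(\gamma)=\varpi_s+\varpi_{n-s}\neq 0$ for each of the $\lvert T\rvert=n-s$ roots $\gamma\in T$. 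If needed I would spot-check a couple of these identities, for instance $\beta_s+\sum_{i<s}\beta_i=\varpi_s+\varpi_{n-s}$ via Eq. \ref{sumortho}.

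For the lower bound I use the inequality just derived from \cite{F00}:
$$\prod_{O_{\alpha}\in E(\pi')}\bigl(1-e^{\delta_{O_{\alpha}}}\bigr)^{-1}\le{\rm ch}\,Y\bigl(\widetilde{\p_{\Lambda}}\bigr).$$
This rests on the inclusion $\mathscr B\subset Y\bigl(\widetilde{\p_{\Lambda}}\bigr)$. That inclusion holds because the weights of $\mathscr B$ lie in $\Bbbk\Lambda(\p)$ and therefore vanish on $\h_{\Lambda}$, while invariance under $\widetilde\p'$ is given by $\mathscr B\subset Sy\bigl(\widetilde\p\bigr)=S\bigl(\widetilde\p_{\Lambda}\bigr)^{\widetilde\p'}\subset S\bigl(\widetilde{\p_{\Lambda}}\bigr)$.

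It then remains to evaluate the left-hand side. By Eq. \ref{weightclassique}, every $\delta_{O_{\alpha}}$ equals $\varpi_s+\varpi_{n-s}$. By subsection \ref{weight}, $\lvert E(\pi')\rvert=n-s$: the orbits are the $s-1$ pairs $\{\alpha_k,\alpha_{n-s+k}\}$, the $n-2s-1$ singletons in $\pi'_2$, and the two singletons $\{\alpha_s\}$ and $\{\alpha_{n-s}\}$, which together give $(s-1)+(n-2s-1)+2=n-s$. So the lower bound is exactly $(1-e^{\varpi_s+\varpi_{n-s}})^{-(n-s)}$. Comparing it with the upper bound coefficientwise gives equality for every weight, which is the claim.

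The main obstacle is not in this final comparison, which is formal. It lies upstream, in justifying the lower bound for $Y\bigl(\widetilde{\p_{\Lambda}}\bigr)$ rather than for $Sy\bigl(\widetilde\p\bigr)$. In particular one must make sure that the argument of \cite[Prop. 9.10.1]{F00} produces $n-s$ algebraically independent weight vectors in $\mathscr B$, so that the product formula really bounds the character from below. I would take this from \cite{F00} as stated, since it holds independently of whether $3s=n$.
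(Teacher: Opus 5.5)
Your proposal is correct and follows the paper's own argument. You take the upper bound of Lemma \ref{caracbornesup}, which comes from Eq. \ref{upperbound} with the $t(\gamma)\in\mathbb NS$ of subsection \ref{formch}, and squeeze it against the lower bound coming from $\mathscr B\subset Y\bigl(\widetilde{\p_{\Lambda}}\bigr)$ as in \cite[Prop. 9.10.1]{F00}; the two coincide because every $\delta_{O_\alpha}$ equals $\varpi_s+\varpi_{n-s}$ and $\lvert E(\pi')\rvert=n-s$.
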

  
  By the end of Lemma \ref{lmAP} (see Eq. \ref{WS}) and Lemma \ref{casdiff} we obtain the following.
  
  \begin{thm}\label{thmWS}
  Let $n$ and $s$ be positive integers such that $2s<n$.
  Let $\p$ be the standard parabolic subalgebra of $\mathfrak s\mathfrak l_n(\Bbbk)$ whose standard Levi factor $\mathfrak r$ has two extremal blocks of size $s$ and one central block of size $n-2s$. Let $\widetilde{\p_{\Lambda}}=(\mathfrak r'\oplus\h_{\Lambda})\ltimes\m^a$ be the In\"on\"u-Wigner contraction of the canonical truncation $\p_{\Lambda}$ of $\p$ according to the decomposition $\p_{\Lambda}=(\mathfrak r'\oplus\h_{\Lambda})\oplus\m$ where $\mathfrak r'$ is the derived subalgebra of $\mathfrak r$, $\m$ is the nilpotent radical of $\p$ and $\h_{\Lambda}=\p_{\Lambda}\cap\h^{\pi\setminus\pi'}$, where $\h^{\pi\setminus\pi'}$ is the centre of $\mathfrak r$. Let $\widetilde\p=\mathfrak r\ltimes\m^a$ be the In\"on\"u-Wigner contraction of $\p$ with respect to the decomposition $\p=\mathfrak r\oplus\m$.
  \begin{enumerate}
\item[(i)]  The algebra $Y\bigl(\widetilde{\p_{\Lambda}}\bigr)$ of symmetric invariants in $S\bigl(\widetilde{\p_{\Lambda}}\bigr)$ under adjoint action has a Weierstrass section and hence is a polynomial $\Bbbk$-algebra. Its number of algebraically independent homogeneous generators is equal to $n-s$. Each of them has a weight equal to $\varpi_s+\varpi_{n-s}$ and a degree computed in subsection \ref{formch}. 
  
\item[(ii)]  In particular, when the two extremal blocks are not of the same size as the central block, the algebra $Sy\bigl(\widetilde\p\bigr)$ generated by symmetric semi-invariants in $S\bigl(\widetilde\p\bigr)$ under adjoint action is equal to the algebra $Y\bigl(\widetilde{\p_{\Lambda}}\bigr)$ and hence has a Weierstrass section and is a polynomial $\Bbbk$-algebra in $n-s$ generators.
\end{enumerate}
  \end{thm}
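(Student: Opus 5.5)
The proof assembles results already established in this section; no new computation is needed beyond checking that the hypotheses of Lemma \ref{lmAP} are met in full.

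For (i), I would first invoke Lemma \ref{herenondeg}. It says that the set $S$ and the disjoint Heisenberg sets $\Gamma_\gamma$, $\gamma\in S$, built in subsection \ref{ST} satisfy conditions (i)--(iv) of Lemma \ref{lmAP}:
\begin{itemize}
\item condition (i) is Lemma \ref{basis};
\item condition (ii) is the decomposition $\Delta^+\sqcup\Delta^-_{\pi'}=\bigsqcup_{\gamma\in S}\Gamma_\gamma\sqcup T$;
\item condition (iii) is $\lvert T\rvert=n-s={\rm index}\,\p_{\Lambda}$ by Lemma \ref{indexp};
\item condition (iv) is the nondegeneracy obtained from Proposition \ref{propnondeg}, once conditions (2)--(4) have been verified in the preceding lemmas.
\end{itemize}
Hence $(h,y)$ is an adapted pair and ${\rm ad}^*\widetilde{\p_{\Lambda}}(y)\oplus\g_{-T}=\widetilde{\p_{\Lambda}}^*$.

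Next I would check the extra hypothesis of the character part of Lemma \ref{lmAP}. For every $\gamma\in T$, the explicit $t(\gamma)$ listed case by case in subsection \ref{formch} lies in $\mathbb NS$, and $\gamma+t(\gamma)=\varpi_s+\varpi_{n-s}\neq0$. This gives the upper bound of Lemma \ref{caracbornesup}.

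The lower bound coming from $\mathscr B\subset Y\bigl(\widetilde{\p_{\Lambda}}\bigr)$ has $\lvert E(\pi')\rvert=n-s$ factors, each of weight $\delta_{O_\alpha}=\varpi_s+\varpi_{n-s}$ by Eq. \ref{weightclassique}. So the two bounds coincide, which is Lemma \ref{caracequal}.

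The final part of Lemma \ref{lmAP} (Eq. \ref{WS}) then says that restriction to $y+\g_{-T}$ is an algebra isomorphism. So $y+\g_{-T}$ is a Weierstrass section and $Y\bigl(\widetilde{\p_{\Lambda}}\bigr)$ is polynomial in $\dim\g_{-T}=n-s$ generators. From the character equality, each generator has weight $\varpi_s+\varpi_{n-s}$, and by \cite[Remark 3.7.3]{F01} its degree is $\partial_\gamma=1+\lvert t(\gamma)\rvert$.

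For (ii), assume $3s\neq n$. Lemma \ref{casdiff} gives $\widetilde\p_{\Lambda}=\widetilde{\p_{\Lambda}}$. Then Eq. \ref{semitrunc} gives $Sy\bigl(\widetilde\p\bigr)=Y\bigl(\widetilde\p_{\Lambda}\bigr)=Y\bigl(\widetilde{\p_{\Lambda}}\bigr)$, and (i) applies.

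The only delicate point is the hypothesis $t(\gamma)\in\mathbb NS$ for every $\gamma\in T$. The expressions in subsection \ref{formch} contain terms such as $-\beta'_i$ and $-\beta''_i$, but these are themselves elements of $S$ with nonnegative coefficients, so the condition holds. I would recheck the edge ranges of the index intervals against the exact membership of $S$ in each case, for instance that $\delta_k\in S$ only for $k\le b-1$ when $a\ge b$. Everything else is direct citation.
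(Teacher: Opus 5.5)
Your proposal is correct and follows the paper's own argument. You get the adapted pair and the complement $\g_{-T}$ from Lemma \ref{lmAP} via Lemma \ref{herenondeg}, match the upper bound from the $t(\gamma)\in\mathbb NS$ of subsection \ref{formch} with the lower bound from $\mathscr B$ (Lemma \ref{caracequal}), and conclude (i) by Eq. \ref{WS} and (ii) by Lemma \ref{casdiff} together with Eq. \ref{semitrunc}.
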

  
  \subsection{The sum of degrees.}\label{sumdeg}
  
  Let $c\bigl(\widetilde{\p_{\Lambda}}\bigr)=\frac{1}{2}(\dim\widetilde{\p_{\Lambda}}+{\rm index}\,\widetilde{\p_{\Lambda}})$. It is always an integer and one can check that
  $$c\bigl(\widetilde{\p_{\Lambda}}\bigr)=\sum_{\gamma\in T}\partial_{\gamma}=\frac{1}{2}(n-s)(n-s+1)+s^2-1.$$
  
  By \cite[thm. 5.7]{JS} the equality $c\bigl(\widetilde{\p_{\Lambda}}\bigr)=\sum_{\gamma\in T}\partial_{\gamma}$  was expected at least for the case $3s\neq n$. Indeed in this case $\widetilde{\p_{\Lambda}}=\widetilde\p_{\Lambda}$ by Lemma \ref{casdiff} and $\widetilde\p_{\Lambda}$ has an adapted pair by Lemma \ref{herenondeg}. Moreover $\sum_{\gamma\in T}\partial_{\gamma}$ is the sum of the degrees of all homogeneous generators of $Y\bigl(\widetilde{\p_{\Lambda}}\bigr)=Y\bigl(\widetilde\p_{\Lambda}\bigr)=Sy\bigl(\widetilde\p_{\Lambda}\bigr)=Sy(\widetilde\p)$ by Eq. \ref{semitrunc}.
  
  \subsection{The case when the three blocks are of the same size.}
  
  When $3s=n$ that is, when the three blocks of the standard Levi factor $\mathfrak r$ of $\p$ are of the same size $s$, it is not clear whether the canonical truncation $\widetilde\p_{\Lambda}$ of the contraction $\widetilde\p$ is equal to the derived subalgebra $\widetilde\p'$ of  $\widetilde\p$ or to the contraction $\widetilde{\p_{\Lambda}}$ of the canonical truncation $\p_{\Lambda}$ of $\p$.
  However we have shown in Proof of Lemma \ref{indexindex2} that, for a generic point $\rm Z_C$, we have (case $a=b$) Eq. \ref{stabeq} :
  $$\mathfrak r_{\rm Z_C}=\mathfrak r'_{\rm Z_C}=(\mathfrak r'\oplus\h_{\Lambda})_{\rm Z_C}.$$
  From this we will deduce the following.
  
  \begin{lm}\label{samesize}
  
  When $3s=n$ we have that $\widetilde\p_{\Lambda}=\widetilde\p'$ and then
  $$Sy\bigl(\widetilde\p\bigr)=Y\bigl(\widetilde\p'\bigr).$$
  
  \end{lm}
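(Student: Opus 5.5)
Since $\dim\h_{\Lambda}=1$, subsection \ref{twoposs} leaves only two possibilities, $\widetilde\p_{\Lambda}=\widetilde{\p_{\Lambda}}$ or $\widetilde\p_{\Lambda}=\widetilde\p'$. The plan is to rule out the first one when $3s=n$. The second assertion then follows at once from Eq. \ref{semitrunc}: $Sy\bigl(\widetilde\p\bigr)=Y\bigl(\widetilde\p_{\Lambda}\bigr)=Y\bigl(\widetilde\p'\bigr)$.

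To rule out $\widetilde\p_{\Lambda}=\widetilde{\p_{\Lambda}}$, I use the description of the canonical truncation (Eq. \ref{deftrunc}) as the intersection of the kernels of the weights of $Sy\bigl(\widetilde\p\bigr)$. Suppose $\widetilde\p_{\Lambda}=\widetilde{\p_{\Lambda}}$. Then $\Lambda\bigl(\widetilde\p\bigr)$ vanishes on $\h_{\Lambda}$, so every semi-invariant is in fact $\h_{\Lambda}$-invariant, and $Y\bigl(\widetilde{\p_{\Lambda}}\bigr)=Sy\bigl(\widetilde\p\bigr)=Y\bigl(\widetilde\p'\bigr)^{\h_{\Lambda}}$.

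The key is to compare transcendence degrees via Eq. \ref{GKdimindex}. On one side, ${\rm GKdim}\,Sy\bigl(\widetilde\p\bigr)={\rm index}\,\widetilde\p_{\Lambda}$, which equals ${\rm index}\,\widetilde{\p_{\Lambda}}=n-s=2s$ under our hypothesis, by Lemma \ref{indexindex2}. On the other side, I need an estimate of ${\rm GKdim}\,Sy\bigl(\widetilde\p\bigr)$ by a different route. I would use the semi-direct product structure and the equality of stabilizers Eq. \ref{stabeq}: $\mathfrak r_{\rm Z_C}=\mathfrak r'_{\rm Z_C}$. By Ra\"is' formula applied to $\widetilde\p=\mathfrak r\ltimes\m^a$ itself, this gives ${\rm index}\,\widetilde\p={\rm index}\,\mathfrak r_{\gamma}+\dim\m-\dim\mathfrak r+\dim\mathfrak r_\gamma=2s-1$. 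This is ${\rm index}\,\widetilde\p'$ minus two, consistent with $\dim\widetilde\p=\dim\widetilde\p'+2$. In particular, $\h^{\pi\setminus\pi'}$ acts generically with no stabilizer on the generic coadjoint orbits of $\widetilde\p'$.

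The main step is then to show that a generic $\widetilde P'$-orbit in $\widetilde\p'^*$ is not stable under the torus $\exp\h_{\Lambda}$. Equivalently, I want some invariant in $Y\bigl(\widetilde\p'\bigr)$ with nonzero $\h_{\Lambda}$-weight; this would contradict the vanishing of $\Lambda\bigl(\widetilde\p\bigr)$ on $\h_{\Lambda}$. Concretely, I would use the dimension formula from \cite[Lem. 2.6.4, Eq. (42)]{F01} as in Lemma \ref{casdiff}: $\dim\widetilde\p_{\Lambda}+{\rm index}\,\widetilde\p_{\Lambda}$ is constant. Under the hypothesis, this gives ${\rm index}\,\widetilde\p_{\Lambda}=2s$. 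Under the alternative, it gives ${\rm index}\,\widetilde\p'=2s+1$, which is what Lemma \ref{indexindex1} produces. So the dimension count alone does not decide. The extra input must come from Eq. \ref{stabeq}. Since $\h_{\Lambda}\not\subset\mathfrak r_{\rm Z_C}$ but $(\mathfrak r'\oplus\h_{\Lambda})_{\rm Z_C}=\mathfrak r'_{\rm Z_C}$, the generic $\h_{\Lambda}$-direction moves $\gamma\in\m^*$ transversally to the $R'$-orbit in a direction detected by the invariants of $S(\m)^{\mathfrak r'}$. I would therefore look for an explicit $\mathfrak r'$-invariant in $S(\m)$ of nonzero $\h_{\Lambda}$-weight. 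My candidate is $\det({\rm Z})$, the determinant of the $\rm Z$-block of $\m^*$ (the $SQ$ part). It is $\mathfrak r'$-invariant, lies in $Y\bigl(\widetilde\p'\bigr)$ because $\m^a$ is abelian, and has weight $s(\varpi_s+\varpi_{n-s})$ up to normalisation. That weight vanishes on $\h_{\Lambda}$, however, so I need a better candidate. For $a=b$, the extra invariants $\det({\rm X})$ and $\det({\rm Y})$ of the square blocks X and Y are $\mathfrak r'$-invariant, with weights $\varpi_{2s}-\varpi_s$-type. Their value on ${\rm H}_{\Lambda}$ is nonzero: it is $\pm s(1+2a/b)\neq0$.

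This exhibits a weight of $Sy\bigl(\widetilde\p\bigr)$ not vanishing on $\h_{\Lambda}$, which forces $\widetilde\p_{\Lambda}\neq\widetilde{\p_{\Lambda}}$, hence $\widetilde\p_{\Lambda}=\widetilde\p'$. The main obstacle is checking that $\det\rm X$ is indeed a semi-invariant for all of $\widetilde\p$, including the $\m^a$ part; I expect this holds because $\m^a$ acts trivially on $S(\m)$ in the contraction. Note that this construction needs $a=b$, which matches the case $3s=n$.
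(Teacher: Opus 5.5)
Your final argument is correct, and it takes a genuinely different route from the paper's.

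\textbf{The paper's route.} The paper argues by contradiction. Assuming $\widetilde\p_{\Lambda}=\widetilde{\p_{\Lambda}}$, it takes generators of the polynomial algebra $Y\bigl(\widetilde{\p_{\Lambda}}\bigr)$ from Theorem \ref{thmWS}. It restricts them, following Yakimova, to $\mathfrak r'\oplus\h_{\Lambda}+\gamma$ for a generic $\gamma\in\m^*$. The stabilizer equality Eq. \ref{stabeq} then shows that ${\rm H}_{\Lambda}$ does not occur in them. This gives $Y\bigl(\widetilde{\p_{\Lambda}}\bigr)=Y\bigl(\widetilde\p'\bigr)$, hence equal indices, which contradicts Lemmas \ref{indexindex1} and \ref{indexindex2} ($2s$ versus $2s+1$).

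\textbf{Your route.} You instead exhibit directly a weight in $\Lambda\bigl(\widetilde\p\bigr)$ that does not vanish on $\h_{\Lambda}$. When $n=3s$ the block ${\rm X}$ (dual to $R_1$) is square. Under ${\rm diag}(A_1,B,A_3)\in R$ it becomes $B{\rm X}A_1^{-1}$, so $\det{\rm X}\in S(\m)$ is multiplied by a character of $R$ trivial on $R'$. It is also killed by $\ad\,\m$, since $[\m,\m]_{\widetilde\p}=0$; so your ``main obstacle'' is a one-line check. Its weight is $\sum_{j\le s}\ep_j-\sum_{s<j\le 2s}\ep_j=2\varpi_s-\varpi_{2s}$, with value $3s$ on ${\rm H}_{\Lambda}$; for $\det{\rm Y}$ the weight is $2\varpi_{2s}-\varpi_s$, with value $-3s$. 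Hence ${\rm H}_{\Lambda}\notin\widetilde\p_{\Lambda}$ by Eq. \ref{deftrunc}, and subsection \ref{twoposs} together with Eq. \ref{semitrunc} finishes the proof.

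\textbf{What each buys.} Your argument is elementary and independent of Section \ref{constructAP}, of the index computations, and of the restriction-map machinery. It explains why $3s=n$ is special. It also exhibits a semi-invariant whose weight lies outside $\Bbbk(\varpi_s+\varpi_{n-s})$, so it is not accounted for by the lower bound $\mathscr B$; this is consistent with the final Remark. Moreover, $\det{\rm Z}$ is likewise a semi-invariant of $\widetilde\p$, of weight $\varpi_s+\varpi_{n-s}$. The two weights are linearly independent on the $2$-dimensional centre $\h^{\pi\setminus\pi'}$, so you could even dispense with the dichotomy of subsection \ref{twoposs}. The paper's argument, for its part, stays within the generic-stabilizer framework, which could still apply where no explicit invariant is visible.

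\textbf{Clean-up.} Drop the Ra\"\i s and dimension-count digression; as you note yourself, it decides nothing. Also correct the weight labels: $\det{\rm Z}$ has weight $\varpi_s+\varpi_{n-s}$, not $s(\varpi_s+\varpi_{n-s})$, and the weights of $\det{\rm X}$ and $\det{\rm Y}$ are as given above rather than ``$\varpi_{2s}-\varpi_s$-type''.
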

  
  \begin{proof}
  Recall subsection \ref{twoposs} that there are only two possibilities for $\widetilde\p_{\Lambda}$, namely $\widetilde\p_{\Lambda}=\widetilde\p'$ or $\widetilde\p_{\Lambda}=\widetilde{\p_{\Lambda}}$.
  When $3s=n$ we will show that the last one is not possible.
  
  Assume the contrary, namely that $\widetilde\p_{\Lambda}=\widetilde{\p_{\Lambda}}$. Then by Eq. \ref{semitrunc}, we have that $Sy\bigl(\widetilde\p\bigr)=Y\bigl(\widetilde{\p_{\Lambda}}\bigr)$.
 Let $\gamma\in\m^-\simeq\m^*$ through the Killing form $K$ of $\g$. Recall the subgroup $R$ in Proof of Lemma \ref{indexindex1} and denote by $R_{\rm trunc}$ the subgroup of $R$ such that ${\rm Lie}\bigl(R_{\rm trunc}\bigr)=\mathfrak r'\oplus\h_{\Lambda}=:\mathfrak r_{\rm trunc}$. Let $(R_{\rm trunc})_{\gamma}$ denote the stabilizer of $\gamma$ in $R_{\rm trunc}$. Since $\mathfrak r_{\rm trunc}$ is a reductive Lie algebra,  the dual space of $\widetilde{\p_{\Lambda}}$ identifies with $\mathfrak r_{\rm trunc}\oplus\m^*$. As in \cite[Sec. 2]{Y2} we denote by $\varphi_{\gamma}$ the restriction map defined as follows.
  $$\begin{array}{cllc}
  \varphi_{\gamma}:& Y\bigl(\widetilde{\p_{\Lambda}}\bigr)&\longrightarrow \Bbbk[\mathfrak r_{\rm trunc}+\gamma]^{(R_{\rm trunc})_{\gamma}\ltimes{\rm exp}(\m)}\\
  &f\mapsto& f_{\mid_{\mathfrak r_{\rm trunc}+\gamma}}.\end{array}$$
  Moreover by \cite[Lem. 2.5]{Y1} we have that 
  $$\Bbbk[\mathfrak r_{\rm trunc}+\gamma]^{(R_{\rm trunc})_{\gamma}\ltimes{\rm exp}(\m)}\simeq S\bigl((\mathfrak r_{\rm trunc})_{\gamma}\bigr)^{(R_{\rm trunc})_{\gamma}}$$ as algebras, and by  \cite[Sec. 2]{Y2} if we identify $\mathfrak r_{\rm trunc}+\gamma$ with $\mathfrak r_{\rm trunc}$ then for any $f\in Y\bigl(\widetilde{\p_{\Lambda}}\bigr)$ we have:
  $$\varphi_{\gamma}(f)\in S\bigl((\mathfrak r_{\rm trunc})_{\gamma}\bigr).$$
  
  Recall (i) of Thm. \ref{thmWS}. Set $N=n-s$ and let $\{f_1,\,\ldots,\,f_N\}$ be a set of algebraically homogeneous generators for the polynomial algebra $Y\bigl(\widetilde{\p_{\Lambda}}\bigr)$.
  Denote by $x_1,\,\ldots,\,x_t$ a basis of the vector space $\mathfrak r'$.
  Then for every positive integer $j$, $1\le j\le N$, there exists a finite set $I_j$ such that
  
  \begin{equation}f_j=\sum_{\vec i=(i_1,\,\ldots,\,i_t,\,i)\in I_j}a_{\vec i,\, j}x_1^{i_1}\cdots x_t^{i_t}{\rm H}_{\Lambda}^i g_{\vec i,\,j}\label{expf}\end{equation}
  where $a_{\vec i,\, j}\in\Bbbk\setminus\{0\}$ and $g_{\vec i,\, j}\in S(\m)\setminus\{0\}=\Bbbk[\m^*]\setminus\{0\}$.
  
  Denote by $\m_4$ the set all $\gamma\in\m^-\simeq\m^*$  such that, for all positive integer $j$ such that $1\le j\le N$ and for all $\vec i\in I_j$, we have
  $$g_{\vec i,\,j}(\gamma)\neq 0.$$ 
  
  As a finite intersection of non-empty Zariski open subsets of $\m^-$, the set $\m_4$ is a non-empty open subset of $\m^-$. With the notation in Proof of Lemma \ref{indexindex1}, the subset $\m_1\cap \m_4$ is also a non-empty open subset of $\m^-$ and there exists an element $\gamma\in\m_1\cap\m_4$ which  is $R$-conjugate to some element $\rm Z_C$, for ${\rm C}\in\mathcal C$.
  From Eq. \ref{stabeq}, we deduce then that
  $$(\mathfrak r_{\rm trunc})_{\gamma}=\mathfrak r'_{\gamma}.$$
  
  It follows that, for every positive integer $j$ such that $1\le j\le N$, we have
  $$\varphi_{\gamma}(f_j)\in S\bigl(\mathfrak r'_{\gamma}\bigr).$$
 Observe that $\varphi_{\gamma}(f_j)$ consists in sending every $g_{\vec i,\,j}$ in the expansion of $f_j$ (Eq. \ref{expf}) to $g_{\vec i,\,j}(\gamma)$, which is nonzero by our choice of $\gamma\in\m_4$.
  
 In other words one has $$\varphi_{\gamma}(f_j)=\sum_{\vec i=(i_1,\,\ldots,\,i_t,\,i)\in I_j}a_{\vec i,\, j}x_1^{i_1}\cdots x_t^{i_t}{\rm H}_{\Lambda}^i g_{\vec i,\,j}(\gamma)\in S\bigl(\mathfrak r'_{\gamma}\bigr).$$
  It follows that actually $i=0$ in Eq. \ref{expf} and then
   $f_j\in S\bigl(\widetilde\p'\bigr)$, which means that we have $Y\bigl(\widetilde{\p_{\Lambda}})\subset S\bigl(\widetilde\p'\bigr)$.
   
   We deduce that
   $$Sy\bigl(\widetilde\p\bigr)=Y\bigl(\widetilde{\p_{\Lambda}})\subset S\bigl(\widetilde\p'\bigr)^{\widetilde{\p_{\Lambda}}}\subset S\bigl(\widetilde\p'\bigr)^{\widetilde\p'}=Y\bigl(\widetilde\p'\bigr)\subset S\bigl(\widetilde\p\bigr)^{\widetilde\p'}=Sy\bigl(\widetilde\p\bigr).$$
   
   Then $$Sy\bigl(\widetilde\p\bigr)=Y\bigl(\widetilde{\p_{\Lambda}})=Y\bigl(\widetilde\p'\bigr).$$
   
  We can easily check that $\widetilde\p'_{\Lambda}=\widetilde\p'$ (since $\Lambda(\widetilde\p')=\{0\}$). Hence we obtain by Eq. \ref{GKdimindex} that:
   $${\rm index}\,\widetilde\p_{\Lambda}={\rm index}\,\widetilde{\p_{\Lambda}}={\rm index}\,\widetilde\p'.$$
   
   Lemmas \ref{indexindex1} and \ref{indexindex2} give a contradiction. The proof is complete.
  \end{proof}
  \begin{Rq}
  
  We cannot say for the moment whether $Sy\bigl(\widetilde\p\bigr)=Y\bigl(\widetilde\p'\bigr)$ is or not polynomial, in case $3s=n$. Indeed we have only a lower bound for the formal character of $Sy\bigl(\widetilde\p\bigr)$ with $n-s$ variables, whilst the Gelfand-Kirillov dimension of $\widetilde\p'$, which is equal to its index, is equal to $n-s+1$ by Lemma \ref{indexindex1}. In case $s=1$ and $n=3$, $\p$ is the standard Borel subalgebra of $\mathfrak s\mathfrak l_3$ and then $\m=\n^+_{\pi}$ with notation in Section \ref{Not}. We obtain in this case that $Sy\bigl(\widetilde\p\bigr)=Y\bigl(\widetilde\p'\bigr)=S(\m)$ which is polynomial. 
  \end{Rq}




\end{document}